\theoremstyle{plain}
\newtheorem{thm}{Theorem}[section]
\newtheorem{theorem}[thm]{Theorem}
\newtheorem{lemma}[thm]{Lemma}
\newtheorem{fact}[thm]{Fact}
\newtheorem{remark}[thm]{Remark}
\theoremstyle{definition}
\newtheorem*{ass*}{Assumption}
\newtheorem*{assumption*}{Assumption}
\newcommand{\E}{\mathrm{E}}
\newcommand{\Var}{\mathrm{Var}}
\newcommand{\R}{\mathbb{R}}
\def\bs{\beta_{\textsc{l}}}
\def\bS{\beta_{\textsc{r}}}
\def\bc{\beta_{\textnormal{\textsc{cr}}}}
\begin{document}

\title{Cutoff for the Swendsen–Wang dynamics on the complete graph
}

 \author{
 	Antonio Blanca\thanks{Department of Computer Science and Engineering, Pennsylvania State University. Email: ablanca@cse.psu.edu. Research supported in part by NSF CAREER grant 2143762.}
 	\and
 	Zhezheng Song\thanks{Department of Computer Science and Engineering, Pennsylvania State University. Email: zzsong@psu.edu.}  	 	
 }
\date{\today}
            
\maketitle

\thispagestyle{empty}

\begin{abstract}
We study the speed of convergence of the Swendsen--Wang (SW) dynamics for the $q$-state ferromagnetic Potts model on the $n$-vertex complete graph, known as the mean-field model. The SW dynamics was introduced as an attractive alternative to the local Glauber dynamics, often offering faster convergence rates to stationarity in a variety of settings. A series of works have characterized the asymptotic behavior of the speed of convergence of the mean-field SW dynamics for all $q \ge 2$ and all values of the inverse temperature parameter $\beta > 0$. In particular,
it is known that when $\beta > q$ the mixing time of the SW dynamics is $\Theta(\log n)$. 
We strengthen this result by showing that for all $\beta > q$, there exists a constant $c(\beta,q) > 0$ such that the mixing time of the SW dynamics is $c(\beta,q) \log n + \Theta(1)$.
This implies that the mean-field SW dynamics exhibits the cutoff phenomenon in this temperature regime, demonstrating that this Markov chain undergoes a sharp transition from ``far from stationarity'' to ``well-mixed'' within a narrow $\Theta(1)$ time window. 
The presence of cutoff is algorithmically significant, as simulating the chain for fewer steps than its mixing time could lead to highly biased samples.

\end{abstract}

\vfill

\pagebreak

\setcounter{page}{1}

\section{Introduction}

The $q$-state ferromagnetic Potts model is a classical spin system model central in statistical physics, theoretical computer science, and discrete probability. 
Given a finite graph $G = (V,E)$, a set of spins or colors $\mathcal S = \{1,\dots,q\}$, and an edge interaction parameter $\beta > 0$, it is defined as a probability distribution over the configurations in $\Omega = \{1,\dots,q\}^{V}$, which correspond to the spin assignments from $\mathcal S$ to the vertices of $G$.
Formally, the probability of a configuration $\sigma \in \Omega$ is given by:
\begin{equation}
\label{eq:potts}
    \mu(\sigma) = \frac{1}{Z} \cdot e^{\beta M(\sigma)},
\end{equation}
where $M(\sigma)$ denotes the number of edges of  $G$ with the same spin in both of its endpoints in $\sigma$ (i.e., the monochromatic edges of $G$ in $\sigma$), and $Z$ is the normalizing constant or partition function. 
The edge interaction parameter $\beta$ is proportional to the inverse temperature in physical applications; the special case $q = 2$ corresponds to the classical ferromagnetic Ising model.

The algorithmic problem of efficiently generating approximate samples from~
$\mu$ has attracted much attention in theoretical computer science. For the Ising model (i.e., for $q = 2$), seminal results from the 1990s~\cite{JSIsing,RW} provide a polynomial-time approximate sampling algorithm for $\mu$. Additional approximate-sampling algorithms 
have been discovered since then for the ferromagnetic Ising setting~\cite{Garoni,GuoJer},
but all have a large running time of at least $O(n^{10})$ where $n = |V|$. For $q \ge 3$, the sampling problem is computationally hard, specifically \#BIS-hard~\cite{GoldbergJerrum,GSVY}, but efficient approximate sampling algorithms are still possible for many interesting families of graphs; see~\cite{HPR-PTRF,JKP,Carlson,carlson2020efficient,BG23-FOCS-abstract}.

An algorithm with the potential of providing both faster algorithms for the $q=2$ case and a technology for understanding the feasibility boundary of approximate sampling for $q \ge 3$ is the \emph{Swendsen--Wang (SW) dynamics}~\cite{SW}.
This sophisticated Markov chain exploits the connection between the ferromagnetic Potts model and the random-cluster model~\cite{ES} and was designed to bypass some of the key difficulties associated with sampling from $\mu$ at low temperatures (large $\beta$). 
In particular, 
the SW dynamics
is conjectured to converge to $\mu$ in $O(n^{1/4})$ steps when $q=2$ on any graph $G$ for any $\beta > 0$. This Markov chain was also recently used to the expand the graph families for which approximately sampling can be efficiently done for $q \ge 3$ and $\beta$ large~\cite{BG23-FOCS-abstract}.

The SW dynamics transitions from a configuration $\sigma_t\in \Omega$ to $\sigma_{t+1}\in \Omega$ as follows:
\begin{enumerate}
    \item Independently, for every $e = \{u,v\}\in E$ if $\sigma_t(u) = \sigma_t(v)$ include $e$ in $M_t$ with probability~$p = 1 - e^{-\beta}$;
    \item Independently, for every connected component $\mathcal C$ in $(V,M_t)$, draw a spin $s \in \{1,\dots,q\}$ uniformly at random, and set $\sigma_{t+1}(v)= s$ for all $v\in \mathcal C$. 
\end{enumerate}
The SW dynamics is ergodic and reversible with respect to $\mu$ and thus converges to it~\cite{SW,ES}. We refer to Step 1 as the percolation step and to Step 2 as the coloring step. 

The SW dynamics is a highly non-local Markov chain, updating and potentially changing the spin of each vertex in each step, making its analysis quite difficult in most settings. Still, 
significant progress has been made in understanding the speed of convergence of this dynamics in various geometries, including the complete graph~\cite{CF,GoJe,LNNP,galanis2019swendsen,GLP},
finite subsets of the $d$-dimensional integer lattice graph ${\mathbb{Z}}^d$~\cite{Ullrich-random-cluster,BSz2,BCSV,NamSly,BCPSV},
trees~\cite{Huber,BZSV-SW-trees}, 
random graphs~\cite{BG20,PottsRGMetastabilityCMP,BG22},
locally-tree-like graphs~\cite{BG23-FOCS-abstract},
graphs with sub-exponential growth~\cite{BG23-FOCS-abstract},
graphs of bounded~\cite{BCV20,BCCPSV} or unbounded degree~\cite{BZ},
as well as general graphs when $q=2$~\cite{GuoJer}. A setting that has attracted particular interest and that will be the focus of this work is the complete graph, also known as the \emph{mean-field model}.

On the complete graph, it is natural to re-parametrize the model and use $\beta/n$ instead of $\beta$; i.e., $\mu (\sigma) \propto \exp(\frac{\beta}{n} \cdot M(\sigma))$.
A detailed connection between the phase transition of the mean-field model, which we describe in detail in Remark~\ref{rmk:pt}, and 
the \emph{mixing time}
of the SW dynamics emerged from~\cite{LNNP,galanis2019swendsen,GLP}. 
The {mixing time} of a Markov chain 
is defined as the number of steps until the Markov chain is close in total variation distance to
its stationary distribution starting from the worst possible initial configuration, and it is the most standard notion of speed of convergence to stationarity. Formally, if $P$ is the transition matrix of the chain,
for $\varepsilon \in (0,1)$, 
\[
\tau_{\rm mix}(\varepsilon) = \max_{\sigma \in \Omega }\min_{t \ge 0} \{ \|P^t(\sigma,\cdot) - \mu(\cdot)\|_{\textsc{tv}} \le \varepsilon  \},
\]
and by convention $\tau_{\rm mix} := \tau_{\rm mix}(1/4)$.

When $q \ge 3$, the mixing time $\tau_{\rm mix}^{\mathrm{SW}}$ of the SW dynamics satisfies:
\begin{equation}
\label{eq:mt}
	\tau_{\rm mix}^{\mathrm{SW}} = 
	\begin{cases}
	\Theta(1) 			& \textrm{if}~\beta  < \bs; \\
	
	\Theta(n^{1/3})    			& \textrm{if}~\beta = \bs; \\
	
	e^{\Omega({n})}    & \textrm{if}~\beta \in (\bs,\bS); \\

    \Theta(\log n) & \textrm{if}~\beta \ge \bS;
	\end{cases}	
\end{equation}
%the interval $(\bs,\bS)$ is called the \emph{metastability window}; 
see~\cite{galanis2019swendsen,GLP}.
It is known that $\bS = q$, but $\bs$ does not have a closed form, and it is given instead implicitly as the root of a certain equation~\cite{galanis2019swendsen,LL}. 
%We note that $\bc(q) \in (\bs,\bS)$ 
When $q =2$, $\bs = \bS = q$ and there is no slow-mixing window; in particular,
the mixing time of the mean-field SW dynamics for $q=2$ is $\Theta(1)$ for $\beta < q$, $\Theta(n^{1/4})$ at the critical point $\beta = q$, and $\Theta(\log n)$ for $\beta > q$~\cite{LNNP}.

In this work, we provide sharper results for the mixing time of the mean-field SW dynamics in the $\beta > \bS=q$ regime, further refining our understanding of this dynamics. We show that there exists a constant $c(\beta,q) > 0$ such that $\tau_{\rm mix}^{\mathrm{SW}} = c(\beta,q) \log n + \Theta(1)$,
which implies that this Markov chain exhibits \emph{cutoff}. A Markov chain exhibits cutoff
if there is a sharp drop in the total variation distance from close to $1$ to close $0$ in an interval of time of smaller order than $\tau_{\rm mix}$ which is called the \emph{cutoff window}; i.e., we say that there is cutoff if as $n \to \infty$, for any fixed $\varepsilon \in (0,1)$,
\[
\tau_{\rm mix}(\varepsilon) - \tau_{\rm mix}(1-\varepsilon) = o(\tau_{\rm mix}).
\]
We can now state our main result.

\begin{theorem}
\label{thm:main:intro}
  Let $q \ge 2$ and $\beta > \bS=q$. Then, there exists a constant
  $c(\beta,q) > 0$
  such that SW dynamics for the 
  $q$-state mean-field ferromagnetic Potts model exhibits cutoff at mixing time
 $\tau_{\rm mix}^{\mathrm{SW}} = c(\beta,q) \log n$ with cutoff window $\Theta(1)$; that is, $\tau_{\rm mix}^{\mathrm{SW}} = c(\beta,q) \log n + \Theta(1)$.
\end{theorem}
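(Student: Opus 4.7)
My plan is to reduce the problem to a stochastic recursion for the \emph{magnetization vector} of the chain and then to prove cutoff by combining a deterministic-drift contraction phase with a Gaussian-fluctuation coupling phase.

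\emph{Setup and reduction.} Let $\alpha_t \in \mathbb{R}^q$ denote the magnetization vector of $\sigma_t$, whose $i$-th coordinate is the fraction of vertices with color $i$. For $\beta > q$, the measure $\mu$ concentrates on configurations where one color has fraction $s^\ast = s^\ast(\beta,q) > 1/q$ and the other $q-1$ colors share the remainder equally; by symmetry there are $q$ such ``ordered'' wells. Results in \cite{galanis2019swendsen,GLP} already show that after an $O(1)$ burn-in the SW chain enters one of these wells and remains there for super-polynomial time, and that, conditional on $\alpha_t$, the spin configuration $\sigma_t$ is uniform on the fiber of $\alpha_t$. Mixing of $\sigma_t$ inside a fixed well is therefore equivalent to mixing of $\alpha_t$, and I will work with the centered variable $x_t = \alpha_t - \alpha^\ast$, where $\alpha^\ast = (s^\ast, \frac{1-s^\ast}{q-1}, \dots, \frac{1-s^\ast}{q-1})$.

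\emph{Linear recursion and the cutoff constant.} When $\alpha_t$ is near $\alpha^\ast$, the percolation step decouples into $q$ independent Erd\H{o}s--R\'enyi subgraphs on the color classes. Since $\beta s^\ast > 1$ and $\beta \cdot \frac{1-s^\ast}{q-1} < 1$, only the dominant class has a giant component, of relative size $\theta(\beta s^\ast) + O(n^{-1/2})$, with $\theta$ the Erd\H{o}s--R\'enyi giant-component function; all other color classes produce only $O(\log n)$-size components. The coloring step then re-colors the giant uniformly at random, and a Taylor expansion around $\alpha^\ast$ yields
\begin{equation*}
x_{t+1} = A\, x_t + \xi_{t+1} + O(\|x_t\|^2),
\end{equation*}
where $A = A(\beta,q)$ is an explicit linear map on the zero-sum hyperplane and $\xi_{t+1}$ is mean-zero noise with covariance $\Theta(1/n)$, compounded from binomial percolation fluctuations and multinomial coloring fluctuations. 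The cutoff constant will be
\begin{equation*}
c(\beta,q) \;=\; \frac{1}{2 \log \rho^{-1}}, \qquad \rho = \rho(\beta,q) := \|A\|_{\rm op} \in (0,1),
\end{equation*}
where $\rho < 1$ is exactly the linearized stability of $\alpha^\ast$ under the deterministic SW map, which holds throughout $\beta > \bS = q$ via the stability analysis already present in \cite{galanis2019swendsen,GLP}.

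\emph{Upper and lower bounds.} For the upper bound I will build a two-stage coupling. In the drift stage, two copies of the chain, each of which has entered the dominant well after its $O(1)$ burn-in, are evolved jointly; iterating the recursion together with standard concentration for $\xi_{t+1}$ gives $\|x_t - x_t'\| = O(\rho^t)$ with high probability, so after $c(\beta,q)\log n + k$ steps the two magnetization vectors differ by at most $\rho^k / \sqrt{n}$. In the fluctuation stage, a local CLT for the conditional law of $\alpha_{t+1}$ given $\alpha_t$ (uniform in $\alpha_t$ within an $O(1/\sqrt{n})$-window of $\alpha^\ast$) produces a one-step optimal coupling that succeeds with probability $1 - O(\sqrt{n}\,\|x_t - x_t'\|)$, so $O(1)$ additional steps finish the coupling once $k$ is a large constant depending on $\varepsilon$. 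For the matching lower bound I start from the all-color-$1$ configuration, for which $x_0 \ne 0$, so $\E\, x_t \asymp A^t x_0$ has norm $\asymp \rho^t$; projecting along the top left eigenvector $v$ of $A$ and comparing $\langle v, \alpha_t \rangle$ to its $\Theta(1/\sqrt{n})$-scale fluctuations under $\mu$ separates $P^t(\sigma_0, \cdot)$ from $\mu$ in total variation by at least $1 - e^{-\Omega(\rho^{-2k})}$ at time $t = c(\beta,q) \log n - k$.

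\emph{Main obstacle.} The hardest ingredient will be the local central limit theorem for the one-step update with the necessary uniformity. The noise $\xi_{t+1}$ arises from (i) binomial fluctuations of the giant Erd\H{o}s--R\'enyi component in the dominant class, for which Pittel--Wormald--Stepanov-type CLTs are available but must be made effective at the local-density scale, and (ii) a multinomial re-coloring of that giant together with the many small components. What is required is that the density of $\sqrt{n}\,(\alpha_{t+1} - F(\alpha_t))$, where $F$ is the deterministic map, converges to a nondegenerate Gaussian density uniformly as $\alpha_t$ ranges over an $O(1/\sqrt{n})$-window of $\alpha^\ast$. Threading this estimate through the upper-bound coupling and the lower-bound distinguishing statistic so that they share the \emph{same} constant $c(\beta,q)$ with matching $\Theta(1)$ cutoff windows is where the bulk of the technical effort will lie.
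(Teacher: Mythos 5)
Your skeleton (scalar drift function, contraction rate given by the derivative at the fixed point, constant $(2\log \rho^{-1})^{-1}$, concentration at scale $1/\sqrt{n}$, and a distinguishing statistic for the lower bound) matches the paper's, but there is a genuine gap in your reduction. The claim that ``conditional on $\alpha_t$, the spin configuration $\sigma_t$ is uniform on the fiber of $\alpha_t$'' is false for an arbitrary (worst-case) starting configuration: the law of $X_t$ is invariant only under vertex permutations that preserve the color classes of $X_0$, not under all permutations, so two copies whose magnetization vectors have been coupled need not be close in total variation as configurations. This is exactly why the paper does not work with the $q$-dimensional proportions chain alone but with a $q\times q$ projection $A_{ij,t}$ counting, within each class $V_i$ of a partition induced by the configuration after burn-in, the vertices of each color $j$; only for this projection does the mean-field symmetry give $\|X_T-Y_T\|_{\textsc{tv}}=\|A_T-A_T'\|_{\textsc{tv}}$. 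Making this work at low temperature requires an extra ingredient your plan has no analogue of: one must show (the paper's Lemma~\ref{lemma:partition:convergence}) that by time $\approx \frac{\log n}{2\log(q/(q-1))} < c(\beta,q)\log n$ the fraction of dominant-spin vertices inside \emph{each} $V_i$ coalesces to the global fraction up to $O(1/\sqrt{n})$, since the supercritical giant component straddles all the $V_i$'s; without this, no one-step coupling can equalize the full $q\times q$ count matrix, and coupling only $\alpha_t$ does not bound the mixing time.

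A second, more technical problem is your drift stage: with independent (or naively shared) randomness the difference of the two magnetization vectors does not contract to $\rho^k/\sqrt{n}$ --- each step injects fresh noise of order $1/\sqrt{n}$, so $\|x_t-x_t'\|$ has a noise floor $\Theta(1/\sqrt{n})$ no matter how large $k$ is. Consequently your local-CLT one-step coupling succeeds only with probability $1-O(1)$, i.e.\ constant probability, not probability $1-\varepsilon$, and you still need a boosting argument showing that the $O(1/\sqrt{n})$-proximity conditions persist for $O(1)$ further steps so the coupling can be retried without inflating the $\Theta(1)$ window (the paper's Fact~\ref{fact:step:preserv} and Lemma~\ref{lemma:boosting}). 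Relatedly, the effective local CLT for the giant-component fluctuations that you flag as the main obstacle can be avoided entirely: after matching the color of the giant components, one can couple the recoloring of the $\Omega(n)$ isolated vertices in each $V_i$ via a multinomial coupling (the paper's Fact~\ref{fact:multinomial} and Lemma~\ref{lem:basket_coupling}), which is what makes the one-step coalescence succeed with probability $\Omega(1)$ under the $O(1/\sqrt{n})$ hypotheses.
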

\noindent
We provide the expression for the constant $c(\beta,q)$ in~\eqref{eq:const}. 

The only previously known mixing time cutoff result for the SW dynamics 
was on the integer lattice graph for sufficiently small enough $\beta$~\cite{NamSly}; ours is the first such result in the low-temperature SW dynamics.
For the local Glauber dynamics, which updates a single randomly chosen vertex in each step, cutoff is known in the mean-field through the high-temperature $\beta < \bs$ regime~\cite{levin2010glauber,cuff2012glauber}. For the special case of the Ising model,
cutoff has also been established for the Glauber dynamics for several graph families; see~\cite{lubetzky2013cutoff,lubetzky2017universality,kim2021cutoff,van2021glauber}.

Recall from~\eqref{eq:mt} that the mixing time of the SW dynamics is $\Theta(1)$ when $\beta < \bs$ and exponentially slow when $\beta \in (\bs,\bS)$, so the question of whether the chain exhibits cutoff is of limited significance in those parameter regimes. 
It is an interesting open question, however, whether the SW dynamics exhibits cutoff at the dynamical critical points $\beta=\bs$ or $\beta=\bS$. In the former, mixing is not entirely governed by a ``strong drift'' towards a typical spin count distribution and relies on the variance of the dynamics as well; as such, it is unlikely that the dynamics exhibits cutoff at $\beta = \bs$. For $\beta = \bS$, the SW dynamics exhibits a first-order drift towards typical configurations, but the percolation step is critical, which significantly complicates even the asymptotic mixing time analysis. 

From an algorithmic perspective, for a Markov chain like the SW dynamics 
that exhibits cutoff at $c(\beta,q) \log n$, knowing only the asymptotic mixing time, e.g., that $\tau_{\rm mix}^{\mathrm{SW}} = \Theta(\log n)$, is insufficient in practice, since simulating the dynamics for say $\frac{c(\beta,q)}{2} \log n$ steps, is guaranteed to result in a sample with total variation distance close to $1$.

We conclude this introduction with some brief remarks about our proof and techniques. We construct a multi-phase coupling that converges to a pair of configurations with roughly the correct (expected) number of vertices in each spin class. (In the mean-field 
setting
when $\beta > \bS$, under the stationary distribution $\mu$, there is one dominant spin and all other spins are assigned to roughly the same number of vertices.) 
Our coupling gradually contracts the distance to this type of configuration over two phases: first to a linear distance and then to within $O(\sqrt{n})$ distance. The second phase determines the leading order of the mixing time. Earlier analyses (i.e.,~\cite{galanis2019swendsen}) yield the  $O(\log n)$ mixing time bound but treat step deviations ``pessimistically'' and cannot provide the sharper bound we derive here. 
Our analysis carefully accounts for how deviations are amortized over time and account for the fact that early fluctuations do not matter as much. 

An innovation of the present argument is the use of a $q \times q$ projection to a partition $\{V_1,\dots,V_q\}$ of $V$ at \emph{low temperatures}. This projection contains 
information about the number of vertices assigned each spin in each $V_i$ and its convergence is known to imply that of the SW dynamics due to the symmetry of the mean-field model. This idea
was used before in~\cite{cuff2012glauber,LNNP,galanis2019swendsen} in the high-temperature $\beta < \bs$ setting where the percolation steps of the SW dynamics result in sub-critical random (sub)graphs with trivial connected component structures. In the $\beta > \bS$ regime, we must account for the existence of a linear-sized connected component and how it is distributed in the partition $\{V_1,\dots,V_q\}$. To address this, we have an additional step in our analysis in which we argue that the
fraction of vertices assigned each spin in each $V_i$ coalesces to the overall fraction of vertices with that spin in the full configuration before the full configuration reaches a typical spin count (i.e., before the end of second phase of the coupling). This additional guarantee allows us to design the required coupling.

\begin{remark}
    \label{rmk:pt}
As mentioned earlier, the mixing behavior of the SW dynamics described in~\eqref{eq:mt} is tightly connected to the order-disorder phase transition of the mean-field model which we describe next for completeness. This phase transition
occurs at the {critical} value $\beta = \bc(q)$, where $\bc(q)=q$ when $q = 2$
and 
$$
\bc(q)=2\Big(\frac{q-1}{q-2}\Big)\log(q-1)
$$ 
for $q \ge 3$. When $\beta < \bc(q)$, the number of vertices assigned each spin is roughly the same with high probability up to lower-order fluctuations. In contrast, when
$\beta>\bc(q)$ there is a dominant spin in the configuration with high probability.
For $q \ge 3$, the model exhibits \emph{phase coexistence} at the critical threshold $\beta = \bc(q)$; this means that at this point, the set of configurations with no dominant spin and the 
set of configurations with a dominant spin (with all other non-dominant spins assigned to roughly the same number of vertices), contribute each a constant fraction of the probability mass with all other configurations contributing 0 mass (in the limit as $n\rightarrow\infty$). The effects of the phase coexistence phenomenon extends to the window $(\bs,\bS)$ around $\bc(q)$; this window is known as the metastability window and coincides with slow-mixing regime for the SW dynamics. For $q = 2$, on the other hand, there is no phase coexistence at $\bc(q)$ or metastability, so there is no slow mixing.
\end{remark}

\section{Preliminaries}

In this section, we gather a number of standard definitions and results that we will use in our proofs.

\subsection{Couplings}

A one step coupling of the Markov chain 
with state space $\Omega$
specifies
for every pair of states $(X_t, Y_t)$ a probability distribution over $(X_{t+1}, Y_{t+1})$ such that the processes $\{X_t\}$ and $\{Y_t\}$ are each faithful realizations of the chain, and if $X_t=Y_t$ then $X_{t+1}=Y_{t+1}.$ The {\it coupling time} 
is defined by
\[T_{\mathrm{coup}} = \max_{x,y \in\Omega}\min\limits_t \{X_t=Y_t \mid X_0=x,Y_0=y\}.\]
It is a standard fact that if $\Pr(T_{\mathrm{coup}} > t) \le \varepsilon$, then
$\tau_{\rm mix}(\varepsilon) \le t$; see~\cite{LPW}.
In addition, to bound $\tau_{\rm mix} = \tau_{\rm mix}(1/4)$, it suffices to bound $\Pr(T_{\mathrm{coup}} > t)$ by any constant $\varepsilon < 1$, as 
the bound can then be boosted by repeating the coupling a constant number of times.
For establishing cutoff,
however, this approach is not feasible, as it would increase the mixing time bound by a multiplicative factor. Throughout our proofs, we track the probability of success of each phase of our multi-phase coupling so that $\Pr(T_{\mathrm{coup}} > t) \le 1/4$.

\subsection{Random graph lemmas}

Let $\mathcal G$ be a random graph distributed as a 
$\mathcal G(n,\lambda/n)$ random graph with $\lambda > 0$.
Let $L_i(\mathcal G)$ denote the size of the $i$-th largest component of $\mathcal G$ (breaking ties arbitrarily) and let $C(v)$ denote the connected component of vertex $v$ in $\mathcal{G}$.

\begin{lemma}[Lemma 5.7~\cite{LNNP}]
\label{lemma:rg:isolated}
Let $I$ be the number of isolated vertices in $\mathcal G$.
For any constant $\lambda > 0$, there exists a constant $C > 0$ that
$\Pr(I \ge C n) = 1 - O(n^{-1})$.
\end{lemma}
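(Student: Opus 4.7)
The plan is to prove the lemma by a standard second-moment (Chebyshev) argument applied to the random variable $I$, the number of isolated vertices of $\mathcal{G} \sim \mathcal{G}(n,\lambda/n)$. Writing $I = \sum_{v \in V}\mathbf{1}\{v~\text{isolated}\}$, linearity of expectation and the observation that $v$ is isolated iff all $n-1$ potential edges incident to $v$ are absent give
\[
\E[I] = n(1-\lambda/n)^{n-1} = (1+o(1))\, n e^{-\lambda} = \Theta(n),
\]
so the target $Cn$ in the lemma statement is a constant fraction of $\E[I]$ provided $C < e^{-\lambda}$.

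Next I would compute the second moment. For a distinct pair $u \ne v$, both are isolated precisely when all $2(n-2)+1 = 2n-3$ distinct potential edges touching $\{u,v\}$ are absent, so
\[
\E[I^2] = \E[I] + n(n-1)(1-\lambda/n)^{2n-3}.
\]
The key algebraic point is that the $\Theta(n^2)$ contributions in $\E[I^2]$ and in $\E[I]^2 = n^2(1-\lambda/n)^{2(n-1)}$ differ only by the factor $(n-1)/(n(1-\lambda/n)) = 1 + O(1/n)$, so their difference is $O(n)$. Together with the diagonal contribution $\E[I]$, this yields $\Var(I) = O(n)$.

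Finally, Chebyshev's inequality with threshold $t = \E[I]/2 = \Theta(n)$ gives
\[
\Pr\bigl(I < \E[I]/2\bigr) \le \frac{\Var(I)}{(\E[I]/2)^2} = \frac{O(n)}{\Theta(n^2)} = O(n^{-1}),
\]
so taking $C := e^{-\lambda}/3$ (say) ensures $Cn < \E[I]/2$ for $n$ large and yields the claim. The only delicate step is the variance computation: a naive union/product bound gives only $\Var(I) = O(n^2)$, which via Chebyshev yields a trivial $O(1)$ failure probability. The sharper $O(n^{-1})$ bound required here hinges on the near-independence of the isolation indicators for distinct pairs (they differ from full independence only through the absence of the single edge $\{u,v\}$), which is exactly what produces the cancellation down to $\Var(I) = O(n)$. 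Apart from this bookkeeping, there is no real obstacle.
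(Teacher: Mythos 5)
Your proof is correct. The paper itself does not prove this statement; it simply imports it as Lemma 5.7 of the cited reference [LNNP], so your argument serves as a self-contained elementary substitute. The key computation checks out: with $\E[I]=n(1-\lambda/n)^{n-1}=\Theta(n)$ and
\[
\E[I^2]-\E[I] = n(n-1)(1-\lambda/n)^{2n-3}
= n^2(1-\lambda/n)^{2n-2}\cdot\frac{n-1}{n-\lambda},
\]
the off-diagonal term exceeds $\E[I]^2$ by exactly $n(1-\lambda/n)^{2n-3}(\lambda-1)=O(n)$, so $\Var(I)=O(n)$ and Chebyshev at threshold $\E[I]/2$ gives failure probability $O(n^{-1})$, which is precisely the guarantee in the lemma; choosing any constant $C<e^{-\lambda}/2$ then works for $n$ large. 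You are also right that the naive bound $\Var(I)=O(n^2)$ would be useless here, and your identification of the near-independence of the isolation indicators as the source of the cancellation is the correct point of care. One could instead get stronger (exponential) concentration via more sophisticated tools, but the $O(n^{-1})$ rate demanded by the statement is exactly what the second-moment method delivers, so nothing more is needed.
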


For $\lambda > 1$ let $\theta(\lambda)$ denote the unique positive solution of the equation
\begin{equation}
    \label{eq:rg:root}
   e^{-\lambda x} = 1-x. 
\end{equation}

\begin{lemma}[Lemma 5.4~\cite{LNNP}]
\label{lemma:rg:gc}
For $\lambda > 1$, there exist constants $c = c(\lambda) > 0$ and $C > 0$ such that for any $A>0$ 
$$
\Pr(|L_1(\mathcal G)-\theta(\lambda)n| \ge A\sqrt{n})\le Ce^{-cA^2}.
$$   
\end{lemma}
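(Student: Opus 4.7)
The plan is to prove this Gaussian tail via the exploration-process machinery for $\mathcal{G}(n,\lambda/n)$, reducing the question about $L_1$ to a concentration statement for a sum of near-independent indicators. Fix a threshold $k_n = K\log n$ with $K = K(\lambda)$ sufficiently large, and let $N_{k_n}$ denote the number of vertices lying in components of size at most $k_n$. A standard uniqueness-of-the-giant argument---enumerate candidate components of size in $[k_n,\eta n]$ via breadth-first exploration and compare to a subcritical branching process---shows that with probability at least $1-Ce^{-cA^2}$ every component other than $L_1$ has size at most $k_n$. On this event $L_1(\mathcal{G})=n-N_{k_n}$, so it suffices to establish $|N_{k_n}-(1-\theta(\lambda))n|\le A\sqrt{n}$ with probability at least $1-Ce^{-cA^2}$.

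For the expectation, I would analyze the breadth-first exploration of $C(v)$ from a fixed vertex $v$ by coupling it with a Galton--Watson tree with $\mathrm{Bin}(n-1,\lambda/n)$ offspring distribution. Its survival probability converges to $\theta(\lambda)$, defined by $e^{-\lambda x}=1-x$, and the error from both the Poisson approximation and the truncation at depth $k_n$ contributes only $o(n^{-1/2})$; summing over $v$ gives $\mathbb{E}[N_{k_n}]=(1-\theta(\lambda))n+o(\sqrt{n})$. For the concentration, I would form a vertex-exposure Doob martingale $M_i=\mathbb{E}[N_{k_n}\mid\mathcal{F}_i]$, where $\mathcal{F}_i$ records the edges incident to the first $i$ vertices. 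A single exposure can alter $N_{k_n}$ by at most the total size of the small components touched at vertex $i$, which is $O(k_n)$ in the worst case but only $O(1)$ on the typical event that the newly merged components are of constant size.

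The main obstacle is upgrading this martingale---which a bare Azuma--Hoeffding application turns into only a $\sqrt{n}\log n$-scale bound---into a sub-Gaussian tail at the $\sqrt{n}$ scale. The cleanest route I would pursue is Warnke's \emph{typical bounded differences} inequality: the bad event at step $i$, namely encountering an unusually large subcritical component when exposing vertex $i$, has probability $e^{-\Omega(\log n)}$ by the exponential decay of subcritical component sizes, while on the good event the martingale increment is $O(1)$. This reduces the effective Azuma variance from $nk_n^2$ down to $O(n)$, producing the advertised $Ce^{-cA^2}$ bound. A heavier but self-contained alternative is to compute $\mathrm{Var}(N_{k_n})$ directly, using exponentially decaying covariances of the indicators $\mathbf{1}\{|C(v)|\le k_n\}$, and then invoke a Laplace-transform / exponential-moment argument to upgrade second-moment concentration into a sub-Gaussian tail.
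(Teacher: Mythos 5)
Your proposal is not a reconstruction of an argument in the paper: the paper simply quotes Lemma~\ref{lemma:rg:gc} from \cite{LNNP} (Lemma 5.4 there) without proof, so you are proving an external input from scratch. The strategy (reduce $L_1$ to $n-N_{k_n}$, estimate $\E[N_{k_n}]$ by branching-process coupling, then concentrate $N_{k_n}$) is reasonable, and your expectation estimate $\E[N_{k_n}]=(1-\theta(\lambda))n+o(\sqrt n)$ is fine. The genuine gap is uniformity in $A$. Both of your discarded events --- the existence of a non-giant component of size at least $k_n=K\log n$, and the bad event in Warnke's typical bounded differences inequality --- have probability that is only polynomially small in $n$ and, crucially, does not decrease with $A$. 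Indeed $\Pr(L_2(\mathcal G)\ge K\log n)$ is genuinely of order $n^{1-\Omega(K)}$ (by supercritical/subcritical duality), so this is not an artifact of a union bound. The lemma, however, asserts a bound $Ce^{-cA^2}$ for \emph{every} $A>0$; for $A\gg\sqrt{\log n}$ (say $A=n^{1/4}$, i.e.\ deviations of order $n^{3/4}$) one has $e^{-cA^2}\ll n^{-\Omega(K)}$, so your decomposition with a fixed cutoff $K\log n$ and an $A$-independent bad event can only deliver the claimed inequality in the range $A=O(\sqrt{\log n})$. This restriction matters for the present paper: in the proof of Lemma~\ref{lemma:main:contract} the concentration statement is invoked with deviation parameters $r_i=K(1-\rho/2)^{t-i}(1+\rho)^{t-i}$, which grow to a polynomial power of $n$, so the regime $\sqrt{\log n}\ll A\le\sqrt n$ is actually used.

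Patching this is not routine within your framework: letting the cutoff scale with the deviation (say $k\asymp A\sqrt n$) makes the worst-case martingale increments of order $k$, and then Azuma or typical-bounded-differences with typicality thresholds large enough to beat $e^{-cA^2}$ gives a variance proxy far exceeding $O(n)$, so the moderate-deviation regime is exactly where the method degrades. The standard way to get the sub-Gaussian tail for all $A$ in one shot is the exploration (random-walk) representation of the component of a vertex, as in \cite{LNNP} or in Bollob\'as--Riordan's random-walk proof of asymptotic normality of the giant: the events $\{L_1\ge\theta(\lambda)n+t\}$ and $\{L_1\le\theta(\lambda)n-t\}$ are rewritten as the exploration walk (with binomial increments) hitting zero atypically late or early, and Hoeffding/Bernstein bounds on the walk give $e^{-ct^2/n}$ directly for all $t\le n$. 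Separately, a local slip: a single vertex exposure can change $N_{k_n}$ by the total size of all small components meeting the exposed vertex, which is $O(\deg(i)\,k_n)$ and can be of order $n$, not $O(k_n)$ as you state; Warnke's inequality can absorb this via the good event, but the worst-case increment you quote is incorrect as written, and in any case the additive $\Pr(\neg\Gamma)$ term returns you to the uniformity problem above.
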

Let $\mathcal{L}_1(\mathcal G)$ denote the largest component of $\mathcal G$ (note that $|\mathcal{L}_1(\mathcal G)| = L_1(\mathcal G)$).
Lemma~\ref{lemma:rg:gc} and Hoeffding's inequality, which also holds when sampling without replacement (see~\cite{Hoeffding}) imply the following.
\begin{lemma}
    \label{lemma:rg:partition}
    Let $U \subseteq V$. For $\lambda > 1$, there exist constants $c = c(\lambda) > 0$ and $C > 0$ such that for any $A>0$ 
$$
\Pr(||{\mathcal L}_1(\mathcal G)\cap U|-\theta(\lambda)|U|| \ge A\sqrt{n})\le Ce^{-cA^2}.
$$
\end{lemma}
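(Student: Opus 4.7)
The plan is to combine Lemma~\ref{lemma:rg:gc} with Hoeffding's inequality for sampling without replacement via a vertex-relabeling symmetry of the Erd\H{o}s--R\'enyi model.

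First, I would control the size of the giant. Let $E_A$ be the event $\bigl\{|L_1(\mathcal G)-\theta(\lambda)n|\le \tfrac{A}{2}\sqrt n\bigr\}$. By Lemma~\ref{lemma:rg:gc}, $\Pr(E_A^c)\le C e^{-cA^2/4}$ for appropriate constants. Second, I would invoke symmetry: the law of $\mathcal G(n,\lambda/n)$ is invariant under relabeling by any $\pi\in S_n$, and we may choose the tie-breaking rule for the largest component so that $\mathcal L_1$ is $S_n$-equivariant (for instance, by breaking ties uniformly at random among components of the same size). Consequently, conditional on $|\mathcal L_1|=L$, the set $\mathcal L_1$ is uniformly distributed over the $L$-subsets of $V$, so $|\mathcal L_1\cap U|$ follows a hypergeometric distribution with parameters $(n,|U|,L)$ and mean $L|U|/n$.

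Third, I would apply Hoeffding's inequality for sampling without replacement, which yields the same sub-Gaussian tail as in the i.i.d.\ setting: conditional on $|\mathcal L_1|=L$,
\[
\Pr\!\left(\bigl||\mathcal L_1\cap U|-L|U|/n\bigr|\ge \tfrac{A}{2}\sqrt n \,\Bigm|\, |\mathcal L_1|=L\right)\le 2\exp\!\left(-\tfrac{A^2 n}{2L}\right)\le 2\exp(-A^2/2),
\]
using $L\le n$. Finally, the triangle inequality closes the argument: on $E_A$,
\[
\bigl|\,L|U|/n-\theta(\lambda)|U|\,\bigr|=\frac{|U|}{n}\,|L-\theta(\lambda)n|\le \frac{|U|}{n}\cdot\frac{A}{2}\sqrt n\le \frac{A}{2}\sqrt n,
\]
so combining this with the Hoeffding bound gives $||\mathcal L_1\cap U|-\theta(\lambda)|U||\le A\sqrt n$. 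A union bound over $E_A^c$ and the Hoeffding failure event yields the stated bound with adjusted constants $C$ and $c$.

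The main obstacle is only nominal: it lies in justifying the symmetry step in the presence of the ``arbitrary'' tie-breaking in the definition of $L_i$. This is handled either by choosing a symmetric tie-breaking rule as above, or by noting that in the supercritical regime $\lambda>1$ the largest component is unique with probability $1-o(1)$ (indeed, $|\mathcal L_2|=O(\log n)$ while $|\mathcal L_1|=\Theta(n)$ on $E_A$), so any exceptional contribution from ties can be absorbed into the $Ce^{-cA^2}$ error term.
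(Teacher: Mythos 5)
Your proposal is correct and follows essentially the same route as the paper, which derives this lemma directly from Lemma~\ref{lemma:rg:gc} together with Hoeffding's inequality for sampling without replacement (applied to the hypergeometric law of $|\mathcal L_1\cap U|$ given $|\mathcal L_1|$, justified by vertex-label symmetry). Your explicit treatment of the tie-breaking/equivariance point is a welcome elaboration of a step the paper leaves implicit, but it is not a different argument.
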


Finally, Theorem 1.1 in~\cite{janson2008susceptibility} and the reflection principle yields the following lemma.

\begin{lemma} \label{lemma:rg:sucep} \
\begin{enumerate} 
    \item If $\lambda < 1$, there exists $\gamma = \gamma(\lambda)$ such that $\E[\sum_{j \ge 1} L_j(\mathcal G)^2] = \gamma n + O(1)$ and $\Var[\sum_{j \ge 1} L_j(\mathcal G)^2] = O(n)$.
    \item If $\lambda > 1$, there exists $\gamma = \gamma(\lambda)$ such that $\E[\sum_{j \ge 2} L_j(\mathcal G)^2] = \gamma n + O(1)$ and $\Var[\sum_{j \ge 2} L_j(\mathcal G)^2] = O(n)$.
\end{enumerate}    
\end{lemma}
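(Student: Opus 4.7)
The plan is to use Theorem 1.1 of~\cite{janson2008susceptibility} as the principal ingredient, with the reflection principle supplying the tail/concentration estimates needed to pass from convergence-in-distribution to the quantitative $O(1)$ and $O(n)$ error terms in the statement.

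For part (1), with $\lambda < 1$, write $S_n := \sum_{j \ge 1} L_j(\mathcal G)^2 = \sum_{v \in V} |C(v)|$. By vertex symmetry, $\E[S_n] = n\,\E[|C(v_0)|]$ for any fixed vertex $v_0$, and the branching-process comparison for subcritical Erd\H{o}s--R\'enyi graphs gives $\E[|C(v_0)|] = \frac{1}{1-\lambda} + O(n^{-1})$, hence $\gamma = \frac{1}{1-\lambda}$ and $\E[S_n] = \gamma n + O(1)$. The variance bound $\Var[S_n] = O(n)$ comes directly from Janson's Theorem~1.1, whose proof expresses $S_n$ in terms of the exploration random walk and controls covariances of component sizes using reflection-principle estimates on the excursion lengths.

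For part (2), with $\lambda > 1$, the target is $S_n' := \sum_{j \ge 2} L_j(\mathcal G)^2 = S_n - L_1(\mathcal G)^2$. The standard route is via duality: conditional on the giant component occupying a set $C$ of size $|C| = \theta(\lambda) n + O(\sqrt n)$ (as guaranteed by Lemma~\ref{lemma:rg:gc}), the induced subgraph on $V \setminus C$ is stochastically close to a $\mathcal G(n-|C|, \lambda/n)$ random graph with effective parameter $\lambda' = \lambda(1 - \theta(\lambda)) < 1$, i.e., subcritical. Applying part~(1) to this dual graph yields $\E[S_n'] = \gamma n + O(1)$ and $\Var[S_n'] = O(n)$ for an explicit $\gamma = \gamma(\lambda)$ depending on $\lambda$ and $\theta(\lambda)$. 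The reflection principle is precisely what is needed to make this reduction rigorous with the claimed error terms: it yields the Gaussian-scale tail bound $\Pr(|L_1 - \theta(\lambda)n| \ge A\sqrt n) \le C e^{-cA^2}$ underlying Lemma~\ref{lemma:rg:gc}, from which one upgrades the conditional duality argument into unconditional moment estimates for $S_n'$.

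The main obstacle I anticipate is not the identification of $\gamma$ or the in-distribution statement, which are immediate from Janson's paper, but rather the passage to moment asymptotics with the stated error terms. A naive bound gives $\Var[L_1^2] = O(n^3)$, so one cannot simply use $\Var[S_n'] \le 2\Var[S_n] + 2\Var[L_1^2]$; the key point is that $S_n - L_1^2$ enjoys a much tighter concentration than either summand individually, reflecting the fact that the bulk of the fluctuation in $S_n$ comes from $L_1$ itself. Establishing this cancellation requires uniform integrability for the susceptibility, which in turn rests on sub-Gaussian tails obtained via the reflection principle applied to the exploration walk; once those tails are in hand, the remaining computations are routine.
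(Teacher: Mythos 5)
Your proposal is correct and takes essentially the same approach as the paper, whose entire proof of this lemma is the one-line appeal to Theorem~1.1 of \cite{janson2008susceptibility} together with the reflection (duality) principle: Janson's theorem supplies the subcritical moment bounds, and the supercritical case is reduced to the subcritical one by removing the giant component and controlling its size fluctuations via the sub-Gaussian bound of Lemma~\ref{lemma:rg:gc}, exactly as you outline. Your write-up is in fact more detailed than the paper's; the one step you leave implicit (that the $\Theta(\sqrt{n})$ fluctuation of $L_1$ shifts the mean of $\sum_{j\ge 2}L_j(\mathcal G)^2$ by only $O(1)$ rather than $O(\sqrt{n})$) is likewise not addressed in the paper.
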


\subsection{Drift function}
\label{subsec:prelim}

Let us define the function $F: [1/q,1] \rightarrow [0,1]$ as
$$
F(x) = \frac{1}{q} + \Big(1-\frac{1}{q}\Big) \theta(\beta x) x,
$$
with $\theta(\beta x)$ given by~\eqref{eq:rg:root}.
This function plays a central role in the analysis of the mixing time of the SW dynamics, as
it captures the expected fraction of vertices in the largest color class after one step of the SW dynamics from a configuration with an $x$ fraction of the vertices in its largest color class. As such, 
we refer to $F$ as the ``drift function''.
Our proofs will use the following facts about $F$; the first two were previously established in~\cite{galanis2019swendsen} and we provide a proof of the next two in Appendix~\ref{appendix:f}.

\begin{fact}[Lemma 4~\cite{galanis2019swendsen}]
    \label{fact:df:fp}
    For $\beta > q$, $F$ has a unique fixed point $a(\beta,q) \in (1/q,1]$ and $|F'(a(\beta,q))| < 1$. 
\end{fact}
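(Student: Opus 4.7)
The plan is to reduce the fixed-point equation $F(x)=x$ to a single transcendental equation in one variable. Setting $y:=\theta(\beta x)$ so that $e^{-\beta xy}=1-y$, the equation $F(x)=x$ rearranges to $(q-1)xy=qx-1$. Introducing the bijection $v:=(qx-1)/(q-1)$ between $x\in(1/q,1]$ and $v\in(0,1]$ gives $xy=v$ and $y=(qx-1)/((q-1)x)$, and substituting back into $e^{-\beta xy}=1-y$ yields
$$
e^{-\beta v}=\frac{1-v}{1+(q-1)v}, \qquad\text{equivalently,}\qquad \beta=\Psi(v):=\frac{1}{v}\log\frac{1+(q-1)v}{1-v}.
$$
Thus fixed points of $F$ in $(1/q,1)$ correspond bijectively to solutions of $\Psi(v)=\beta$ in $(0,1)$.

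Next I would analyze the shape of $\Psi$ on $(0,1)$. Elementary limits give $\Psi(0^+)=q$ and $\Psi(1^-)=+\infty$. Writing $L(v):=\log\frac{1+(q-1)v}{1-v}$ and $D(v):=(1+(q-1)v)(1-v)$, direct computation gives $L'(v)=q/D(v)$ and $\Psi'(v)=h(v)/v^2$ with $h(v):=vL'(v)-L(v)$, so that $h(0)=0$, $h(v)\to+\infty$ as $v\to 1^-$, and $h'(v)=vL''(v)$. Since $L''$ has the same sign as $2(q-1)v-(q-2)$, the function $h$ is first nonincreasing from $0$ (vacuously so when $q=2$) on $(0,v_0]$ with $v_0:=\max(0,(q-2)/(2(q-1)))$, and strictly increasing to $+\infty$ on $[v_0,1)$. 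Hence $h$ has at most one sign change in $(0,1)$, yielding a unique $v^*\in[v_0,1)$ such that $\Psi$ is nonincreasing on $(0,v^*]$ (with values $\le q$) and strictly increasing on $(v^*,1)$ from $\Psi(v^*)\le q$ to $+\infty$. Consequently, for every $\beta>q$ the equation $\Psi(v)=\beta$ has a unique solution $v(\beta)\in(v^*,1)$, yielding the unique fixed point $a(\beta,q)\in(1/q,1)$.

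For the derivative bound, differentiating $e^{-\beta xy}=1-y$ implicitly gives $y'=\beta y(1-y)/(1-\beta x(1-y))$, hence
$$
F'(x)=\frac{(q-1)\,y}{q\,(1-\beta x(1-y))}.
$$
Since $\beta a>1$, the standard giant-component fact $\lambda(1-\theta(\lambda))<1$ for $\lambda>1$ gives $\beta a(1-y)<1$, so $F'(a)>0$. Plugging in the fixed-point identities $1-y=e^{-\beta v(\beta)}=(1-v(\beta))/(1+(q-1)v(\beta))$ and $a=(1+(q-1)v(\beta))/q$, the inequality $F'(a)<1$ simplifies by routine algebra to $\beta<L'(v(\beta))$, i.e., $L(v(\beta))/v(\beta)<L'(v(\beta))$, which is $h(v(\beta))>0$, equivalently $\Psi'(v(\beta))>0$. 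This holds automatically because $v(\beta)\in(v^*,1)$ lies in the strictly increasing portion of $\Psi$. The main obstacle is the unique-critical-point analysis of $\Psi$ (which rests on $L''$ changing sign at most once in $(0,1)$); once that is in place, the condition $|F'(a)|<1$ reduces to exactly the same inequality $\Psi'(v(\beta))>0$ used for uniqueness, so both halves of the statement follow simultaneously.
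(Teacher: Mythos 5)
Your argument is correct, and it is worth noting that the paper does not prove this fact at all: it is imported verbatim as Lemma 4 of the cited work of Galanis, \v{S}tefankovi\v{c} and Vigoda, so your write-up is a self-contained substitute rather than a variant of an in-paper proof. Your route is the classical mean-field one: the change of variables $v=(qx-1)/(q-1)$ turns $F(x)=x$ into the Potts magnetization equation $\beta=\Psi(v)=\frac1v\log\frac{1+(q-1)v}{1-v}$, and the convexity analysis of $L$ (sign of $L''$ governed by $2(q-1)v-(q-2)$, hence $h=vL'-L$ with a single sign change) gives the unimodal shape of $\Psi$ with $\Psi(0^+)=q$, so for $\beta>q$ there is exactly one solution and it lies on the strictly increasing branch. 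I checked the algebra: $L'=q/D$, the implicit differentiation $F'(x)=\frac{(q-1)\theta(\beta x)}{q(1-\beta x(1-\theta(\beta x)))}$ agrees with the expression the paper derives in Appendix~A for Fact~\ref{fact:f:d:exp}, and at the fixed point the inequality $F'(a)<1$ does reduce exactly to $\beta<L'(v(\beta))$, i.e.\ $h(v(\beta))>0$, which is the same condition that placed $v(\beta)$ on the increasing branch — a nice economy, since uniqueness and the derivative bound come from one inequality. Two small points to tidy up: you should say a word about the endpoint $x=1$ (it is not a fixed point since $\theta(\beta)<1$, equivalently $\Psi(1^-)=+\infty$), and the ``standard giant-component fact'' $\lambda(1-\theta(\lambda))<1$ that you invoke for positivity of the denominator deserves a one-line proof (concavity of $\theta\mapsto 1-e^{-\lambda\theta}-\theta$ at its positive root) or a reference; alternatively, positivity also follows directly from the same inequality $\beta(1-v)<q/(1+(q-1)v)$ you already establish, so the external fact can be dispensed with.
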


\begin{fact}[Lemma 8~\cite{galanis2019swendsen}] 
    \label{fact:f:inc}
    For $\beta > 0$, $F'(x) > 0$ and $F''(x) < 0$ for all $x \in (1/\beta,1]$. That is, $F$ is strictly increasing and concave in the interval $(1/\beta,1]$.
\end{fact}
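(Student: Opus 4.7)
The function $F$ decomposes as $F(x) = \frac{1}{q} + \bigl(1 - \frac{1}{q}\bigr) h(x)$ with $h(x) := x\,\theta(\beta x)$, so the two claims reduce to showing $h'(x) > 0$ and $h''(x) < 0$ on $(1/\beta, 1]$. The plan is to extract both inequalities directly from the implicit relation $e^{-\beta x y} = 1 - y$ defining $y := \theta(\beta x)$, rather than trying to control $\theta'$ and $\theta''$ in isolation.

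First, I would differentiate this identity once with respect to $x$ to obtain $y' = \beta y(1-y)/(1-w)$, where $w := \beta x(1-y)$. A short algebraic manipulation then yields $h'(x) = y + xy' = y/(1-w)$. To conclude $h'(x) > 0$ I must check that $0 < w < 1$, which amounts to $\theta(\lambda) > 1 - 1/\lambda$ for $\lambda = \beta x > 1$; this in turn follows by noting that the function $g(z) := e^{-\lambda z} - (1-z)$ satisfies $g(1-1/\lambda) = e^{1-\lambda} - 1/\lambda < 0$, and by the uniqueness of the positive root $\theta(\lambda)$ of $g$, this root must exceed $1 - 1/\lambda$.

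Next, differentiating $h'(x) = y/(1-w)$ once more and substituting the expressions for $y'$ and $w' = \beta(1-y)(1-\beta x)/(1-w)$, I would obtain, after clean cancellation,
\[
h''(x) = \frac{\beta\, y(1-y)\bigl(2 - \beta x(2-y)\bigr)}{(1-w)^3}.
\]
Since the prefactor is positive, the sign of $h''(x)$ is determined by $2 - \beta x(2-y)$. Changing variables to $u := \beta x y$, the defining equation gives $\beta x = u/(1 - e^{-u})$ and $2 - y = 1 + e^{-u}$, so $\beta x(2 - y) = u(e^u+1)/(e^u-1)$, and $h''(x) < 0$ becomes equivalent to
\[
\phi(u) := u(e^u + 1) - 2(e^u - 1) > 0 \qquad \text{for all } u > 0.
\]
This is an elementary one-variable check: $\phi(0) = \phi'(0) = 0$ and $\phi''(u) = u e^u > 0$ on $(0,\infty)$, which forces $\phi$ to be strictly increasing and hence positive on $(0,\infty)$.

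The main obstacle is the algebraic bookkeeping of the second implicit differentiation that produces the compact expression for $h''(x)$; it is routine but mildly tedious, and the key simplification to watch for is that the combination $y'(1-w) + y\,w'$ in the numerator collapses into a tidy multiple of $2 - \beta x(2-y)$. Once that expression is in hand, the $u$-substitution reduces everything remaining to the elementary inequality $\phi(u) > 0$, which avoids having to manipulate $\theta''$ directly.
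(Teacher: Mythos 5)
Your proposal is correct. Note that the paper does not prove this fact at all: it is quoted as Lemma 8 of the cited work of Galanis, \v{S}tefankovi\v{c} and Vigoda, so you have supplied a self-contained argument where the paper relies on a citation. Your route is sound at every step: the implicit differentiation of $e^{-\beta x y}=1-y$ gives $y'=\beta y(1-y)/(1-w)$ and hence $h'(x)=y/(1-w)$ with $w=\beta x(1-y)$; the bound $\theta(\lambda)>1-1/\lambda$ for $\lambda>1$ (via $g(1-1/\lambda)=e^{1-\lambda}-1/\lambda<0$ and uniqueness of the positive root) gives $0<w<1$, which both yields $h'>0$ and, usefully, justifies the implicit differentiation itself since $1-w\neq 0$; and your formula
\[
h''(x)=\frac{\beta\,y(1-y)\bigl(2-\beta x(2-y)\bigr)}{(1-w)^3}
\]
checks out — indeed, after multiplying by $(q-1)/q$ it coincides exactly with the closed form for $F''$ that the paper records in its Appendix~A proof of Fact~\ref{fact:f:d2}, which is a good consistency check on your algebra. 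The substitution $u=\beta x y$, giving $\beta x(2-y)=u(e^u+1)/(e^u-1)$ and reducing concavity to $\phi(u)=u(e^u+1)-2(e^u-1)>0$ via $\phi(0)=\phi'(0)=0$, $\phi''(u)=ue^u>0$, is a clean and fully elementary endgame that avoids handling $\theta''$ directly. No gaps.
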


\begin{fact} 
    \label{fact:f:d2}
    There exists $\Delta(q,\beta) > 0$ such that 
    $|F''(x)| \le \Delta(q,\beta)$ for all $x \in (0,1]$. 
\end{fact}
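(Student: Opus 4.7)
Since $\theta(\lambda)\equiv 0$ for $\lambda\le 1$, the function $F$ is constant on $(0,1/\beta]$, so $F''\equiv 0$ there; it therefore suffices to bound $F''$ on $(1/\beta,1]$ (away from the corner at $x=1/\beta$, where $F$ fails to be $C^1$). Setting $G(\lambda):=\lambda\theta(\lambda)$, a direct computation gives
\[
F(x)=\frac{1}{q}+\frac{1-1/q}{\beta}\,G(\beta x),\qquad F''(x)=(1-1/q)\,\beta\,G''(\beta x),
\]
where $G''(\lambda)=2\theta'(\lambda)+\lambda\theta''(\lambda)$. The task reduces to showing that $\theta'$ and $\theta''$ are uniformly bounded on the compact interval $[1,\beta]$.

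For $\lambda>1$, implicit differentiation of $H(\lambda,\theta):=e^{-\lambda\theta}-1+\theta=0$ yields
\[
\theta'(\lambda)=\frac{\theta(1-\theta)}{1-\lambda(1-\theta)},
\]
with an analogous explicit formula for $\theta''$. To see that the denominator is strictly positive, consider $g_\lambda(t):=e^{-\lambda t}-1+t$, which is strictly convex and vanishes at $t=0$ and $t=\theta(\lambda)$; since $g_\lambda'(0)=1-\lambda<0$, the second root must satisfy $g_\lambda'(\theta)=1-\lambda(1-\theta)>0$. Thus on any compact $[1+\epsilon,\beta]$ the numerator and denominator above are bounded, with the denominator bounded away from $0$, so $\theta'$ and $\theta''$ are continuous and uniformly bounded there.

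The genuine difficulty is the boundary $\lambda\downarrow 1$, where the formula for $\theta'$ is a $0/0$ indeterminate form. I plan to handle this by swapping the roles of $\lambda$ and $\theta$: the map
\[
\lambda(\theta):=-\theta^{-1}\ln(1-\theta),\quad \theta\in[0,1),\qquad \lambda(0):=1,
\]
admits the convergent Taylor series $\lambda(\theta)=\sum_{k\ge 0}\theta^k/(k+1)=1+\theta/2+\theta^2/3+\cdots$, so it is smooth on $[0,1)$ with $\lambda'(0)=1/2$ and $\lambda''(0)=2/3$. Moreover $\lambda'(\theta)=\theta^{-2}\bigl[\theta/(1-\theta)+\ln(1-\theta)\bigr]>0$ on $(0,1)$, since the bracket vanishes at $0$ with positive derivative $\theta/(1-\theta)^2$. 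Hence $\lambda$ is a smooth diffeomorphism $[0,1)\to[1,\infty)$. By inversion, $\theta(\lambda)$ is smooth on $[1,\infty)$ with $\theta'(\lambda)=1/\lambda'(\theta)$ and $\theta''(\lambda)=-\lambda''(\theta)/\lambda'(\theta)^3$. In particular $\theta'$ and $\theta''$ extend continuously to $\lambda=1$ with values $2$ and $-16/3$, respectively, hence are bounded on the full compact interval $[1,\beta]$. Substituting back into $G''$ and then into the expression for $F''$ yields the desired constant $\Delta(q,\beta)$. The only real obstacle in this plan is the boundary behavior at $\lambda=1^+$; everything away from it follows from the smoothness of the implicit function and routine continuity/compactness arguments.
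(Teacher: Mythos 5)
Your proof is correct, and it reaches the goal by a somewhat different route than the paper. The paper computes the explicit formula for $F''(x)$, then uses the defining relation $\beta x=-\ln(1-\theta(\beta x))/\theta(\beta x)$ to rewrite $F''$ as a function $G(y)$ of $y=\theta(\beta x)$ alone, and concludes boundedness from continuity of $G$ on $(0,1)$ together with finite limits at both endpoints: a Taylor expansion gives $G(y)\to-\tfrac{4(q-1)\beta}{3q}$ as $y\to0$ and $G(y)\to0$ as $y\to1^-$. You instead avoid the messy explicit expression for $F''$ entirely: you reduce the claim to $C^2$-regularity of $\theta(\lambda)$ on the compact interval $[1,\beta]$, and you handle the only delicate point, $\lambda\downarrow 1$, by inverting the relation via the analytic map $\lambda(\theta)=-\theta^{-1}\ln(1-\theta)$ with $\lambda'(0)=1/2\neq0$, so that $\theta$, $\theta'$, $\theta''$ extend continuously to $\lambda=1$ (your boundary values $\theta'(1)=2$, $\theta''(1)=-16/3$ give $F''\to(1-1/q)\beta(2\theta'(1)+\theta''(1))=-\tfrac{4(q-1)\beta}{3q}$, matching the paper's limit, a good consistency check). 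What each buys: the paper's computation is self-contained once you trust the displayed formula and also controls the expression uniformly in $y\in(0,1)$, i.e., for all $\lambda>1$, while your argument is cleaner conceptually, exploits that only $\lambda=\beta x\in[1,\beta]$ is needed so the $y\to1^-$ endpoint never arises, and makes the smoothness of $\theta$ near criticality transparent; your first paragraph (implicit differentiation on $[1+\epsilon,\beta]$) is actually subsumed by the inverse-function argument and could be dropped. One small caveat, shared with the paper's own statement: at the single corner $x=1/\beta$ the function $F$ is not $C^1$ (the one-sided derivatives are $0$ and $2(1-1/q)$), so $F''(1/\beta)$ does not exist and the bound should be read for $x\neq 1/\beta$; this is harmless since the fact is only invoked in a neighborhood of the fixed point $a$, where $a\beta>1$.
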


\begin{fact}
\label{fact:f:d:exp}
    %$F'(a) = \frac{(q-1)\,(q a - 1)}{a\,q\,\big((q-1) - \beta(1-a)\big)} \ge \frac{q-1}{q}$.
    %$F'(a) = \frac{q-1}{q} \cdot \frac{\theta(a\beta)}{1-a\beta(1- \theta(a\beta))}\ge \frac{q-1}{q}.$ 
    $F'(a) = \frac{q-1}{q} \cdot \frac{\theta(a\beta)^2}{\theta(a\beta)+(1- \theta(a\beta))\log(1- \theta(a\beta)) }\ge \frac{q-1}{q}.$ 
\end{fact}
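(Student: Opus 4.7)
The plan is to compute $F'(a)$ directly via the chain rule and implicit differentiation of the defining relation $e^{-\lambda\theta(\lambda)}=1-\theta(\lambda)$, simplify at the fixed point $a$ using the identity $F(a)=a$ (equivalently, $\beta a\,\theta^{*} = -\log(1-\theta^{*})$ where $\theta^{*}:=\theta(\beta a)$), and then reduce the claimed lower bound to the standard convex inequality $1-t\le e^{-t}$.

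First, I would differentiate $e^{-\lambda\theta}=1-\theta$ with respect to $\lambda$, which yields
$$\theta'(\lambda)=\frac{\theta(1-\theta)}{1-\lambda(1-\theta)}.$$
Taking logs of the defining identity gives the useful relation $\lambda\theta=-\log(1-\theta)$, and substituting $\lambda=-\log(1-\theta)/\theta$ into the denominator above rewrites $\theta'(\lambda)$ purely in terms of $\theta$, namely
$$\theta'(\lambda)=\frac{\theta^{2}(1-\theta)}{\theta+(1-\theta)\log(1-\theta)}.$$

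Next, from $F(x)=\tfrac{1}{q}+\tfrac{q-1}{q}\,\theta(\beta x)\,x$, the chain rule gives $F'(x)=\tfrac{q-1}{q}\bigl[\theta(\beta x)+\beta x\,\theta'(\beta x)\bigr]$. Evaluating at $x=a$ and eliminating the explicit $\beta a$ via the fixed-point relation $\beta a=-\log(1-\theta^{*})/\theta^{*}$, the two terms share the common denominator $\theta^{*}+(1-\theta^{*})\log(1-\theta^{*})$; after routine algebra the cross terms $\theta^{*}(1-\theta^{*})\log(1-\theta^{*})$ cancel, leaving
$$F'(a)=\frac{q-1}{q}\cdot\frac{(\theta^{*})^{2}}{\theta^{*}+(1-\theta^{*})\log(1-\theta^{*})},$$
which is the claimed closed form. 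Along the way I would note that $h(t):=t+(1-t)\log(1-t)$ satisfies $h(0)=0$ and $h'(t)=-\log(1-t)>0$ on $(0,1)$, so the denominator is strictly positive; moreover $\beta>q$ and $a>1/q$ imply $\beta a>1$ so that $\theta^{*}\in(0,1)$, ensuring everything is well defined.

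For the inequality $F'(a)\ge (q-1)/q$, I would divide out $(q-1)/q$ and multiply through by the positive denominator to reduce it to $(\theta^{*})^{2}\ge \theta^{*}+(1-\theta^{*})\log(1-\theta^{*})$. Rearranging as $-\theta^{*}(1-\theta^{*})\ge (1-\theta^{*})\log(1-\theta^{*})$ and dividing by $1-\theta^{*}>0$ gives $-\theta^{*}\ge \log(1-\theta^{*})$, or equivalently $1-\theta^{*}\le e^{-\theta^{*}}$, which is the familiar convexity bound $1-t\le e^{-t}$ valid for all $t\in\mathbb{R}$. No step looks technically delicate; the only care required is in tracking signs (since $\log(1-\theta^{*})<0$) and in using the fixed-point relation to eliminate the explicit factor $\beta a$ so that the final expression depends only on $\theta^{*}$.
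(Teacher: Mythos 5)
Your proof is correct and follows essentially the same route as the paper: a direct computation of $F'$ via implicit differentiation of $e^{-\lambda\theta}=1-\theta$, elimination of $\beta x$ through $\lambda\theta=-\log(1-\theta)$, and the bound $\log(1-\theta^*)\le-\theta^*$. One small mislabeling: the relation $\beta a\,\theta^*=-\log(1-\theta^*)$ is just the defining equation of $\theta$ at $\lambda=\beta a$, not an equivalent of the fixed-point identity $F(a)=a$ (indeed your closed form for $F'(x)$ holds at every $x$ with $\beta x>1$, as in the paper), but this does not affect the argument.
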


\section{Proof of main result: cutoff when $\beta > q$}

We assume through the remainder of the paper that $\beta>\bS=q$. 
Let $a = a(\beta,q) > 1/q$ be the unique fixed point of the drift function $F$ in $(1/q,1]$; see Fact~\ref{fact:df:fp}.
We use in our analysis two different low-dimensional projections of the SW dynamics. The first is the $q$-dimensional \emph{proportions} vector.  
For a configuration $X \in \Omega$, let $\alpha(X)$ be a $q$-dimensional vector, where each coordinate of $\alpha(X)$ corresponds to the fraction of vertices assigned a given spin in $X$. We will assume that $\alpha_1(X)$ contains the fraction of vertices assigned the dominant spin in~$X$, breaking ties arbitrarily; that is, $\alpha_1(X) \ge \alpha_i(X)$ for all $i \in [q]$.

When $\beta > \bS$, for a configuration $X \sim \mu$, $\alpha(X)$ will concentrate around
the $q$-dimensional \emph{majority} vector
$$
m := \Big(a,\frac{1-a}{q-1},\dots,\frac{1-a}{q-1} \Big)
$$
or one of its permutations. 

We will design a multi-phase coupling, and in the first two phases, we bound the time it takes a copy of the SW dynamics to reach the neighborhood of $m$ from an arbitrary initial configuration. We use the first phase to argue that the SW dynamics reaches constant distance from $m$ and the second phase to further contract the distance to $O(1/\sqrt{n})$; the latter will dominate the order of the mixing time.

Let $X_0 \in \Omega$ be an arbitrary configuration and $Y_0 \sim \mu$.
We will bound the probability that $\{X_t\}$ 
 and $\{Y_t\}$ have not coupled after $c(\beta,q) \log n + O(1)$ steps, where 
 \begin{equation}
 \label{eq:const}
 c(\beta,q) =  \Big(2 \log \Big(\frac{1}{F'(a)}\Big)\Big)^{-1} = \frac{1}{2 \log\Big(\frac{q}{q-1} \cdot \frac{\theta(a\beta)+(1- \theta(a\beta))\log(1- \theta(a\beta)) }{\theta(a\beta)^2}\Big)}; 
\end{equation}
 see Fact~\ref{fact:f:d:exp}. From~\eqref{eq:rg:root}, we know that $\theta(a\beta)$ is the unique root in $(0,1]$ of the equation
 $$
 e^{\frac{-\beta y}{q-(q-1)y}} = 1 - y.
 $$

In the first two phases $\{X_t\}_{t\ge 0}$ and $\{Y_t\}_{t\ge 0}$ evolve independently. 
We claim that after $T_1=O(1)$ steps
$\|\alpha(X_{T_1})-m\|_\infty\le \varepsilon$ and $\|\alpha(Y_{T_1})-m\|_\infty\le \varepsilon$ for any desired constant $\varepsilon \in (0,1)$.

\begin{lemma}
    \label{lemma:constant_distance}
        For any constants $\delta\in(0,1)$ and $\varepsilon > 0$ sufficiently small, for all sufficiently large $n$ and any starting state $X_0 \in \Omega$, after $T=O(1)$ steps, with probability at least $\delta$, the SW dynamics reaches a state $X_T$ such that $\|\alpha(X_T)-m\|_\infty\le \varepsilon$.
\end{lemma}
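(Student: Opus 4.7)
The plan is to track the one-dimensional projection $x_t := \alpha_1(X_t)$ using the drift function $F$, combining a one-step concentration estimate with the deterministic iteration of $F$. Since $F$ is strictly increasing and concave on $[1/q,1]$ with unique fixed point $a$ and $|F'(a)|<1$ (Facts~\ref{fact:df:fp} and \ref{fact:f:inc}), the deterministic orbit $F^t(x_0)$ converges to $a$ geometrically from any $x_0\in[1/q,1]$; hence for any $\varepsilon>0$ there exists $T=T(\beta,q,\varepsilon)=O(1)$ with $|F^T(x_0)-a|\le\varepsilon/4$ uniformly in $x_0$.

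For the one-step estimate I would treat the ``single-supercritical'' regime where $\beta\alpha_i(X_t)<1$ for all $i\neq 1$. There, percolation on the largest class $V_1$ (of size $xn$) yields a single giant $\mathcal L_1$ of size $\theta(\beta x)xn+O(\sqrt n)$ with high probability by Lemma~\ref{lemma:rg:gc}, and all components outside $V_1$ are subcritical with total susceptibility $\sum|\mathcal C|^2=O(n)$ by Lemma~\ref{lemma:rg:sucep}. In the coloring step $\mathcal L_1$ receives a uniform color $J\in[q]$, and Hoeffding's inequality applied to the independent uniform color choices of the remaining components gives that each color receives $(1-\theta(\beta x)x)n/q+O(\sqrt n)$ additional vertices with probability $1-o(1)$. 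Using the identity $1-F(x)=(q-1)(1-\theta(\beta x)x)/q$, the $J$-colored class has size $F(x)n+O(\sqrt n)$ while each other class has size $(1-F(x))/(q-1)\cdot n+O(\sqrt n)$, yielding $\|\alpha(X_{t+1})-(F(x),(1-F(x))/(q-1),\ldots,(1-F(x))/(q-1))\|_\infty\le\eta$ up to permutation with probability $1-c(\eta)$. Combining this one-step bound with the $T$-step iteration via a union bound, and choosing the tolerance $\eta$ small relative to $\varepsilon$, $T$, and $\sup_{[1/q,1]}|F'|$ (finite by Fact~\ref{fact:f:d2}) to control the propagated linearization error, gives $\|\alpha(X_T)-m\|_\infty\le\varepsilon$ with probability at least $\delta$, provided the single-supercritical condition is maintained throughout.

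The main obstacle is the initial regime where several color classes are supercritical, in particular the near-symmetric states with $\alpha_i(X_0)\approx 1/q$ for all $i$, where each class produces a comparably sized giant and the single-giant analysis breaks. To escape this regime I would exploit the randomness of the coloring step: the counts $(N_1,\ldots,N_q)$ of comparably-sized giants assigned to each color are distributed as $\mathrm{Multinomial}(q;1/q,\ldots,1/q)$ with mean $1$, so $\max_j N_j\ge 2$ with positive constant probability, which produces a strictly dominant color in $X_1$ with $\alpha_1(X_1)-1/q$ bounded below by a constant. Repeating this symmetry-breaking step a constant number $O(\log(1/(1-\delta)))$ of times amplifies the success probability to $\delta$, and each successful step widens the gap $\alpha_1-\alpha_i$ further; since $1-(q-1)/\beta<a$, after $O(1)$ such steps the single-supercritical condition $\alpha_1>1-(q-1)/\beta$ is reached and the main argument applies. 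The total number of steps remains $O(1)$ and the overall success probability is at least $\delta$.
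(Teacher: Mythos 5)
The paper's own proof of this lemma is essentially a citation plus boosting: it invokes Lemma~30 of~\cite{galanis2019swendsen}, which already gives, from an \emph{arbitrary} starting state, a constant $T$ and a constant success probability $\gamma=\Omega(1)$, and then amplifies to any constant $\delta$ by running $kT$ steps (legitimate precisely because the cited lemma is worst-case in the start state). Your final amplification step is this same boosting idea, and your analysis of the ``single supercritical class'' regime is sound and mirrors the machinery the paper uses elsewhere (Lemma~\ref{lemma:step:concentration} together with a constant-length iteration of $F$, where error propagation over $O(1)$ steps is harmless). The difference is that you attempt to re-prove the content of Lemma~30 from scratch, and that is exactly where your argument has a genuine gap.

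The gap is the escape from the regime where several colour classes are supercritical. First, your stated sufficient condition for single-supercriticality, $\alpha_1>1-(q-1)/\beta$, is not sufficient: it does not force every non-dominant class below $1/\beta$ (e.g.\ $q=3$, $\beta=10$, $\alpha=(0.85,0.14,0.01)$ satisfies it while class $2$ is supercritical); the relevant condition is on the second-largest proportion, $\alpha_2<1/\beta$. Second, and more seriously, after your multinomial symmetry-breaking step from a near-symmetric state the non-dominant classes typically still have proportion close to $1/q>1/\beta$, hence remain supercritical; in fact for $q\ge 3$ and $\beta=q(1+\epsilon)$ with $\epsilon$ small, each non-dominant proportion after one step from the symmetric state is $\approx(1-\theta(\beta/q)/q)/q$ with $\theta(1+\epsilon)\approx 2\epsilon$, and this exceeds $1/\beta$ since $2/q<1$ --- so every class is still supercritical and your ``main argument'' cannot yet be applied. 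The assertion that ``each successful step widens the gap'' until only one class is supercritical is precisely the delicate multi-giant analysis (several giants of different sizes being produced and randomly recoloured each step, with the lead of the largest class shown to grow with sufficient probability) that constitutes the cited lemma in~\cite{galanis2019swendsen}; in your write-up it is asserted rather than proved. Relatedly, even once single-supercriticality holds it is not automatically preserved along the trajectory: from a state with dominant proportion $x$ near $1/q$ the refilled non-dominant classes are each $\approx(1-\theta(\beta x)x)/q$, which for $q\ge3$ and $\beta$ near $q$ again exceeds $1/\beta$; preservation holds near the fixed point (where $(1-a)/(q-1)<1/\beta$) but needs an argument in transit. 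As it stands the proposal does not establish the lemma; the quickest repair is to do what the paper does and quote the known worst-case constant-probability convergence result, keeping your boosting step.
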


A version of this lemma was established in~\cite{galanis2019swendsen}, but we require a slight refinement here so that the probability of success is any constant arbitrarily close to $1$.

In the second phase, 
we claim that after an additional $T_2 = c(\beta,q) \log n + O(1)$ steps, in which both copies of the chain continue to evolve independently, for any constant $\delta \in (0,1)$ there exists a constant $C = C(\delta) > 0$ such that with with probability at least~$\delta$:
\begin{equation}
\label{eq:m:dist}
\|\alpha(X_{T_1+T_2}) - m\|_\infty \le \frac{C}{\sqrt{n}},~\text{and}~\|\alpha(Y_{T_1+T_2}) - m\|_\infty \le \frac{C}{\sqrt{n}}.
\end{equation}
This is established in the following lemma.
\begin{lemma}
     \label{lemma:main:contract-over}   
    Suppose $X_0$ is such that 
    $\|\alpha(X_0)-m\|_{\infty} \le \varepsilon$ for a sufficiently small constant $\varepsilon > 0$
    and let $T = c(\beta,q){\log n} + \gamma$ for constant $\gamma > 0$.
    Then, for any constant $\delta \in (0,1)$, there exists a constant $C = C(\gamma,\delta)>0$ such that with probability at least $\delta$, we have
    $
        \|\alpha(X_T) - m \|_{\infty} \le {C}/{\sqrt{n}}.
    $
\end{lemma}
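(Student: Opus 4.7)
The plan is to reduce Lemma~\ref{lemma:main:contract-over} to analysis of the scalar $D_t := \alpha_1(X_t) - a$, which satisfies a stochastic recursion $D_{t+1} = F'(a) D_t + (\text{lower order})$ near the fixed point. Iterating this geometric contraction for $T = c(\beta,q)\log n + \gamma$ steps, the choice of $c(\beta,q)$ in~\eqref{eq:const} is precisely calibrated so that $F'(a)^T = F'(a)^\gamma/\sqrt{n}$, meaning the initial deterministic distance $O(\varepsilon)$ collapses exactly to the $1/\sqrt{n}$ scale at which the accumulated stochastic noise saturates. Control on $\alpha_1$ then propagates to the remaining coordinates via the symmetry of the coloring step in the final iteration.

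Concretely, conditioning on $X_t$ with $x_t := \alpha_1(X_t)$ near $a$, the percolation step restricted to the dominant spin class is supercritical (since $\beta a > 1$ by Fact~\ref{fact:df:fp}), so Lemma~\ref{lemma:rg:gc} yields a unique giant component of size $\theta(\beta x_t) x_t n + O(\sqrt{n})$, while Lemma~\ref{lemma:rg:sucep} bounds the sum of squared small-component sizes by $O(n)$. The coloring step assigns the giant a uniform spin $s^\star$, and a direct mean/variance computation gives
\[
\alpha_{s^\star}(X_{t+1}) = F(x_t) + \eta_t + \mathrm{err}_t, \qquad \E[\eta_t\mid X_t]=0,\ \Var(\eta_t\mid X_t) = O(1/n),\ |\mathrm{err}_t| = O(1/n),
\]
via the identity $\theta(\beta x_t) x_t + (1-\theta(\beta x_t)x_t)/q = F(x_t)$. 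Since $F(x_t) \gg (1-F(x_t))/(q-1)$ near $a$, the coordinate $s^\star$ is almost surely the maximum, so the same expression holds for $\alpha_1(X_{t+1})$. Taylor expansion at $a$ together with Fact~\ref{fact:f:d2} then gives $D_{t+1} = F'(a) D_t + R_t + \eta_t + \mathrm{err}_t$ with $|R_t| \le \tfrac{\Delta}{2} D_t^2$.

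Unrolling the recursion, $D_T = F'(a)^T D_0 + \sum_{s=0}^{T-1} F'(a)^{T-1-s}(R_s+\eta_s+\mathrm{err}_s)$. The deterministic part is $O(\varepsilon F'(a)^\gamma/\sqrt{n})$ by the choice of $c(\beta,q)$; the bias sum is $O(1/n)\sum_k F'(a)^k = O(1/n)$; and the noise sum is a martingale with total conditional variance $O(1/n)\sum_k F'(a)^{2k} = O(1/n)$, so Chebyshev bounds it by $C_1(\delta)/\sqrt{n}$ with probability at least $1-\delta/3$. A bootstrap controls the nonlinear term: define $\tau := \min\{s\le T : |D_s| > 2\varepsilon\}$ and show by induction, together with a union bound of the Chebyshev tails over $s \le T = \Theta(\log n)$, that $|D_s| \le F'(a)^s \varepsilon + O(\sqrt{\log n/n})$ for all $s \le T$ on a good event (so that $\tau > T$), yielding $\sum_s F'(a)^{T-1-s} D_s^2 = O(\varepsilon^2/\sqrt{n}) + O(\log n/n) = O(1/\sqrt{n})$. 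Combining the four contributions, $|D_T| \le C/\sqrt{n}$ with probability at least $1 - \delta/2$.

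Finally, the same one-step analysis applied at time $T$ gives $\alpha_i(X_T) = (1-F(x_{T-1}))/(q-1) + \zeta_T^{(i)}$ for each $i \ge 2$, with $|\zeta_T^{(i)}| = O(1/\sqrt{n})$ with high probability by Hoeffding on the i.i.d.\ uniform spins assigned to the small components (whose $\sum L_j^2 = O(n)$ via Lemma~\ref{lemma:rg:sucep}); combined with $|D_{T-1}| = O(1/\sqrt{n})$ on the same event, this yields $|\alpha_i(X_T) - (1-a)/(q-1)| = O(1/\sqrt{n})$, and a union bound over the $q$ coordinates gives $\|\alpha(X_T) - m\|_\infty \le C/\sqrt{n}$ with probability at least $\delta$, as desired. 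The main obstacle is the bootstrap for the nonlinear term: the accumulated geometric-weighted noise is controlled only at the CLT scale, so the best uniform bound on $|D_s|$ over $\Theta(\log n)$ iterations is $O(\sqrt{\log n/n})$ rather than $O(1/\sqrt{n})$, and one must verify that this extra logarithmic factor is absorbed by the geometric summation in the nonlinear contribution and does not leak into the final bound, which is exactly what constrains how small the initial constant $\varepsilon$ from Lemma~\ref{lemma:constant_distance} must be.
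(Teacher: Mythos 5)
Your proposal is correct in outline, but it organizes the argument differently from the paper. The paper splits the proof into a stochastic tracking lemma (Lemma~\ref{lemma:main:contract}: $\alpha(X_t)$ stays within $O(1/\sqrt{n})$ of the deterministic orbit $\kappa(F^t(\alpha_1(X_0)))$, proved from the one-step bound of Lemma~\ref{lemma:step:concentration} together with a union bound over per-step thresholds $r_i$ that grow geometrically backwards in time) and a purely deterministic lemma (Lemma~\ref{lemma:f:fp-convergence}: $|F^T(x_0)-a|=O(1/\sqrt{n})$), combined by the triangle inequality. You instead linearize once and for all at the fixed point, writing $D_{t+1}=F'(a)D_t+R_t+\eta_t+\mathrm{err}_t$, and control the geometrically weighted noise sum by a martingale variance computation plus Chebyshev, the quadratic remainders by a stopping-time bootstrap, and the non-dominant coordinates only in the final step. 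Both routes rest on the same amortization insight---per-step fluctuations of order $n^{-1/2}$ are damped geometrically, and $T$ is calibrated so that $F'(a)^{T}\asymp F'(a)^{\gamma}n^{-1/2}$---so the constant $c(\beta,q)$ is preserved either way. Your version buys a single clean recursion and second-moment control of the accumulated noise; the paper's buys simpler per-step inputs (only tail bounds, no conditional mean/variance identities) and uniform control of all $q$ coordinates along the trajectory, which it reuses elsewhere (e.g., in Lemma~\ref{lemma:partition:convergence} and Fact~\ref{fact:step:preserv}).

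Two points in your sketch need care, though neither is fatal. First, the claim $|\mathrm{err}_t|=O(1/n)$ does not follow from the cited lemmas: integrating the tail of Lemma~\ref{lemma:rg:gc} only gives $|\E[L_1]-\theta(\beta x_t)x_t n|=O(\sqrt{n})$, i.e., a per-step bias of order $n^{-1/2}$; this is harmless because the geometrically weighted bias sum is still $O(n^{-1/2})$, but it should be stated at that level. Second, the bootstrap induction cannot retain the exact rate $F'(a)$: the quadratic remainders degrade the uniform bound to $|D_s|\le (F'(a)+c\varepsilon)^s\varepsilon+O(\sqrt{\log n/n})$, and $(F'(a)+c\varepsilon)^{T}$ by itself is not $O(n^{-1/2})$. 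This is fine precisely because the degraded bound is fed only into the quadratic term: for $\varepsilon$ small enough that $(F'(a)+c\varepsilon)^2<F'(a)$, the sum $\sum_s F'(a)^{T-1-s}D_s^2$ is $O(\varepsilon^2 n^{-1/2})+O(\log n/n)$, while the leading term keeps the exact coefficient $F'(a)^T D_0$ from the Taylor expansion. This is the same device as the paper's choice of $\rho+\varepsilon_0$ in the proof of Lemma~\ref{lemma:f:fp-convergence}, and it confirms your closing remark that the only genuine constraint is on how small the constant $\varepsilon$ from Lemma~\ref{lemma:constant_distance} must be.
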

    A similar convergence result was proved in~\cite{galanis2019swendsen}, where it is required that $T = A \log n$ with $A > c(\beta,q)$. This allowed for a simpler analysis. To obtain our sharper bound, we use the fact that deviations from the expected proportions vector after one step are amortized over the remaining steps of the dynamics, so that early (larger) deviations end up contributing less to the overall separation from $m$.

In the analysis of the third phase of coupling,
we use an additional ``stationary-like'' property 
for both copies of the chain. 
Let $\mathcal V = \{V_1,\dots,V_q\}$ be a partition of $V$ where $V_i$ contains all vertices of $X_{T_1}$ assigned spin $i$.
For constant $\lambda \in (0,1)$, we say that $\mathcal V$ is a $\lambda$-partition if $\min_i |V_i| \ge \lambda |V|$. From Lemma~\ref{lemma:constant_distance} we know that
$\mathcal V$ is a $\lambda$-partition with constant probability  arbitrarily close to $1$.

Let
$\alpha_{1,i}(X_t)$
denote the fraction of vertices of $V_i$ assigned the dominant spin in $X_t$; i.e., there are $\alpha_{1,i}(X_t)|V_i|$ such vertices.
The following lemma ensures that during the second phase, in parallel to making progress towards $m$, at some time $T_1 + T_2$, the fraction of vertices assigned the spin from the dominant spin class in each $V_i$ is roughly the same fraction as that from the full configuration $X_T$ in $V$.
This additional property will allow us to couple
the two process in $O(1)$ additional steps in the third phase. 

 \begin{lemma}
    \label{lemma:partition:convergence}
        Let $X_0$ be an arbitrary initial configuration and let $\mathcal V$ be a $\lambda$-partition of $V$ with constant $\lambda \in (0,1)$.  
        For any $\delta \in (0,1)$, there exist a constant $C > 0$ such that with probability at least $\delta$ when $T \ge \frac{\log n}{2 \log \big( \frac{q}{q-1}\big)} + \frac{\log (2/C)}{\log \big( \frac{q}{q-1}\big)}$, we have        
        $
        \max_{k\in[q]} |\alpha_1(X_T) - \alpha_{1,k}(X_T)| \le {C}/{\sqrt{n}}.
        $
\end{lemma}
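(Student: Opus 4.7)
The plan is to track, for each spin $s \in [q]$ and partition part $V_k$, the signed discrepancy
\[
D^{(t)}_{s,k} := n^{(t)}_{s,k}/|V_k| - n^{(t)}_{s}/n,
\]
where $n^{(t)}_{s}$ and $n^{(t)}_{s,k}$ denote the number of vertices of spin $s$ in $X_t$, globally and restricted to $V_k$, and show that these contract geometrically under a single SW step. Since $|\alpha_1(X_T) - \alpha_{1,k}(X_T)| = |D^{(T)}_{s^*,k}|$ where $s^*$ is the dominant spin at time $T$, it suffices to bound $\max_{s,k} |D^{(T)}_{s,k}|$.

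First I decompose an SW step into its percolation and coloring substeps. Writing $U^{(t)}_{s''} := \{v : X_t(v) = s''\}$, the percolation restricted to $U^{(t)}_{s''}$ is an $\mathcal{G}(n^{(t)}_{s''}, 1 - e^{-\beta/n})$ random graph. By Lemmas~\ref{lemma:rg:gc} and~\ref{lemma:rg:partition} applied within each spin class, the giant component $\mathcal{C}^*_{s''}$ (when supercritical) satisfies $|\mathcal{C}^*_{s''}| = \theta_{s''}\, n^{(t)}_{s''} + O(\sqrt{n})$ and $|\mathcal{C}^*_{s''} \cap V_k| = \theta_{s''}\, n^{(t)}_{s'',k} + O(\sqrt{n})$ with probability $1 - O(n^{-1})$, where $\theta_{s''} := \theta(\beta n^{(t)}_{s''}/n)$ (and $\theta_{s''}:=0$ if the class is subcritical). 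By Lemma~\ref{lemma:rg:sucep}, non-giant components obey $\sum_{\mathcal{C} \ne \mathcal{C}^*_{s''}} |\mathcal{C}|^2 = O(n)$, so their contribution to $D^{(t+1)}_{s',k}$ has variance $O(1/n)$.

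Using the identity $\sum_\mathcal{C}(|\mathcal{C} \cap V_k|/|V_k| - |\mathcal{C}|/n) = 0$ and extracting the giant from each spin class yields the recursion
\[
D^{(t+1)}_{s',k} = \sum_{s''} \theta_{s''}\, D^{(t)}_{s'',k}\, \bigl(\mathbb{1}[c(\mathcal{C}^*_{s''}) = s'] - 1/q\bigr) + \xi^{(t+1)}_{s',k} + O(n^{-1/2}),
\]
with the giant-component colors $c(\mathcal{C}^*_{s''})$ mutually independent uniform on $[q]$, and $\xi^{(t+1)}_{s',k}$ a centered fluctuation with variance $O(1/n)$. Using $\Var(\mathbb{1}[c = s'] - 1/q) = (q-1)/q^2$ and independence of the indicators across $s''$, the conditional second moment satisfies
\[
\E\bigl[(D^{(t+1)}_{s',k})^2 \bigm| X_t\bigr] \le \tfrac{q-1}{q^2} \sum_{s''} \theta_{s''}^2\, (D^{(t)}_{s'',k})^2 + O(1/n);
\]
summing over $s'$ and bounding $\theta_{s''} \le 1$ gives
\[
\E\bigl[\|D^{(t+1)}_{\cdot,k}\|_2^2 \bigm| X_t\bigr] \le \tfrac{q-1}{q}\, \|D^{(t)}_{\cdot,k}\|_2^2 + O(1/n).
\]

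Iterating this contraction and using $\|D^{(0)}_{\cdot,k}\|_2^2 = O(1)$ yields $\E[\|D^{(T)}_{\cdot,k}\|_2^2] \le ((q-1)/q)^T \cdot O(1) + O(1/n)$. Choosing $T$ as in the statement drives the geometric term to $O(1/n)$; Markov's inequality then yields $\|D^{(T)}_{\cdot,k}\|_2 \le C/\sqrt{n}$ with probability at least $1 - \delta/q$, and since $|D^{(T)}_{s^*,k}| \le \|D^{(T)}_{\cdot,k}\|_2$, a union bound over $k \in [q]$ completes the proof. The principal technical obstacle is the potential simultaneous presence of giants in multiple supercritical spin classes: Lemma~\ref{lemma:rg:partition} must be invoked inside each such class against the same auxiliary set $V_k$, and the $O(\sqrt{n})$ approximation errors from Lemmas~\ref{lemma:rg:gc}--\ref{lemma:rg:partition} must be carefully propagated across $O(\log n)$ iterations to verify they remain lower-order than the geometric term at every step.
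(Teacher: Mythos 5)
Your one-step decomposition is essentially right and matches the structure of the paper's argument (the paper tracks $d_t^{(k)}=|\alpha_1(X_t)-\alpha_{1,k}(X_t)|$ and shows $d_t^{(k)}\le(1-\tfrac1q)\theta(\beta\alpha_1(X_{t-1}))\,d_{t-1}^{(k)}+\hat\xi_t^{(k)}$ with $\hat\xi_t^{(k)}=O(n^{-1/2})$ w.h.p.). But your contraction analysis has a genuine quantitative gap that loses a factor of $2$ in the exponent of $T$. Your second-moment bound gives $\E[\|D^{(T)}_{\cdot,k}\|_2^2]\le\big(\tfrac{q-1}{q}\big)^T\cdot O(1)+O(1/n)$, and with $T=\frac{\log n}{2\log(q/(q-1))}$ the geometric term equals $n^{-1/2}$, \emph{not} $O(1/n)$ as you claim; Markov then yields only $\|D^{(T)}_{\cdot,k}\|_2=O(n^{-1/4})$. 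To drive the geometric term to $O(1/n)$ your argument needs $T\ge\frac{\log n}{\log(q/(q-1))}$, twice the time in the statement — and that larger time is not guaranteed to fit inside $T_1+T_2=c(\beta,q)\log n+O(1)$, since $c(\beta,q)$ can be as small as $(2\log\frac{q}{q-1})^{-1}$. The $O(n^{-1/2})$ conclusion is essential downstream (assumption A2 feeds the multinomial coupling), so the weaker $n^{-1/4}$ bound does not suffice.

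The loss comes from where you put the randomness of the coloring. In the regime where this lemma is applied, exactly one spin class is supercritical, so only the term $s''=s^*$ (the current dominant spin) survives in your recursion, and the only coordinate that propagates to the \emph{next} step is $s'=c(\mathcal C^*_{s^*})$ — which is by definition the new dominant spin. For that coordinate the coefficient is $\theta_{s^*}(1-1/q)$ deterministically (the indicator equals $1$), so $|D^{(t+1)}_{s^*_{t+1},k}|\le(1-\tfrac1q)\theta_{s^*}|D^{(t)}_{s^*_t,k}|+O(n^{-1/2})$ holds as a first-moment (pathwise) contraction at rate $(q-1)/q$ per step, which is exactly what the statement's $T$ requires. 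By instead summing the variance $(q-1)/q^2$ of the centered indicator over all $q$ target spins and bounding the dominant coordinate by the full $\ell_2$ norm, you convert this deterministic contraction of the one relevant coordinate into a $\sqrt{(q-1)/q}$-per-step contraction of the norm, which is where the factor of $2$ disappears. Restricting attention to the supercritical coordinate (as the paper does) repairs the argument; the rest of your proposal — the zero-sum identity, the $O(n^{-1/2})$ fluctuation control via Lemmas~\ref{lemma:rg:partition} and~\ref{lemma:rg:sucep}, and the geometric accumulation of the additive error terms — is sound.
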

We note that $(2 \log \big( \frac{q}{q-1}\big))^{-1} < c(\beta,q)$ by Fact~\eqref{fact:f:d:exp}, and so at time $T_1+T_2$ we reached two configurations that are close to $m$, i.e., satisfy~\eqref{eq:m:dist}, but that also have $\approx \!a|V_i|$ vertices assigned the dominant spin in each partition set $V_i$.

For the third phase, we consider another lower-dimensional projection of the SW dynamics; in particular, we zoom into a projection of dimension $q \times q$. For $\{X_t\}_{t \ge 0}$, let $A_t := \{A_{ij,t}\}_{i,j\in[q]},$
with $A_{ij,t}$ denoting the number of vertices in $V_i$ assigned color $j$ in $X_t$.
Similarly, define $A_t'$ for $\{Y_t\}_{t \ge 0}$.

We have shown that after the first two phases of the coupling, for any
constant $\hat\delta \in (0,1)$ there is a suitable constant $\hat C > 0$ such that at time $T = T_1+T_2$ with probability at least $\hat \delta$ the following holds:
\begin{enumerate}
        %\item[A1.] $|a_{j_{\mathrm{max}},T} - an| \leq \hat C \sqrt{n}$ and $|a'_{j_{\mathrm{max}},T} - an| \leq \hat C \sqrt{n}$;
        %\item[A2.] $|a_{j,T} - \tfrac{1-a}{q-1} n| \leq \hat C \sqrt{n}$ and $|a'_{j,T} - \tfrac{1-a}{q-1} n| \leq \hat C \sqrt{n}$ for all \(j \in [q]\ and\ j\ne j_{\mathrm{max}}\);
        \item[A1.] $\|\alpha(X_T) - m\|_\infty \le \hat C /\sqrt{n}$ and $\|\alpha(Y_T) - m\|_\infty \le \hat C /\sqrt{n}$;
        \item[A2.]  $|\alpha_{1,i}(X_T) - \alpha_{1,i}(Y_T)| \le  \hat C / \sqrt{V_i}$ for all $i \in [q]$.
\end{enumerate}

Under these assumptions, we design a coupling such that $A_{T+1} = A_{T+1}'$ with probability $\Omega(1)$.

\begin{lemma}
    \label{lem:basket_coupling}
    Let $\lambda \in (0,1)$ be a constant independent of $n$.
    Let $\mathcal V = \{V_1,..., V_q\}$ be a $\lambda$-partition and let $A_t$ and $A_t'$ be two copies of the projection chain such that {\normalfont{A1}} and {\normalfont{A2}} above hold. 
     Then, there exists a coupling of $A_{t+1}$ and $A_{t+1}'$ such that with probability $\Omega(1)$, it holds that $A_{t+1}=A_{t+1}'$.
\end{lemma}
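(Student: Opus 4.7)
The plan is to couple one SW step applied to $X_t$ and $Y_t$ so that the projections $A_{t+1}$ and $A_{t+1}'$ coincide with positive constant probability. I first reveal the percolation structures independently on both sides, then couple the random color of the giant component in the dominant color class, and finally invoke a multi-dimensional local central limit theorem (LCL) for the coloring of the remaining (small) components.

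By A1, $\alpha_1(X_t)$ and $\alpha_1(Y_t)$ are within $O(1/\sqrt n)$ of $a>1/\beta$, so percolation on each dominant color class is supercritical with a unique giant $\mathcal L_1^X$ (resp.\ $\mathcal L_1^Y$) of size $\theta(\beta \alpha_1(X_t))\alpha_1(X_t) n + O(\sqrt n)$ by Lemma~\ref{lemma:rg:gc}, while each minority color class is subcritical. By Lemma~\ref{lemma:rg:partition} applied with $U=V_i$, the intersection $G^X_i := |\mathcal L_1^X \cap V_i|$ concentrates around $\theta(\beta\alpha_1(X_t))\, \alpha_{1,i}(X_t)\, |V_i|$ up to an $O(\sqrt n)$ fluctuation, and combined with A2 this yields $|G^X_i - G^Y_i| = O(\sqrt n)$ for all $i \in [q]$ with constant probability. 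Lemma~\ref{lemma:rg:sucep} further gives $\sum_{\mathcal C \neq \mathcal L_1} |\mathcal C|^2 = \Theta(n)$ with constant probability, both in the supercritical dominant class and in the subcritical minority classes. I then couple the giants' uniform random colors so that $J^X = J^Y =: J$, and conditioning on the percolation structures, on $J$, and on the above estimates, I may write
\[
A^X_{t+1,ij} = \mathbf{1}[j=J]\, G^X_i + S^X_{ij}, \qquad S^X_{ij} := \sum_{\mathcal C \neq \mathcal L_1^X} |\mathcal C \cap V_i| \, \mathbf{1}[\mathrm{col}(\mathcal C) = j],
\]
and analogously for $Y$, where $S^X$ is a sum of $\Theta(n)$ independent bounded random matrices (one per non-giant component, each of size $O(\log n)$) with entrywise conditional mean $(|V_i|-G^X_i)/q$ and entrywise conditional variance of order $\Theta(n)$.

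To finish, I approximate the conditional law of $S^X$ (resp.\ $S^Y$) by a discrete Gaussian on the affine sublattice cut out by the row-sum constraint $\sum_j S^X_{ij} = |V_i| - G^X_i$ (resp.\ with $G^Y_i$), using a multi-dimensional LCL for sums of independent bounded lattice vectors. The resulting Gaussians have means differing by $O(\sqrt n)$ per coordinate, which is the same order as the standard deviation, and covariance matrices of order $\Theta(n)$ differing by a lower-order perturbation, so their TV distance is bounded by a constant $c<1$ independent of $n$; the same bound then holds between the conditional laws of $A^X_{t+1}$ and $A^Y_{t+1}$. A maximal coupling therefore achieves $A^X_{t+1} = A^Y_{t+1}$ with probability at least $1 - c - o(1) = \Omega(1)$. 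The main obstacle is establishing the multi-dimensional LCL with sufficient precision on the affine sublattice imposed by the row-sum constraints, and showing that the random covariance matrices of $S^X$ and $S^Y$ concentrate close enough to a common deterministic matrix that the Gaussian-to-Gaussian TV remains bounded away from $1$; both are concentration statements on the joint small-component profile $(|\mathcal C \cap V_i|)_{i \in [q]}$ beyond the aggregate susceptibility bound of Lemma~\ref{lemma:rg:sucep}.
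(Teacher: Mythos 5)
Your high-level architecture (independent percolation, a shared color for the two giants, and exploiting that the $O(\sqrt n)$ mean discrepancy is of the same order as the $\Theta(\sqrt n)$ fluctuations of the coloring step) matches the spirit of the paper's proof, but the step that actually produces the coupling is not established. You reduce the problem to a TV bound between the conditional laws of $S^X$ and $S^Y$, sums over \emph{all} non-giant components, and you propose to get it from a multi-dimensional local CLT on the affine sublattice cut out by the row-sum constraints, followed by a Gaussian-to-Gaussian comparison with random, only-approximately-equal covariances. You yourself flag this as ``the main obstacle,'' and it is: the summands are random vectors $(|\mathcal C\cap V_i|)_{i\in[q]}$ whose joint profile is not controlled by Lemma~\ref{lemma:rg:sucep} (which only bounds the aggregate susceptibility, and not within a fixed $V_i$, nor from below), the LCL must hold at TV precision (pointwise on the lattice, uniformly, with summable error) rather than in a weak/CLT sense, and the covariance concentration needed to keep the two discrete Gaussians at TV distance bounded away from $1$ is exactly the kind of ``joint small-component profile'' statement you have not proved. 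As written, the argument asserts its key estimate rather than deriving it, so there is a genuine gap.

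The paper sidesteps all of this with a different decomposition of the coloring step. Components of size at least two are colored independently in the two copies, and a Markov-inequality bound on $\sum_{k\ge2}|C_k\cap V_i|^2$ combined with Hoeffding shows the resulting discrepancy in each cell $(i,j)$ is only $O(\sqrt{|V_i|})$ with constant probability (the giant's contribution is handled by Lemma~\ref{lemma:rg:partition} together with A2, as in your sketch). The crucial device is then to hold back the isolated vertices: by Lemma~\ref{lemma:rg:isolated} and the $\lambda$-partition property each $V_i$ contains $\Omega(n)$ isolated vertices in both copies, their coloring is \emph{exactly} multinomial conditionally on everything else, and the known multinomial coupling (Fact~\ref{fact:multinomial}) lets one cancel any prescribed $O(\sqrt n)$ cellwise discrepancy with probability $\Omega(1)$. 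This replaces your unproven LCL-plus-Gaussian-comparison step with a single black-box fact about multinomials, which is why no local limit theorem for random component-size sums is needed. If you want to salvage your route, you would essentially have to prove that LCL and the covariance concentration yourself; the simpler fix is to reserve the isolated vertices as a coupling reservoir as the paper does.
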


Similar couplings were also 
designed in~\cite{LNNP,galanis2019swendsen} but for the high-temperature $\beta < \bs$ setting where all the 
spin classes have roughly the same number of vertices and the percolation step is sub-critical in all classes. Here, we have to contend with the presence of a dominant spin class and a super-critical percolation step; this is the reason for the additional step in our analysis (Lemma~\ref{lem:basket_coupling}) that guarantees that the extra assumption A2 holds after the first two phases.

The final step of our coupling is a simple boosting phase so that the overall probability of success is at least $\ge 3/4$.

\begin{lemma}
\label{lemma:boosting}
    Let $X_0$ and $Y_0$ be configurations such that
    {\normalfont{A1}} and {\normalfont{A2}} hold. Then, there exists a coupling and a constant integer $\ell \ge 0$ such that 
    $
        \Pr\big(\cup_{i=1}^\ell \{A_{t+i}=A'_{t+i}\}\big)\ge \frac 34.
    $
\end{lemma}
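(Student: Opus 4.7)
The plan is to iterate the one-step projection coupling of Lemma~\ref{lem:basket_coupling} a constant number of times, exploiting the fact that conditions A1 and A2 are preserved under a single step of the SW dynamics with probability that can be made arbitrarily close to $1$. Let $c = c(q,\beta) > 0$ be the lower bound on $\Pr(A_{t+1} = A'_{t+1})$ supplied by Lemma~\ref{lem:basket_coupling}; I will fix $\ell$ to be the smallest integer with $(1-c)^\ell \le 1/8$, and set $\eta := 1/(8\ell)$.

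At each of the $\ell$ successive time steps I will apply the coupling of Lemma~\ref{lem:basket_coupling}. The invocation at step $i$ is valid provided the current pair satisfies A1 and A2 with some constants $\hat C_i$, and the core technical claim is that if A1 and A2 hold at time $t+i-1$, then they continue to hold at time $t+i$ with probability at least $1-\eta$, for possibly enlarged constants $\hat C_i = \hat C_{i-1} + O(1)$. For A1 this is a single-step contraction argument: the drift sends $\alpha(X_{t+i-1})$ to within $|F'(a)|\cdot \hat C_{i-1}/\sqrt{n}$ of $m$ in expectation (Fact~\ref{fact:df:fp}), the one-step fluctuation is $O(1/\sqrt{n})$ by Lemma~\ref{lemma:rg:gc}, and the contraction $|F'(a)|<1$ prevents constant blow-up over the $\ell = O(1)$ iterations. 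For A2 the preservation is a single-step version of the argument behind Lemma~\ref{lemma:partition:convergence}: Lemma~\ref{lemma:rg:partition} guarantees that the dominant color-class distributes across each $V_j$ with $O(\sqrt{|V_j|})$ deviations, and an identity-coupling of the percolation and coloring randomness of the two chains keeps the discrepancy $|\alpha_{1,j}(X_{t+i}) - \alpha_{1,j}(Y_{t+i})|$ of order $\hat C_i/\sqrt{|V_j|}$. Since $\ell$ is constant, $\hat C_\ell$ remains bounded and the constant $c$ from Lemma~\ref{lem:basket_coupling} applies uniformly throughout.

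A union bound then yields
\[
\Pr\Big(\bigcap_{i=1}^{\ell} \{A_{t+i} \ne A'_{t+i}\}\Big) \le (1-c)^{\ell} + \ell\cdot \eta \le \frac{1}{8}+\frac{1}{8} = \frac{1}{4},
\]
and complementing gives the claim. The main obstacle I expect lies in the A2-preservation step: unlike A1, which only involves the drift of a single chain, A2 requires coordinating the two chains' percolation and coloring steps so that the partition-wise dominant-spin fractions stay aligned, which will demand an identity-coupling of the dominant component's spin assignment together with a maximal coupling of the smaller components within each $V_j$, using the sub-Gaussian tails of Lemma~\ref{lemma:rg:partition} to control the residual discrepancies.
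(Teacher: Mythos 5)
Your overall plan has the same shape as the paper's proof (retry the one-step coupling of Lemma~\ref{lem:basket_coupling} a constant number of times while showing A1--A2 persist at each step), but two of your key steps have genuine gaps. First, you let the constants grow, $\hat C_i=\hat C_{i-1}+O(1)$, and then assert that ``the constant $c$ from Lemma~\ref{lem:basket_coupling} applies uniformly throughout.'' It does not: the success probability guaranteed by Lemma~\ref{lem:basket_coupling} depends on the constant in A1--A2 (the coupling must match spin-count discrepancies of size $\hat C_i\sqrt{n}$ via the multinomial coupling, and that probability degrades as $\hat C_i$ grows), so with growing constants the per-step success probability decreases in $i$, while you fixed $\ell$ using the success probability at the initial constant. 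This is circular: $\ell$ is chosen from $c$, $c$ must be valid at the constant $\hat C_\ell$, and $\hat C_\ell$ depends on $\ell$. The paper avoids this by proving (Fact~\ref{fact:step:preserv}) that the \emph{same} radius $r/\sqrt n$ is preserved at each step with probability $1-O(r^{-2})$, exploiting strict contraction of the drift ($|F'(a)|<1$ for A1, and the factor $(1-1/q)\theta(\beta\alpha_1)<1$ for the partition discrepancies); with a fixed radius, one success probability applies at every retry and all constants can be chosen consistently at the end.

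Second, your mechanism for preserving A2 is off. You propose to keep $|\alpha_{1,j}(X)-\alpha_{1,j}(Y)|$ aligned by identity-coupling the two chains' percolation and coloring randomness, but at these very steps the chains must be run under the coupling of Lemma~\ref{lem:basket_coupling} (otherwise there is no coalescence attempt), and the two chains are in different configurations, so this coordination is neither available nor needed. The paper's preservation is purely marginal: for each chain separately, a one-step version of Lemma~\ref{lemma:step:concentration_basket} shows $\max_k|\alpha_1-\alpha_{1,k}|\le r/\sqrt n$ is maintained with probability $1-O(r^{-2})$, and since A1 keeps $\alpha_1(X_{t+1})$ and $\alpha_1(Y_{t+1})$ within $O(r/\sqrt n)$ of $a$ and $|V_i|\ge\lambda n$, the inter-chain bound A2 follows by the triangle inequality; marginal bounds are valid under any coupling, which is what makes the retry scheme work. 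Finally, your bound $\Pr(\cap_{i}\{A_{t+i}\neq A'_{t+i}\})\le(1-c)^{\ell}+\ell\eta$ needs the per-step success estimate to hold conditionally on the full history on the event that A1--A2 hold at that time (Markov property); this is repairable, and the paper carries out the conditional bookkeeping explicitly, but the uniform-constant issue and the A2 mechanism above are the substantive gaps you would need to fix.
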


In summary, we have shown that there exists a $T = c(\beta,q) \log n  + O(1)$ such that
$
\|A_{T} - A_{T}'\|_{\textsc{tv}} \le 1/4.
$
It was already noted in~\cite{LNNP,galanis2019swendsen} that the symmetry of the mean-field setting implies that
$\|X_{T}-Y_{T}\| = \|A_{T} - A_{T}'\|_{\textsc{tv}}$ and thus 
$$
\tau_{\rm mix}^{\mathrm{SW}} \le T = c(\beta,q)\log n+ O(1),
$$
which completes the proof of the upper bound in Theorem~\ref{thm:main:intro}.
The lower bound in this theorem follows straightforwardly from
some of the same facts required to establish Lemma~\ref{lemma:main:contract-over}, and it is provided later in Section~\ref{subsec:lb}.

We comment briefly in the organization of the rest of the paper. The proofs of Lemmas~\ref{lemma:constant_distance} and~\ref{lemma:main:contract-over}, as well as that of the mixing time lower bound, are provided in Section~\ref{sec:conv}. 
Section~\ref{sec:sf} contains the proof of Lemma~\ref{lemma:partition:convergence}, and Section~\ref{sec:pc} 
those of Lemmas~\ref{lem:basket_coupling} and \ref{lemma:boosting}.

\section{Convergence to the neighborhood of the majority vector}
\label{sec:conv}

We provide next the the proofs of Lemmas~\ref{lemma:constant_distance} and ~\ref{lemma:main:contract-over}, which ensure that SW dynamics reaches an $O(1/\sqrt{n})$ ball around $m$ after $T = c(\beta,q) \log n + O(1)$ steps. 
For $b \in (0,1)$, let 
$$
\kappa(b) := \Big(b,\frac{1-b}{q-1},\dots,\frac{1-b}{q-1}\Big).
$$
Let $F^k(x) = F^{k-1}(F(x))$ with $F^1 = F$.
We show first that $\alpha(X_t)$ is close to $\kappa(F^t(\alpha_1(X_0)))$ for $t \ge 0$ and then 
that, for an appropriate choice of $t$, $\kappa(F^t(\alpha_1(X_0)))$ is close to $m$.

\begin{lemma}
     \label{lemma:main:contract}   
    Suppose $X_0$ is such that 
    $\|\alpha(X_0)-m\|_{\infty}\le\varepsilon$ for a sufficiently small constant $\varepsilon > 0$. Then, 
    there exists a constant $\hat c(\beta,q) > c(\beta,q)$, such that 
    for any $t \le \hat c(\beta,q) \log n$ and any constant $\delta \in (0,1)$, there exists a constant $C(\delta)>0$ such that with probability at least $\delta$ 
    \begin{equation*}       
        \|\alpha(X_{t})-\kappa(F^{t}(\alpha_1(X_0)))\|_\infty \le \frac{C}{\sqrt{n}}.
    \end{equation*}
\end{lemma}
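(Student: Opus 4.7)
\noindent\textbf{Proof proposal for Lemma~\ref{lemma:main:contract}.}
The plan is to compare the random fraction $x_t:=\alpha_1(X_t)$ with the deterministic iterate $\bar x_t:=F^t(\alpha_1(X_0))$, and to bound the second moment of $e_t:=x_t-\bar x_t$ by $O(1/n)$ uniformly in $t$. Once the dominant coordinate is controlled, a short one-step calculation yields the corresponding bound on the remaining $q-1$ coordinates of $\alpha(X_t)-\kappa(\bar x_t)$.

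First I would carry out a one-step analysis starting from $X_t$ with $\alpha_1(X_t)=x_t$. Decompose an SW step into (i) percolation inside the dominant color class, whose giant component has size $\theta(\beta x_t)x_t n\pm O(\sqrt n)$ by Lemma~\ref{lemma:rg:gc}; and (ii) the assignment of a uniform random color to each resulting component. Combining the susceptibility bound in Lemma~\ref{lemma:rg:sucep} on the non-giant components with Hoeffding's inequality for the coloring, the number of non-giant vertices receiving any fixed color is $(n-\mathrm{giant})/q\pm O(\sqrt n)$ with very high probability. Consequently $x_{t+1}=F(x_t)+\zeta_t$, where the conditional moments of the noise satisfy $\E[\zeta_t^{2}\mid X_t]\le V/n$ and $|\E[\zeta_t\mid X_t]|\le B/\sqrt n$ for constants $V,B$ depending only on $\beta,q$; the same analysis shows that $\alpha_j(X_{t+1})=(1-F(x_t))/(q-1)+O(1/\sqrt n)$ with high probability for every $j\ne 1$.

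By Facts~\ref{fact:df:fp}--\ref{fact:f:d2}, $F$ is smooth with $F'(a)<1$, so I may choose a neighborhood $N_a=(a-\varepsilon',a+\varepsilon')$ of $a$ and a constant $\rho\in(F'(a),1)$ with $|F'|\le\rho$ on $N_a$. Since $F$ is concave with fixed point $a$ on $N_a$, the deterministic iterates $\bar x_t$ stay in $N_a$ and converge monotonically to $a$ provided $\varepsilon$ is chosen small enough that $\alpha_1(X_0)\in N_a$. The mean-value theorem gives $e_{t+1}=F'(\xi_t)e_t+\zeta_t$ with $\xi_t$ between $x_t$ and $\bar x_t$. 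Introduce the stopping time $\tau:=\min\{t:x_t\notin N_a\}$, so that $|F'(\xi_t)|\le\rho$ on $\{\tau>t\}$. Squaring the recurrence and using AM--GM to absorb the cross term, in which the $O(1/\sqrt n)$ bias $\E[\zeta_t\mid X_t]$ multiplies $|e_t|$, into $\eta\,e_t^{2}+O(1/n)$ with $\eta$ chosen so that $\rho^{2}+\eta<1$, and then taking expectations, yields
\[
\E\!\bigl[e_{t+1}^{2}\,\mathbf{1}_{\{\tau>t+1\}}\bigr]\ \le\ \rho''\,\E\!\bigl[e_{t}^{2}\,\mathbf{1}_{\{\tau>t\}}\bigr]+V'/n,
\]
for constants $\rho''\in(0,1)$ and $V'>0$. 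Since $e_0=0$, iteration yields $\E[e_t^{2}\,\mathbf{1}_{\{\tau>t\}}]\le V'/((1-\rho'')n)$ \emph{uniformly in $t$}. Chebyshev's inequality then gives $\Pr(|e_t|\ge C/\sqrt n,\,\tau>t)\le V'/((1-\rho'')C^{2})$, and the same estimate applied at each step $s\le t$ together with a union bound gives $\Pr(\tau\le t)=O(t/n)$, which is $o(1)$ in the $t=O(\log n)$ regime relevant to Lemma~\ref{lemma:main:contract-over}. Choosing $C=C(\delta)$ large enough and combining with the one-step bound for the non-dominant coordinates, which gives $\alpha_j(X_t)=(1-\bar x_t)/(q-1)+O(|e_{t-1}|+1/\sqrt n)$, completes the proof.

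The main obstacle is the \emph{amortization} of the per-step noise. A naive union bound on the sub-Gaussian tails $\Pr(|\zeta_k|\ge A/\sqrt n)\le Ce^{-cA^{2}}$ over $k\le t$ picks up a $\sqrt{\log t}$ factor and delivers only $O(\sqrt{\log t}/\sqrt n)$ on $|e_t|$; the second-moment analysis above instead exploits the exponential-in-$k$ decay produced by the contraction $\rho<1$, so that variance rather than worst-case deviation drives the bound, yielding the sought $O(1/\sqrt n)$ that is uniform in $t$. A secondary difficulty is that contraction is only available while $x_s\in N_a$; since a direct union bound over noises suffers the same $\sqrt{\log t}$ loss, the confinement of $x_s$ has to be controlled alongside the second-moment recurrence itself, via the stopping time $\tau$.
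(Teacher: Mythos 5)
Your proposal is correct in substance but proves the lemma by a genuinely different mechanism than the paper. The paper first establishes a per-step concentration bound (Lemma~\ref{lemma:step:concentration}), with tails $e^{-\Omega(r)}+O(r^{-2})$ coming from Lemma~\ref{lemma:rg:gc}, Lemma~\ref{lemma:rg:sucep} and Hoeffding, and then runs a pathwise argument: iterating the Taylor expansion of $F$ gives $|\alpha_1(X_t)-F^t(\alpha_1(X_0))|\le \frac1n\sum_i(1-\rho)^{t-i}\xi_i$, and the amortization is done by union-bounding against thresholds $r_i=K\big((1-\rho/2)(1+\rho)\big)^{t-i}$ that grow geometrically \emph{backwards} in time, so the weighted sum stays $O(\sqrt n)$ while $\sum_i O(r_i^{-2})$ remains a constant, uniformly in $t$. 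You instead exploit the same contraction through a conditional second-moment recursion $\E[e_{t+1}^2\mathbf 1_{\tau>t+1}]\le\rho''\,\E[e_t^2\mathbf 1_{\tau>t}]+V'/n$, using that $e_t$ and $F'(\xi_t)$ are $\mathcal F_t$-measurable so only the $O(1/\sqrt n)$ bias enters the cross term, and finish with Chebyshev; this needs only conditional mean/variance bounds on the one-step noise (which do follow from Lemmas~\ref{lemma:rg:gc} and~\ref{lemma:rg:sucep} by integrating the tails — spell this out, since Lemma~\ref{lemma:rg:gc} is stated as a tail bound, not a moment bound), rather than explicit tail estimates, and it makes the amortization transparent. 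The price is your stopping-time treatment of confinement, which costs $\Pr(\tau\le t)=O(t/n)$ and hence proves the statement only for $t=o(n)$ rather than literally ``for any $t\ge0$''; this is harmless for the paper's applications at $t=c(\beta,q)\log n+O(1)$ (and the paper's own derivation also implicitly relies on confinement that its good event only guarantees in that regime), but you should state the restriction, or note that for larger $t$ a separate argument is needed. With the moment bounds verified and the non-dominant coordinates handled as you indicate (via $|F(x_{t-1})-F(\bar x_{t-1})|\le\rho|e_{t-1}|$ plus a one-step Chebyshev bound), your argument yields the same conclusion with the same constants' structure.
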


\begin{lemma}
    \label{lemma:f:fp-convergence}
    Suppose $x_0 \in (a-\varepsilon,a+\varepsilon)$ where $\varepsilon > 0$ is small enough and let $\gamma \in \R$. Then, if $T = c(\beta,q){\log n} + \gamma$, there exist constants $C_1 = C_1(\beta,q,\varepsilon)>0$ and $C_2 = C_2(\beta,q,\varepsilon)>0$ such that
    $$\frac{|x_0-a| F'(a)^\gamma C_1}{\sqrt{n}} \le |F^T(x_0) - a| \le \frac{|x_0-a|F'(a)^\gamma C_2 }{\sqrt{n}}.$$
\end{lemma}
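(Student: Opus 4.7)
The strategy is to linearize $F$ around its unique fixed point $a$ and control the accumulated nonlinear error across the $T=\Theta(\log n)$ iterations. Set $e_t:=F^t(x_0)-a$, so $e_0=x_0-a$ with $|e_0|<\varepsilon$. Since $F(a)=a$ and $|F''|\le \Delta$ uniformly (Fact~\ref{fact:f:d2}), Taylor's theorem gives
\begin{equation*}
e_{t+1}=F'(a)\,e_t+R_t,\qquad |R_t|\le \tfrac{\Delta}{2}\,e_t^2,
\end{equation*}
and Facts~\ref{fact:df:fp} and~\ref{fact:f:d:exp} ensure $F'(a)\in(0,1)$.

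The first step is to verify stability of the iterates. For $\varepsilon$ small enough there exists $\rho\in(F'(a),1)$ with $|F(a+y)-a|\le \rho|y|$ whenever $|y|\le\varepsilon$, so by induction $|e_t|\le\rho^t|e_0|\le\varepsilon$ for every $t\ge 0$. Next, I would unroll the one-step recursion to obtain
\begin{equation*}
e_T=F'(a)^T\,e_0+\sum_{t=0}^{T-1}F'(a)^{T-1-t}\,R_t,
\end{equation*}
and use $|R_t|\le \tfrac{\Delta}{2}\rho^{2t}e_0^2$ to bound the remainder by $\tfrac{\Delta}{2}\,e_0^2\,F'(a)^{T-1}\sum_{t=0}^{T-1}(\rho^2/F'(a))^t$. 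By shrinking $\varepsilon$, the ratio $\rho^2/F'(a)$ can be forced strictly below $1$ (possible because $F'(a)^2<F'(a)$), so the geometric sum is uniformly bounded by a constant, yielding
\begin{equation*}
\big|e_T-F'(a)^T e_0\big|=O\big(e_0^2\,F'(a)^T\big).
\end{equation*}

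To finish, I compute the leading term. The choice $c(\beta,q)=(2\log(1/F'(a)))^{-1}$ from~\eqref{eq:const} gives $F'(a)^{c(\beta,q)\log n}=e^{-\tfrac{1}{2}\log n}=n^{-1/2}$, so $F'(a)^T=F'(a)^\gamma\,n^{-1/2}$. Setting $C(\gamma):=F'(a)^\gamma>0$, the main term equals $C(\gamma)(x_0-a)/\sqrt{n}$ and the remainder is $O(e_0^2/\sqrt{n})$, which is of smaller order than $C(\gamma)|x_0-a|/\sqrt{n}$ once $\varepsilon$ is small. The two-sided inequality in the lemma then follows from the reverse triangle inequality $\big||e_T|-F'(a)^T|e_0|\big|\le|e_T-F'(a)^T e_0|$ combined with $F'(a)>0$.

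The main delicacy is preventing the quadratic remainders from accumulating over $\Theta(\log n)$ iterations. A crude bound would sum $T$ terms each of order $F'(a)^T e_0^2$ and produce a spurious $\log n$ factor, enough to shift the leading asymptotic and break the matching with the lower bound proved in Section~\ref{subsec:lb}. The crux is exploiting that $|e_t|$ contracts geometrically at rate $\rho$ close to $F'(a)$, so the squared ratio $\rho^2/F'(a)$ sits strictly inside the unit interval; this makes the error series convergent to a constant multiple of its first term and preserves the exact leading asymptotic $F'(a)^T(x_0-a)$ that drives the cutoff constant.
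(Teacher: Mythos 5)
Your proposal is correct and follows essentially the same route as the paper: Taylor-expand $F$ at the fixed point $a$ with the uniform second-derivative bound of Fact~\ref{fact:f:d2}, unroll $e_{t+1}=F'(a)e_t+R_t$ over $T$ steps, control the accumulated quadratic remainders by the a priori geometric contraction, and use $F'(a)^{T}=F'(a)^{\gamma}n^{-1/2}$ from~\eqref{eq:const} to identify $C(\gamma)=F'(a)^{\gamma}$. The one nuance is that your remainder bound $O\big(e_0^2 F'(a)^T\big)$ gives a multiplicative $(1\pm O(\varepsilon))$ correction to the main term rather than a literal additive $o(1/\sqrt{n})$; this is in fact all that the paper's own computation delivers as well (its choice of $\varepsilon_0$ with $\rho<(\rho+\varepsilon_0)^2$ sits in tension with its claim that $(\rho+\varepsilon_0)^{2T}=o(1/\sqrt{n})$, whereas your condition $\rho^2<F'(a)$ handles the error series in the cleaner direction), and it suffices for every use of the lemma, namely Lemma~\ref{lemma:main:contract-over} and the mixing-time lower bound in Section~\ref{subsec:lb}.
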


A key step in the proof of Lemma~\ref{lemma:main:contract}, which is also useful in the proof of Lemma~\ref{lemma:constant_distance} is the following concentration guarantee on the fluctuations.
\begin{lemma}
    \label{lemma:step:concentration}
    Suppose $X_t$ is such that 
    $\|\alpha(X_t)-m\|_{\infty} \le \varepsilon$ for a sufficiently small $\varepsilon > 0$. Then, there exists $\gamma_0(\beta)$ such that for any $r \ge \gamma_0(\beta)$
    \begin{equation*}
        \Pr\Big(\|\alpha(X_{t+1})-\kappa(F(\alpha_1(X_{t})))\|_{\infty} \ge \frac{r}{\sqrt{n}}\Big)\le e^{-\Omega(r)}+O(r^{-2}).
    \end{equation*}
\end{lemma}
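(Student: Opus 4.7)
The plan is to decompose one step of the SW dynamics into its percolation and coloring sub-steps, control the randomness of each, and combine the resulting bounds via triangle inequality and a union bound.

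First I would condition on $X_t$ and use that the percolation step acts independently on the $q$ color classes: the induced subgraph on the $\alpha_i(X_t) n$ vertices of color $i$ is distributed as $\mathcal{G}_i := \mathcal{G}(\alpha_i n, \beta/n) \sim \mathcal{G}(\alpha_i n, \lambda_i/(\alpha_i n))$ with $\lambda_i = \beta \alpha_i$. Since $\beta > q$ and $\alpha_1 > 1/q$, the dominant class $\mathcal{G}_1$ is supercritical ($\lambda_1 > 1$), while for $\varepsilon$ small enough the non-dominant classes $\mathcal{G}_i$, $i \neq 1$, are subcritical (a consequence of the fixed-point equation for $a$ when $\beta > \bS = q$). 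Let $\mathcal{L}$ denote the giant of $\mathcal{G}_1$. By Lemma~\ref{lemma:rg:gc}, $\bigl||\mathcal{L}| - \theta(\beta \alpha_1) \alpha_1 n\bigr| \le r\sqrt{n}/(4q)$ except on an event of probability $O(e^{-\Omega(r^2)})$. In the coloring step, let $Z \in [q]$ be the uniform color assigned to $\mathcal{L}$, and for each $c \in [q]$ let $S_c = \sum_K |K|\,\mathbf{1}[K \text{ gets color } c]$, where $K$ ranges over all non-giant components across all color classes; thus $\alpha_c(X_{t+1})\, n = |\mathcal{L}|\,\mathbf{1}[Z=c] + S_c$.

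Conditionally on the percolation, $S_c$ has mean $(n - |\mathcal{L}|)/q$ and variance $\tfrac{q-1}{q^2}\sum_K |K|^2$. The sum $\sum_K |K|^2$ is precisely what Lemma~\ref{lemma:rg:sucep} controls: part (2) bounds the non-giant contribution from the supercritical $\mathcal{G}_1$ and part (1) bounds each subcritical $\mathcal{G}_i$ for $i \neq 1$; using independence across color classes and Chebyshev, one obtains $\sum_K |K|^2 = O(n)$ except on an event of probability $O(1/r^2)$. Conditional Chebyshev then yields $|S_c - (n - |\mathcal{L}|)/q| \le r\sqrt{n}/4$ except on an event of probability $O(1/r^2)$. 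A brief triangle-inequality computation, using the identities $F(\alpha_1) = \theta(\beta\alpha_1)\alpha_1 + (1 - \theta(\beta\alpha_1)\alpha_1)/q$ and $(1 - F(\alpha_1))/(q-1) = (1 - \theta(\beta \alpha_1)\alpha_1)/q$, yields $|\alpha_Z(X_{t+1}) - F(\alpha_1)| \le r/\sqrt{n}$ and, for $c \neq Z$, $|\alpha_c(X_{t+1}) - (1-F(\alpha_1))/(q-1)| \le r/\sqrt{n}$. Since $F(\alpha_1) - (1-F(\alpha_1))/(q-1)$ is a positive constant near the fixed point $a > 1/q$, the giant's color $Z$ is the dominant color of $X_{t+1}$ on this good event, so these coordinate-wise bounds imply $\|\alpha(X_{t+1}) - \kappa(F(\alpha_1))\|_\infty \le r/\sqrt{n}$. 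A union bound over the $q+1$ controlling events yields a total failure probability of at most $e^{-\Omega(r^2)} + O(1/r^2) \le e^{-\Omega(r)} + O(r^{-2})$.

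The main obstacle is controlling the conditional variance of $S_c$ through $\sum_K |K|^2$: the supercritical class $\mathcal{G}_1$ forces us to exclude its giant from the sum (otherwise $\sum_K |K|^2 = \Theta(n^2)$) and to handle it separately via Lemma~\ref{lemma:rg:gc}, while the non-giant contribution from $\mathcal{G}_1$ requires the supercritical part of Lemma~\ref{lemma:rg:sucep}. A minor but essential point is to verify that the non-dominant classes are strictly subcritical when $\alpha_i \approx (1-a)/(q-1)$ and $\beta > q$; this can be extracted from the fixed-point equation $F(a) = a$ together with the defining relation $e^{-\beta a \theta(\beta a)} = 1 - \theta(\beta a)$, which forces $\beta(1-a)/(q-1) < 1$ throughout the regime $\beta > q$.
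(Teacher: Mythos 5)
Your proposal is correct and follows essentially the same route as the paper: split the step into percolation and coloring, control the giant via Lemma~\ref{lemma:rg:gc}, control $\sum_{K}|K|^2$ over the non-giant components via Lemma~\ref{lemma:rg:sucep} and Chebyshev, and then bound the coloring fluctuations, yielding the same $e^{-\Omega(r)}+O(r^{-2})$ form. The only cosmetic difference is that you bound the coloring fluctuations by a conditional Chebyshev argument (using the variance $\tfrac{q-1}{q^2}\sum_K|K|^2$), whereas the paper conditions on the sum of squares being $O(n)$ and applies Hoeffding; both are absorbed by the $O(r^{-2})$ term, and your explicit verification that the giant's color is the dominant spin of $X_{t+1}$, and that $\beta(1-a)/(q-1)<1$ follows from the fixed-point relation, fills in details the paper leaves implicit.
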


Before providing the proofs of these lemmas, 
we use them to prove Lemmas~\ref{lemma:constant_distance} and ~\ref{lemma:main:contract-over}.

\begin{proof}[Proof Lemma~\ref{lemma:constant_distance}]
    From Lemma 30 in~\cite{galanis2019swendsen}, we have that for any constant $\varepsilon>0$ and any starting state $X_0 \in \Omega$, there exists $T=O(1)$ such that after $T$ steps, we have that
    $\|\alpha(X_{T})-m\|_\infty\le \varepsilon$
    with probability at least $\gamma = \Omega(1)$.
    Running the dynamics for $kT$ steps, ensures the probability that we do not reach a configuration 
    $X$ such that $\|\alpha(X)-m\|_\infty\le \varepsilon$
    after $kT$ steps is at most $(1-\gamma)^k$.

    To complete the proof, we 
    we show that once we reach a configuration $\|\alpha(X_{t})-m\|_\infty\le \varepsilon$, then the same is true $X_{t+1}$ with high probability provided $\varepsilon$ is small enough. The result then follows from a union bound over the steps.
    If $\|\alpha(X_{t})-m\|_\infty\le \varepsilon$, then $\alpha_1(X_t) \in [a-\varepsilon,a+\varepsilon]$. 
    By Fact~\ref{fact:df:fp}, for $\varepsilon$ small enough there exists a constant $\rho \in (0,1)$ such that
    $|F(a+\varepsilon) - a| \le \rho \varepsilon$ 
    and $|F(a-\varepsilon) - a| \le \rho \varepsilon$.
    Since $F$ is increasing and concave in $(1/q,1)$ (see Fact~\ref{fact:f:inc}), 
    this implies that $(a+\varepsilon) - F(a+\varepsilon) \ge  (1-\rho)\varepsilon$ and
    $F(a-\varepsilon) - (a-\varepsilon) \ge (1-\rho)\varepsilon$. 
    Thus, in order for $\|\alpha(X_{t+1}) - m\|_\infty > \varepsilon$, we would need 
    $|\alpha_1(X_{t+1}) - F(\alpha_1(X_{t}))| \ge (1-\rho)\varepsilon$,
    which implies that 
    $\|\alpha(X_{t+1}) - \kappa(F(\alpha_1(X_{t})))\|_\infty \ge (1-\rho)\varepsilon$. 
    Then, from Lemma~\ref{lemma:step:concentration} we have that
    $$
       \Pr\Big(\|\alpha(X_{t+1})-\kappa(F(\alpha_1(X_{t})))\|_{\infty} \ge (1-\rho)\varepsilon \Big)\le O\big(n^{-1}\big).
     $$
\end{proof}

\begin{proof}[Proof of Lemma~\ref{lemma:main:contract-over}]
    Follows from~Lemmas~\ref{lemma:main:contract} and~\ref{lemma:f:fp-convergence} and the triangle inequality.
\end{proof}

\subsection{Proofs of Lemmas~\ref{lemma:main:contract}, ~\ref{lemma:f:fp-convergence}, and ~\ref{lemma:step:concentration}}

We provide first the proof of and Lemma~\ref{lemma:step:concentration}.

\begin{proof}[Proof of Lemma~\ref{lemma:step:concentration}]
Since $\beta > q$ and $a > 1/q$ by Fact~\ref{fact:df:fp}, $a \beta > 1$ and $\frac{1-a}{q-1} \beta < 1$. By assumption 
$\|\alpha(X_t)-m\|_{\infty}\le\varepsilon$, so when $\varepsilon$ is small enough, exactly one color class in the percolation step of the SW dynamics is supercritical, and the remaining $q-1$ color classes are subcritical.

Let $C_1, C_2, \dots$ denote the connected components after the percolation step from $X_t$, sorted in decreasing order by size (breaking ties arbitrarily) and let $R = \sum_{i\ge 2}|C_i|^2$.
By~Lemma~\ref{lemma:rg:sucep} there exist constants $\gamma_0,\gamma_1 > 0$, that depend only on $\beta$ and $q$, such that $\E[R] = \gamma_0 n + O(1)$ and
$
\Var[R] \le \gamma_1n.
$
Then, by Chebyshev's inequality, for any $\gamma_2 \ge 1$ and $n$ large we have
\begin{equation}
\Pr\left(\left|R-\gamma_0n\right| \ge \gamma_2\sqrt{\gamma_1 n}\right) 
\le
\Pr\left(\left|R-\E[R]\right| \ge \frac{\gamma_2}2\sqrt{\gamma_1 n}\right) 
\le \frac{4}{\gamma_2^2}.
\label{eq:gam1}
\end{equation}

For $j \ge 2$ and $i \in [q]$,
consider the random variables $Q_j^{(i)}$ where
\[
    Q_j^{(i)}= \begin{cases}
    |C_j| & \text{if }C_j\text{ colored }i,~\text{and}\\
    0 & \text{otherwise}.\\
    \end{cases}
\]
Let $Z_i = \sum_{j \ge 2} Q_j^{(i)}$. By Hoeffding's inequality, for any $\gamma_3 \ge 0$
\begin{equation*}
\Pr\Big(|Z_i-E[Z_i]| \ge \gamma_3\sqrt{n} \mid R \le \gamma_0n + \gamma_2\sqrt{\gamma_1n} \Big)\le 2\exp\Big(\frac{-2 \gamma_3^2}{\gamma_0+\gamma_2\sqrt{\gamma_1/n}}\Big),
\end{equation*}
and combined with~\eqref{eq:gam1} we deduce that
$$
\Pr\big[|Z_i-E[Z_i]| \ge \gamma_3\sqrt{n} \big]\le  2\exp\Big(\frac{-\gamma_3^2}{\gamma_0+\gamma_2\sqrt{\gamma_1/n}}\Big)  +\frac{4}{\gamma_2^2}.
$$

Turning our attention to $|C_1|$, by Lemma~\ref{lemma:rg:gc}, there are exists constants $c_1,c_2 > 0$ such that for any $\gamma_4 > 0$
\begin{equation}
    \Pr(||C_1|-\theta(\beta \alpha_1(X_t))\alpha_1(X_t)n|\ge\gamma_4\sqrt{n})\le c_1 \exp(-c_2 \gamma_4^2).
    \label{eq:gam4}
\end{equation}
If 
%$C_1$ is assigned color $i$, 
$||C_1|-\theta(\beta \alpha_1(X_t))\alpha_1(X_t)n| <\gamma_4\sqrt{n}$, and $|Z_j-E[Z_j]|<\gamma_3\sqrt{n}$ for all $i \in [q]$, we have
 \begin{align*}
 \alpha_1(X_{t+1})n &\le \theta(\beta \alpha_1(X_t))\alpha_1(X_t)n + \frac{n -  \theta(\beta \alpha_1(X_t))\alpha_1(X_t)n}{q}+  (\gamma_3+\gamma_4)\sqrt{n} = F(\alpha_1(X_t))n+ (\gamma_3+\gamma_4)\sqrt{n},  
 \end{align*}
 and similarly
 $$
 \alpha_1(X_{t+1})n \ge F(\alpha_1(X_t))n- (\gamma_3+\gamma_4)\sqrt{n}.
 $$
 In addition, for $j \neq 1$
\begin{align*}
 %\frac{n-\theta(\beta \alpha_1(X_t))\alpha_1(X_t)n}{q} - (\gamma_3+\gamma_4)\sqrt{n} \le  
 \alpha_j(X_{t+1})n  &\le \frac{n-\theta(\beta \alpha_1(X_t))\alpha_1(X_t)n}{q} +(\gamma_3+\gamma_4)\sqrt{n} 
 \le \frac{(1 - F(\alpha_1(X_t)))n}{q-1}+(\gamma_3+\gamma_4)\sqrt{n}, 
\end{align*}
and 
\begin{align*}
 %\frac{n-\theta(\beta \alpha_1(X_t))\alpha_1(X_t)n}{q} - (\gamma_3+\gamma_4)\sqrt{n} \le  
 \alpha_j(X_{t+1})n  \ge \frac{(1 - F(\alpha_1(X_t)))n}{q-1}-(\gamma_3+\gamma_4)\sqrt{n}.
\end{align*}

%Therefore, $\alpha_1(X_{t+1}) = \alpha_i(X_{t+1})n$\antonio{Notation issue.} for $n$ large enough. 
Combining these facts via a union bound, and setting $\gamma_2 = \gamma_3 = \gamma_4 = r/2$, it follows that for $r \ge \gamma_0$ and $n$ large enough that 
\begin{align*}
 \Pr\Big(\|\alpha(X_{t+1})-\kappa(F(\alpha_1(X_{t})))\|_{\infty} \ge \frac{r}{\sqrt{n}} \Big) 
 &\le c_1 \exp(-c_2 \gamma_4^2) +  2q\exp\Big(\frac{-\gamma_3^2}{\gamma_0+\gamma_2\sqrt{\gamma_1/n}}\Big)  +\frac{4q}{\gamma_2^2} \\
 &\le \exp(-\Omega(r)) + O(r^{-2})
 \end{align*}
 as claimed.
\end{proof}

We are now ready to provide the proof of Lemma~\ref{lemma:main:contract}.

\begin{proof}[Proof of Lemma~\ref{lemma:main:contract}]
    Let
    \begin{equation}
    \label{eq:xidef}
        \xi_t=n \cdot \|\alpha(X_t)-\kappa(F(\alpha_1(X_{t-1})))\|_\infty.
    \end{equation}
    By Fact~\ref{fact:df:fp}, $|F'(a)| < 1 $, so for small enough $\varepsilon$, there exists $\rho \in (0,1)$ such that
    $1-\rho = \max_{|x-a|<2\varepsilon}|F'(x)|$.
    Let $K > 0$ be a large constant we will choose later and let us define the sequence
    \begin{equation}
        r_i=K\big(1-{\rho}/{2}\big)^{t-i}\big(1+\rho\big)^{t-i}.
    \end{equation}
    Our first observation is that when $\xi_i <  r_i\sqrt{n}$ for all $i \le t$, we have  
    \begin{equation}
    \label{eq:taylor_prep}
    |F^{t-j-1}(\alpha_1(X_j))-a| < 2\varepsilon
    \end{equation}
    for all $j \le t-1$.
    To see this, note
    that since $F$ is increasing, concave and have a unique fixed point in $[1/q,1]$
    (see Facts~\ref{fact:df:fp} and~\ref{fact:f:inc})
    we have that for any $y \in [1/q,1]$
    \begin{equation}
        \label{eq:ineq:c}
        |F(y)-a| \le |y - a|.
    \end{equation}
    Hence,
    \begin{align*}
        |F^{t-j-1}(\alpha_1(X_j))-a| \le |\alpha_1(X_j)-a|,  
    \end{align*}
    and from \eqref{eq:xidef}  and the triangle inequality it follows that
       \begin{align*}
        |\alpha_1(X_j)-a| \le |F(\alpha_1(X_{j-1}))-a| + \frac{\xi_j}{n}.
    \end{align*}
    Using~\eqref{eq:ineq:c} again and combining these inequalities we conclude that:
    \begin{align*}
         |F^{t-j-1}(\alpha_1(X_j))-a| \le |\alpha_1(X_{j-1})-a| + \frac{\xi_j}{n}.  
    \end{align*}    
    Iterating this process:
    $$
    |F^{t-j-1}(\alpha_1(X_j))-a| \le |\alpha_1(X_0) - a| + \frac{1}{n}\sum_{i=1}^j \xi_i \le \varepsilon + \frac{1}{\sqrt{n}}\sum_{i=1}^j r_i.
    $$
    Observe that for a suitable constant $K' > 0$,
    $$
    \frac{1}{\sqrt{n}}\sum_{i=1}^j r_i =  \frac{K}{\sqrt{n}}\sum_{i=1}^t \big(1-{\rho}/{2}\big)^{t-i}\big(1+\rho\big)^{t-i} \le \frac{K' (\big(1-{\rho}/{2}\big)\big(1+\rho\big))^{t}}{\sqrt{n}}.
    $$
    So, when $t$ is such that $(\big(1-{\rho}/{2}\big)\big(1+\rho\big))^{t} 
    \le n^{\frac{1}{2}-\gamma}$ for a constant $\gamma \in (1/4,1/2)$, we have
    $$
    \frac{1}{\sqrt{n}}\sum_{i=1}^j r_i = o(1),
    $$ and thus
    $
    |F^{t-j-1}(\alpha_1(X_j))-a| \le \varepsilon + o(1) \le 2\varepsilon
    $
    for $n$ large which establishes~\eqref{eq:taylor_prep}.
    Observe that 
    $$(\big(1-{\rho}/{2}\big)\big(1+\rho\big))^{t} 
    \le n^{\frac{1}{2}-\gamma}$$
    when $t \le \frac{\frac{1}{2} - \gamma}{\log ((1-{\rho}/{2})(1+\rho))} \log n$, and it can be checked that 
    $$
    \frac{\frac{1}{2} - \gamma}{\log ((1-{\rho}/{2})(1+\rho))} > c(\beta,q) = \frac{1}{2 \log \frac{1}{F'(a)}}.
    $$
    
    Now, going back to~\eqref{eq:xidef},
    \begin{align*}
         F(\alpha_1(X_{t-1}))-\frac{\xi_{t}}{n} \le \alpha_1(X_{t})
           \le F(\alpha_1(X_{t-1}))+\frac{\xi_{t}}{n}.
    \end{align*}
    Similarly, 
   \begin{equation} \label{eq:c1}
    F(\alpha_1(X_{t-2}))-\frac{\xi_{t-1}}{n} \le \alpha_1(X_{t-1})
           \le F(\alpha_1(X_{t-2}))+\frac{\xi_{t-1}}{n},
    \end{equation}
    and since $F$ is increasing in $[1/q,1]$ (see Fact~\ref{fact:f:inc}), we have
    $$
   F\Big(F(\alpha_1(X_{t-2}))-\frac{\xi_{t-1}}{n}\Big) - \frac{\xi_{t}}{n} \le \alpha_1(X_{t})
           \le F\Big(F(\alpha_1(X_{t-2}))+\frac{\xi_{t-1}}{n}\Big) + \frac{\xi_{t}}{n}.       
    $$
    Since from~\eqref{eq:taylor_prep} we know that
    $F(\alpha_1(X_{t-2})) \in (a-2\varepsilon,a+2\varepsilon)$.     
    Hence, the Taylor expansion of the function $F$ about $F(\alpha_1(X_{t-2}))$ yields that 
    
    \begin{equation}
    \label{eq:c2}
     F\Big(F(\alpha_1(X_{t-2}))+\frac{\xi_{t-1}}{n}\Big) \le F(F(\alpha_1(X_{t-2}))) +  \frac{(1-\rho) \xi_{t-1}}{n},
    \end{equation}
    and
    \begin{equation}
     \label{eq:c3}
    F\Big(F(\alpha_1(X_{t-2}))-\frac{\xi_{t-1}}{n}\Big) \ge F(F(\alpha_1(X_{t-2}))) -  \frac{(1-\rho) \xi_{t-1}}{n}.
    \end{equation}
    Therefore,
    \begin{equation*}
         F(F(\alpha_1(X_{t-2}))) -  \frac{(1-\rho) \xi_{t-1}}{n} - \frac{\xi_{t}}{n} \le \alpha_1(X_t) \le F(F(\alpha_1(X_{t-2}))) +  \frac{(1-\rho) \xi_{t-1}}{n} + \frac{\xi_{t}}{n}.
    \end{equation*}
    Iterating this argument, making use of ~\eqref{eq:taylor_prep} in each step, we deduce that
    \begin{equation}
    \label{eq:c4}
        F^t(\alpha_1(X_0)) - \frac{1}{n}\sum_{i=1}^{t}(1-\rho)^{t-i}\xi_{i} \le \alpha_1(X_t) \le F^t(\alpha_1(X_0)) + \frac{1}{n}\sum_{i=1}^{t}(1-\rho)^{t-i}\xi_{i},
    \end{equation}
    so that 
    \begin{equation}
    \label{eq:c0}
        |\alpha_1(X_t)-F^t(\alpha_1(X_0))| \le \frac{1}{n} \sum_{i=1}^{t}(1-\rho)^{t-i}\xi_{i}.
   \end{equation}
    In a similar fashion, from~\eqref{eq:xidef} for $j \neq 1$
    \begin{align*}
         \frac{1-F(\alpha_1(X_{t-1}))}{q-1}-\frac{\xi_{t}}{n} \le \alpha_j(X_{t})
           \le \frac{1-F(\alpha_1(X_{t-1}))}{q-1}+\frac{\xi_{t}}{n},
    \end{align*}
    and from~\eqref{eq:c1} since $F$ is increasing
    $$
      \frac{1-F\big(F(\alpha_1(X_{t-2}))+\frac{\xi_{t-1}}{n}\big)}{q-1}-\frac{\xi_{t}}{n} \le \alpha_j(X_{t})
           \le \frac{1-F\big(F(\alpha_1(X_{t-2}))+\frac{\xi_{t-1}}{n}\big)}{q-1}+\frac{\xi_{t}}{n}.
    $$
    From~\eqref{eq:c2} and~\eqref{eq:c3}, we then deduce that
    $$
       \frac{1-F^2\big(\alpha_1(X_{t-2}))}{q-1} -\frac{(1-\rho)\xi_{t-1}}{(q-1)n}- \frac{\xi_{t}}{n} \le \alpha_j(X_{t})
           \le \frac{1-F^2\big(\alpha_1(X_{t-2}))}{q-1} +\frac{(1-\rho)\xi_{t-1}}{(q-1)n}+ \frac{\xi_{t}}{n},
    $$
    and iterating as in~\eqref{eq:c4}:
    \begin{equation}
    \label{eq:c5}
    \left|\alpha_j(X_t) - \frac{1-F^t\big(\alpha_1(X_{t-2}))}{q-1}\right| \le \frac{1}{(q-1)n} \sum_{i=1}^{t}(1-\rho)^{t-i}\xi_{i}.
    \end{equation}
    From~\eqref{eq:c0} and~\eqref{eq:c5}, it suffices to show that
    for a suitable constant $C >0$, with probability at least $\delta \in (0,1)$
    \[
        \sum_{i=1}^{t}(1-\rho)^{t-i}\xi_{i}\le C\sqrt{n}.
    \]
    When $\xi_i <  r_i\sqrt{n}$ for all $i \le t$, then
    $$
    \sum_{i=1}^{t}(1-\rho)^{t-i}\xi_{i}\le     K\sqrt{n} \sum_{i=1}^{t}(1-\rho)^{t-i} \big(1-{\rho}/{2}\big)^{t-i}\big(1+\rho\big)^{t-i} \le C\sqrt{n}
    $$
    for large enough $C$.
    By Lemma~\ref{lemma:step:concentration}, we can choose $K$ large enough such that
    \begin{equation*}
        \sum_{i=1}^t\Pr(\xi_i\ge r_i\sqrt{n})\le\sum_{i=1}^tO(r_i^{-2})\le \delta.
    \end{equation*}
    It then follows from a union bound that 
    \begin{equation*}
        \Pr\Big(\sum_{i=1}^{t}\xi_i(1-\rho)^{t-i}\ge C\sqrt{n}\Big)
        %\le 1-\Pr(\cap{\xi_i\le r_i\sqrt{n}}\Big) 
        \le \sum_{i=1}^{t} \Pr(\xi_i > r_i\sqrt{n}) \le \delta,
        % ? \le\sum\Pr(\neg(\xi_i\le r_i\sqrt{n}))\le\sum\Pr(\xi_i\ge r_i\sqrt{n})
    \end{equation*}
    as claimed.
\end{proof}

We complete this proof by proving the proof of Lemma~\ref{lemma:f:fp-convergence}.

\begin{proof}[Proof of Lemma~\ref{lemma:f:fp-convergence}]
   For ease of notation,  set $|F'(a)| = \eta < 1$. By Fact~\ref{fact:f:d2}, there exists $\Delta \ge |F''(x)|$ for all $x \in (a-\varepsilon,a+\varepsilon)$.
   Let $d_t = |a-F^t(x_0)|$. The Taylor expansion of $F$ about $a$ implies that
   \begin{equation}
    \label{eq:f:taylor}
        \eta d_{t-1} - \frac{\Delta}{2}d_{t-1}^2 \le d_{t} \le \eta d_{t-1} + \frac{\Delta}{2}d_{t-1}^2.
   \end{equation}
   Iterating,
   $$
   \eta^T d_{0} - \frac{\Delta}{2\eta} \sum_{i=1}^T \eta^id_{T-i}^2 \le d_T \le \eta^T d_{0} + \frac{\Delta}{2\eta} \sum_{i=1}^T \eta^id_{T-i}^2.
   $$
   Since $d_t \le \varepsilon$ for $t \ge 0$, for $\varepsilon$ small enough, we can deduce from~\eqref{eq:f:taylor} that 
   $$
   \big(\eta-\frac{\Delta\varepsilon}{2}\big)d_{t-1} \le d_t \le \big(\eta+\frac{\Delta\varepsilon}{2}\big)d_{t-1},$$ and thus
   \begin{align*}
   \frac{d_0 \eta^\gamma}{\sqrt{n}} - \frac{\Delta d_0^2(\eta-\frac{\Delta\varepsilon}{2})^{2T}}{2\eta} \sum_{i=1}^T \Big(\frac{\eta}{(\eta-\frac{\Delta\varepsilon}{2})^2}\Big)^i \le d_T 
    \le \frac{d_0 \eta^\gamma}{\sqrt{n}} + \frac{\Delta d_0^2(\eta+\frac{\Delta\varepsilon}{2})^{2T}}{2\eta} \sum_{i=1}^T \Big(\frac{\eta}{(\eta+\frac{\Delta\varepsilon}{2})^2}\Big)^i.
   \end{align*}   
   For $\varepsilon$ small enough, $\eta > (\eta + \frac{\Delta\varepsilon}{2})^2 >  (\eta - \frac{\Delta\varepsilon}{2})^2$, and thus there exist constants $\gamma_1 = \gamma_1(\eta,\Delta,\varepsilon) > 0$ and $\gamma_2 = \gamma_2(\eta,\Delta,\varepsilon) > 0$ such that
   $$
   d_T \le \frac{d_0 \eta^\gamma}{\sqrt{n}} + \frac{\Delta d_0^2\gamma_1}{2\eta}\frac{\eta^\gamma}{\sqrt{n}} = \frac{d_0 \eta^\gamma (1 + \frac{\Delta\varepsilon\gamma_1}{2\eta})}{\sqrt{n}},
   $$
   and
   $$
   d_T \ge \frac{d_0 \eta^\gamma}{\sqrt{n}} - \frac{\Delta d_0^2\gamma_2}{2\eta}\frac{\eta^\gamma}{\sqrt{n}} = \frac{d_0 \eta^\gamma (1 - \frac{\Delta\varepsilon\gamma_2}{2\eta})}{\sqrt{n}},
   $$
   as claimed.
\end{proof}

\subsection{Mixing time lower bound}
\label{subsec:lb}

We will use the following 
result for proving the the lower bound on the mixing time as stated in Theorem~\ref{thm:main:intro}.

\begin{lemma}
    \label{lemma:potts:conc}
    If $X \sim \mu$, then $\|\alpha(X)-m\|_{\infty} \le A/{\sqrt{n}}$ with probability at least $3/4$ for a suitable constant $A > 0$.
\end{lemma}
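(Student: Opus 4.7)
The plan is to lift the two contraction lemmas already established, Lemmas~\ref{lemma:constant_distance} and~\ref{lemma:main:contract-over}, to a static concentration statement about $\mu$ by exploiting stationarity. The key observation is that if $X_0 \sim \mu$ and $X_T$ denotes the state of the SW dynamics after $T$ steps, then $X_T \sim \mu$ as well, so any high-probability event about $X_T$ under the dynamical law translates directly to a lower bound on the $\mu$-measure of the same event.

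First I would fix $\varepsilon > 0$ small enough that the hypothesis of Lemma~\ref{lemma:main:contract-over} is satisfied; this has to be the \emph{first} constant chosen, since the constants below will depend on it. With this $\varepsilon$, Lemma~\ref{lemma:constant_distance} applied with confidence $\delta = 7/8$ furnishes a constant $T_0 = O(1)$ such that from \emph{any} starting state, the SW dynamics reaches $\|\alpha(X_{T_0}) - m\|_\infty \le \varepsilon$ with probability at least $7/8$. Averaging this over $X_0 \sim \mu$ and invoking stationarity, so that $X_{T_0} \sim \mu$ as well, yields the coarse estimate
\[
    \mu\bigl(\{X : \|\alpha(X) - m\|_\infty \le \varepsilon\}\bigr) \ge 7/8.
\]

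Next I would sharpen this from $\varepsilon$ down to $A/\sqrt{n}$ using Lemma~\ref{lemma:main:contract-over}. Pick any $\gamma > 0$, set $T = c(\beta,q)\log n + \gamma$, and let $A := C(\gamma, 7/8)$ be the constant supplied by Lemma~\ref{lemma:main:contract-over} at confidence $7/8$. Starting the chain from $X_0 \sim \mu$, the event $\{\|\alpha(X_0) - m\|_\infty \le \varepsilon\}$ holds with probability at least $7/8$ by the previous step, and conditional on any such $X_0$, Lemma~\ref{lemma:main:contract-over} guarantees $\|\alpha(X_T) - m\|_\infty \le A/\sqrt{n}$ with probability at least $7/8$. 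A union bound gives the unconditional lower bound $(7/8)^2 > 3/4$, and since $X_T \sim \mu$ by stationarity this is precisely the claimed concentration of $\mu$. No serious obstacle appears in the argument; the only point requiring care is the order of quantifiers, with $\varepsilon$ being fixed first so that Lemma~\ref{lemma:main:contract-over} applies, which is harmless because Lemma~\ref{lemma:constant_distance} accommodates any constant $\varepsilon > 0$.
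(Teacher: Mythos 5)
Your argument is correct, and it takes a genuinely different route from the paper. You derive the equilibrium concentration dynamically: start the SW chain from $X_0 \sim \mu$, use Lemma~\ref{lemma:constant_distance} (with $\varepsilon$ fixed first, small enough for Lemma~\ref{lemma:main:contract-over}) to land within $\varepsilon$ of $m$ at a constant time, then apply Lemma~\ref{lemma:main:contract-over} conditionally via the Markov property to contract to $A/\sqrt{n}$, and finally invoke stationarity of $\mu$ under the SW dynamics to transfer the high-probability statement about $X_{T_0+T}$ back to $\mu$; the bound $(7/8)^2 > 3/4$ closes the argument. This is not circular: in the paper, Lemma~\ref{lemma:potts:conc} is used only for the mixing-time lower bound and enters nowhere in the proofs of Lemmas~\ref{lemma:constant_distance}--\ref{lemma:f:fp-convergence}. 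The paper instead proves the lemma statically through the random-cluster representation (Appendix~\ref{app:potts:conc}): it first shows, using external results on the mean-field random-cluster measure (a local limit theorem for $L_1(A)$ and second-moment control of $\sum_{i\ge 2}L_i(A)^2$), that $A\sim\pi$ has a giant component of size $\theta n + O(\sqrt{n})$ and small components of total squared size $O(n)$, and then applies Hoeffding to the Edwards--Sokal coloring step. Your route buys economy — no new inputs about $\pi$ and no appendix lemma — at the cost of leaning on the dynamical machinery; the paper's route is independent of the dynamics, so the static concentration is available without any mixing analysis. Two points you should make explicit: you need the constant time in Lemma~\ref{lemma:constant_distance} to be uniform over the starting state and the conclusion to hold at the deterministic time $T_0$ (not merely at some time up to $T_0$), which is exactly the reading under which the paper applies that lemma in its main coupling with $Y_0\sim\mu$; and you need the constant $C(\gamma,\delta)$ of Lemma~\ref{lemma:main:contract-over} to be uniform over all admissible starting states, which it is, so that conditioning on the value of $X_{T_0}$ is legitimate.
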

Similar results are available in the literature, but we were not able to find this particular variant, so, for completeness, we provide a proof of Lemma~\ref{lemma:potts:conc} in Appendix~\ref{app:potts:conc}.

\begin{proof}[Proof of Theorem~\ref{thm:main:intro} (Lower bound)]
    By Lemma~\ref{lemma:potts:conc},
    if $X \sim \mu$, then $\|\alpha(X)-m\|_{\infty} \le A/{\sqrt{n}}$ with probability at least $3/4$ for a suitable constant~$A$.
    Now, take $X_0$ to be a configuration where all vertices are assigned the same spin.
    By~Lemmas~\ref{lemma:main:contract} and~\ref{lemma:f:fp-convergence}, 
    there exists $\gamma$
    such that after $T = c(\beta,q)\log n + \gamma$ steps,
    with probability $\ge 3/4$, we have $\|\alpha(X_T)-m\|_\infty > \frac{A}{\sqrt{n}}$.    
    %; this follows from~Lemma~\ref{lemma:rg:gc} and the machinery from~\cite{BGJ} to transfer results from random graphs to the mean-field random-cluster and Potts models.  
    This implies that $\|X_T - \mu\|_{\textsc{tv}} > 1/4$ and thus that the SW dynamics has not mixed after $T$ steps.
\end{proof}

\section{Spin fraction coalescence in partition}
\label{sec:sf}
    We provide in this section the proof of Lemma~\ref{lemma:partition:convergence}. 
    For this, for $k \in [q]$, let $d_t^{(k)} = |\alpha_1(X_t)-\alpha_{1,k}(X_t)|$ and 
    \[
        \hat{\xi}_{t+1}^{(k)}=\Big|d_{t+1}^{(k)}-\Big(1-\frac{1}{q}\Big)\theta(\beta  \alpha_1(X_{t}))d_{t}^{(k)}\Big|.
    \]
    We will need the following concentration bound for $\hat{\xi}_{t}^{(k)}$.

     \begin{lemma}
        \label{lemma:step:concentration_basket}
        Let $\{V_1,\dots,V_q\}$ be a $\lambda$-partition of $V$ and suppose $X_t$ is such that 
        $\|\alpha(X_t)-m\|_{\infty} \le \varepsilon$ for a sufficiently small $\varepsilon > 0$. Then, there exists $\gamma_1(\beta,q) > 0$ such that for any $r \ge \gamma_1(\beta,q)$ and any $k \in [q]$
        \begin{equation*}
            \Pr\Big(\hat{\xi}_{t+1}^{(k)} \ge \frac{r}{\sqrt{n}}\Big)\le e^{-\Omega(r)}+O(r^{-2}).
        \end{equation*}
    \end{lemma}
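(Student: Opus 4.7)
The plan is to mirror the structure of Lemma~\ref{lemma:step:concentration} but track both $\alpha_1(X_{t+1})$ and $\alpha_{1,k}(X_{t+1})$ simultaneously, so that the individual $O(\sqrt n)$ fluctuations cancel in the difference $\alpha_1(X_{t+1})-\alpha_{1,k}(X_{t+1})$. Since $\|\alpha(X_t)-m\|_\infty\le\varepsilon$ with $\varepsilon$ small, the percolation step from $X_t$ produces exactly one supercritical color class (the dominant one, whose vertex set I call $W$, with $|W|=\alpha_1(X_t)n$ and percolation parameter $\lambda=\beta\alpha_1(X_t)>1$) and $q-1$ subcritical classes. Let $L_1$ be the giant in the percolation on $W$, let $\{C_i\}_{i\ge 2}$ be the remaining components across all color classes, and let $c^*_{t+1}$ be the color assigned to $L_1$ in the coloring step; standard arguments (used already in the paper) show that $c^*_{t+1}$ is the dominant spin in $X_{t+1}$ with probability $1-o(1)$. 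Writing
\[
\alpha_1(X_{t+1})n=|L_1|+M,\qquad \alpha_{1,k}(X_{t+1})|V_k|=|L_1\cap V_k|+M_k,
\]
where $M=\sum_{i\ge2}|C_i|\mathbf{1}[C_i\text{ colored }c^*_{t+1}]$ and $M_k=\sum_{i\ge2}|C_i\cap V_k|\mathbf{1}[C_i\text{ colored }c^*_{t+1}]$, I can decompose
\[
\alpha_1(X_{t+1})-\alpha_{1,k}(X_{t+1})=\Bigl(\tfrac{|L_1|}{n}-\tfrac{|L_1\cap V_k|}{|V_k|}\Bigr)+\Bigl(\tfrac{M}{n}-\tfrac{M_k}{|V_k|}\Bigr).
\]

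For the first bracketed term, Lemma~\ref{lemma:rg:gc} gives $|L_1|=\theta(\lambda)\alpha_1(X_t)n+O(A\sqrt n)$ and Lemma~\ref{lemma:rg:partition} applied to $U=V_k\cap W$ (using $|V_k\cap W|=\alpha_{1,k}(X_t)|V_k|$) gives $|L_1\cap V_k|=\theta(\lambda)\alpha_{1,k}(X_t)|V_k|+O(A\sqrt n)$, each with probability at least $1-Ce^{-cA^2}$. Together these yield
\[
\tfrac{|L_1|}{n}-\tfrac{|L_1\cap V_k|}{|V_k|}=\theta(\beta\alpha_1(X_t))(\alpha_1(X_t)-\alpha_{1,k}(X_t))+O(A/\sqrt n).
\]
For the second bracketed term, I would write it as
\[
D=\sum_{i\ge 2}\mathbf{1}[C_i\text{ colored }c^*_{t+1}]\Bigl(\tfrac{|C_i|}{n}-\tfrac{|C_i\cap V_k|}{|V_k|}\Bigr),
\]
which is a sum of independent bounded terms conditional on the percolation. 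Using $|V_k|\ge\lambda n$, each weight is $O(|C_i|/n)$, so the conditional variance is bounded by $O(n^{-2})\sum_{i\ge 2}|C_i|^2$. Lemma~\ref{lemma:rg:sucep}(2) controls $\sum_{i\ge 2}|C_i|^2=O(n)$ with probability $1-O(r^{-2})$, so Chebyshev gives $|D-\E[D\mid\mathrm{perc}]|\le r/\sqrt n$ with probability $1-O(r^{-2})$. Moreover, $\E[D\mid\mathrm{perc}]=-\tfrac{1}{q}\bigl(\tfrac{|L_1|}{n}-\tfrac{|L_1\cap V_k|}{|V_k|}\bigr)$, which when combined with the previous display equals $-\tfrac{1}{q}\theta(\beta\alpha_1(X_t))(\alpha_1(X_t)-\alpha_{1,k}(X_t))+O(A/\sqrt n)$.

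Adding the two contributions, the $(1-1/q)\theta(\beta\alpha_1(X_t))$ factor emerges exactly:
\[
\alpha_1(X_{t+1})-\alpha_{1,k}(X_{t+1})=\bigl(1-\tfrac{1}{q}\bigr)\theta(\beta\alpha_1(X_t))(\alpha_1(X_t)-\alpha_{1,k}(X_t))+O(r/\sqrt n),
\]
after setting $A\asymp r$ and union-bounding the relevant events. Applying the triangle inequality $\bigl||x|-|y|\bigr|\le|x-y|$ (using $\theta\ge 0$) converts this into a bound on $\hat\xi_{t+1}^{(k)}$ of the desired form $O(r/\sqrt n)$ with probability $1-e^{-\Omega(r)}-O(r^{-2})$. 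The only mildly delicate step is keeping the cancellation between the percolation fluctuations and the coloring step fluctuations honest: individually, $\alpha_1(X_{t+1})$ and $\alpha_{1,k}(X_{t+1})$ each have $\Theta(1/\sqrt n)$ noise at the level of Lemma~\ref{lemma:step:concentration}, but when taking their difference the $O(1/\sqrt n)$ noise coming from the identity of $c^*_{t+1}$ cancels, because $L_1$ (and only $L_1$) contributes deterministically to both expressions, and the small components' contribution is analyzed as the joint random variable $D$ rather than as two separate concentrations.
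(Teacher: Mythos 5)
Your proposal is correct and follows essentially the same route as the paper: one supercritical class, Lemmas~\ref{lemma:rg:gc}/\ref{lemma:rg:partition} for the giant and its intersection with $V_k$, Lemma~\ref{lemma:rg:sucep} plus a Chebyshev/Hoeffding step for the small components' coloring, yielding the contraction factor $(1-\tfrac1q)\theta(\beta\alpha_1(X_t))$ with $O(r/\sqrt n)$ error and failure probability $e^{-\Omega(r)}+O(r^{-2})$. The only (cosmetic) difference is bookkeeping: the paper concentrates $\alpha_{1,k}(X_{t+1})$ around $G(\alpha_1(X_t),\alpha_{1,k}(X_t))$ and reuses Lemma~\ref{lemma:step:concentration} for $\alpha_1(X_{t+1})$ around $F$, then applies the identity $F(x)-G(x,y)=(1-\tfrac1q)\theta(\beta x)(x-y)$, which is exactly the algebra you perform by computing $\E[D\mid\mathrm{perc}]$ after subtracting first; the ``cancellation'' you highlight is not actually needed, since a plain triangle inequality at the $O(r/\sqrt n)$ scale suffices.
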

        
    \begin{proof}
        Since $\beta > q$ and $a > 1/q$ by Fact~\ref{fact:df:fp}, $a \beta > 1$ and $\frac{\beta(1-a)}{q-1} < 1$. By assumption 
        $\|\alpha(X_t)-m\|_{\infty}\le\varepsilon$, so when $\varepsilon$ is small enough, exactly one color class in the percolation step of the SW dynamics is supercritical, and the remaining $q-1$ are subcritical.

        Without loss of generality, let us prove the statement for $V_1$ (i.e., for $k=1$), noting that the same argument applies to any other $V_i$.
        Let $C_1$, $C_2$,... denote the connected components after the percolation step from $X_t$, sorted in decreasing order by size,
        let $R = \sum_{i\ge 2}|C_i|^2$ and $R_1 = \sum_{i\ge 2}|C_i\cap V_1|^2$.
        By~Lemma~\ref{lemma:rg:sucep} there exist constants $\gamma_0,\gamma_1 > 0$ such that $\E[R] = \gamma_0 n + O(1)$ and        
        $\Var[R] \le \gamma_1n$.        
        Then, by Chebyshev's inequality, for any $\gamma_2 \ge 1$ and $n$ large we have
        \begin{equation}
        \Pr\big[R_1 \ge \gamma_0 n+ \gamma_2\sqrt{\gamma_1 n} \big] %\le \Pr\left(R \ge \gamma_0n + \gamma_2\sqrt{\gamma_1 n}\right) 
        \le
        \Pr\big[R \ge \gamma_0n +  {\gamma_2}\sqrt{\gamma_1 n}\big]  
        \le
        \Pr\Big(R \ge \E[R] +  \frac{\gamma_2}{2}\sqrt{\gamma_1 n}\Big)  
        \le \frac{4}{\gamma_2^2}.
        \label{eq:gam2}
        \end{equation}
        
        For $j \ge 2$ and $i \in [q]$,
        consider the random variables $Q_j^{(i)}$ where
        \[
            Q_j^{(i)}= \begin{cases}
            |C_j \cap V_1| & \text{if}~C_j\text{ is colored }i;~\text{and}\\
            0 & \text{otherwise}.\\
            \end{cases}
        \]
        Let $Z_i = \sum_{j \ge 2} Q_j^{(i)}$ be the number of vertices colored $i$ in $V_1$. By Hoeffding's inequality, for any $\gamma_3 \ge 0$:
        \begin{equation*}
        \Pr\Big(|Z_i-E[Z_i]| \ge \gamma_3\sqrt{n} \mid R_1 \le \gamma_0 n + \gamma_2\sqrt{\gamma_1n} \Big)\le 2\exp\Big(\frac{-2 \gamma_3^2}{\gamma_0+\gamma_2\sqrt{\gamma_1/n}}\Big),
        \end{equation*}
        and combined with~\eqref{eq:gam2} we obtain
        $$
        \Pr\big[|Z_i-E[Z_i]| \ge \gamma_3\sqrt{n} \big]\le  2\exp\Big(\frac{-\gamma_3^2}{\gamma_0+\gamma_2\sqrt{\gamma_1/n}}\Big)  +\frac{4}{\gamma_2^2}.
        $$
        We consider next $|C_1\cap V_1|$.
     By Lemma~\ref{lemma:rg:partition}, we have that for suitable constants $c_1,c_2 > 0$ for any $\gamma_4 > 0$ 
        \[
            \Pr\left(\big||C_1\cap V_1|-\theta(\beta \alpha_1(X_t))\alpha_{1,1}(X_t)|V_1|\big|\ge\gamma_4\sqrt{n}\right)\le c_1\exp{(-c_2\gamma_4^2)}.
        \]
        If
        $||C_1\cap V_1|-\theta(\beta \alpha_1(X_t))\alpha_{1,1}(X_t)|V_1|\big|<\gamma_4\sqrt{n}$, and $|Z_j-E[Z_j]|<\gamma_3\sqrt{n}$ for all $j \in [q]$, we have   \begin{align*}
        \alpha_{1,1}(X_{t+1})|V_1| &\le \theta(\beta \alpha_1(X_t))\alpha_{1,1}(X_t)|V_1| + \frac{|V_1| - \theta(\beta  \alpha_1(X_t))\alpha_{1,1}(X_t)|V_1|}{q}+  (\gamma_3+\gamma_4)\sqrt{n} \\&= G(\alpha_1(X_t),\alpha_{1,1}(X_{t}))|V_1|+ (\gamma_3+\gamma_4)\sqrt{n}, 
         \end{align*}
        where the function $G: [1/q,1] \times [1/q,1] \rightarrow  [0,1]$ is defined as
    \[
        G(x,y)=\frac 1q + \theta(\beta x)y\Big(1-\frac 1q\Big).
    \]         
         Similarly, we have that
         $$
        \alpha_{1,1}(X_{t+1})|V_1|  \ge G(\alpha_1(X_t),\alpha_{1,1}(X_{t+1}))|V_1|- (\gamma_3+\gamma_4)\sqrt{n},
         $$
         and for $j \neq 1$
         $$
         \frac{n- \theta(\beta \alpha_1(X_t))\alpha_{1,1}(X_t)|V_1|}{q} - (\gamma_3+\gamma_4)\sqrt{n} \le  \alpha_{j,1}(X_{t+1})|V_1|   \le \frac{ n - \theta(\beta \alpha_1(X_t))\alpha_{1,1}(X_t)|V_1|}{q} +(\gamma_3+\gamma_4)\sqrt{n}.
         $$
        %Therefore, $\alpha_{1,1}(X_{t+1})|V_1| = A_{i1}$ for $n$ large enough. 
          Combining these facts
         and setting $\gamma_2 = \gamma_3 = \gamma_4 = r/4$, it follows that for $r \ge \gamma_0$ and $n$ large enough that 
        \begin{align}
        \Pr\Big(|\alpha_{1,1}(X_{t+1}) - G(\alpha_1(X_t),\alpha_{1,1}(X_{t}))| \ge  \frac{r}{2\sqrt{n}}\Big) &\le
          \Pr\Big(|\alpha_{1,1}(X_{t+1}) - G(\alpha_1(X_t),\alpha_{1,1}(X_{t}))| \ge  \frac{(\gamma_3+\gamma_4)}{\sqrt{n}}\Big) \notag\\
         &\le
         \Pr\Big(|\alpha_{1,1}(X_{t+1}) - G(\alpha_1(X_t),\alpha_{1,1}(X_{t}))| \ge  \frac{(\gamma_3+\gamma_4)\sqrt{n}}{|V_1|}\Big) \notag\\         
         &\le c_1 \exp(-c_2 \gamma_4^2) +  2q\exp\Big(\frac{-\gamma_3^2}{\gamma_0+\gamma_2\sqrt{\gamma_1/n}}\Big)  +\frac{4q}{\gamma_2^2}\notag\\
         &\le e^{-\Omega(r)}+O(r^{-2}). \label{eq:fcb1}
         \end{align}
        By Lemma~\ref{lemma:step:concentration}, for large enough $r$
        \begin{equation}
        \label{eq:fcb2}
        \Pr\Big(|\alpha_1(X_{t+1}) - F(\alpha_1(X_t))| \ge \frac{r}{2\sqrt{n}}\Big)\le e^{-\Omega(r)}+O(r^{-2}).
       \end{equation}
        Since
        \begin{align*}
        d_{t+1}^{(1)} &= |\alpha_1(X_{t+1}) - \alpha_{1,1}(X_{t+1})|,\\
         \hat{\xi}_{t+1}^{(1)}&=\Big|d_{t+1}^{(1)}-\Big(1-\frac{1}{q}\Big)\theta(\beta \alpha_1(X_{t}))d_{t}\Big|,~\text{and}\\
        F(x) - G(x,y) &= \Big(1 - \frac{1}{q}\Big)(x-y) \theta(\beta x),
        \end{align*}
        it follows from~\eqref{eq:fcb1} and~\eqref{eq:fcb2} that
        $$
        \Pr\Big(\hat{\xi}_{t+1}^{(1)} \ge \frac{r}{\sqrt{n}}\Big)\le e^{-\Omega(r)}+O(r^{-2}),
        $$       
        as claimed.
    \end{proof}

    We can now provide the proof of Lemma~\ref{lemma:partition:convergence}.
    
    \begin{proof}[Proof of Lemma~\ref{lemma:partition:convergence}]
    Recall that $d_t^{(k)} = |\alpha_1(X_t)-\alpha_{1,k}(X_t)|$ and 
    \[
        \hat{\xi}_{t}^{(k)}=\Big|d_{t}^{(k)}-\Big(1-\frac{1}{q}\Big)\theta(\beta  \alpha_1(X_{t-1}))d_{t-1}^{(k)}\Big|.
    \]
    Then,
    \begin{align*}
        d_{t}^{(k)}
            &\le    \Big(1-\frac{1}{q}\Big)\theta(\beta \alpha_1(X_{t-1}))d_{t-1}^{(k)} + \hat{\xi}_{t} ^{(k)}
            \le  \Big(1-\frac{1}{q}\Big)d_{t-1}^{(k)} + \hat{\xi}_{t}^{(k)}.  
            %\\
            % &\le (1-\frac{1}{q})\Big((1-\frac{1}{q})|x_{t-1}-y_{t-1}|+\hat{\xi}_{t})\Big)\pm \hat{\xi}_{t+1} \\
            % &\le (1-\frac{1}{q})^2|x_{t-1}-y_{t-1}|+(1-\frac{1}{q})\hat{\xi}_{t}+\hat{\xi}_{t+1} \\
            % &\le (1-\frac{1}{q})^t+\sum_{i=1}^{t+1}(1-\frac{1}{q})^{t+1-i}\hat{\xi}_{i}
    \end{align*}
    Iterating, we deduce that
    $$
    d_t^{(k)} \le \Big(1-\frac{1}{q}\Big)^t + \sum_{i=1}^{t}\Big(1-\frac{1}{q}\Big)^{t-i}\hat{\xi}_{i}^{(k)}.
    $$
    Let $\hat K > 0$, $\hat\rho = 1/q$, and define the sequence
    $
        \hat{r}_i=\hat K\big(1-{\hat\rho}/{2}\big)^{t-i}\big(1+\hat\rho\big)^{t-i}.
   $
    By Lemma~\ref{lemma:step:concentration_basket}, for any $\delta > 0$ there exists $\hat K$ large enough such that
    \begin{equation*}
         \sum_{i=1}^t\Pr\Big(\hat{\xi}_i^{(k)}\ge \frac{\hat{r}_i}{\sqrt{n}}\Big)\le\sum_{i=1}^tO(\hat{r}_i^{-2})\le \frac{\delta}{q}.
    \end{equation*}
    If $\hat{\xi}_i^{(k)} <  \hat{r}_i/\sqrt{n}$ for all $i \le t$, then
    $$
    \sum_{i=1}^{t}(1-\hat\rho)^{t-i}\hat \xi_{i}^{(k)} \le     \frac{\hat K}{\sqrt{n}} \sum_{i=1}^{t}(1-\hat\rho)^{t-i} \big(1-{\hat\rho}/{2}\big)^{t-i}\big(1+\hat\rho\big)^{t-i} \le \frac{C}{2\sqrt{n}},
    $$
    for a suitable constant $C > 0$. A union bound then implies that 
    \begin{equation*}
        \Pr\Big(\sum_{i=1}^{t}\hat \xi_i^{(k)}(1-\hat\rho)^{t-i}\ge \frac{C}{2\sqrt{n}}\Big)
        %\le 1-\Pr(\cap{\xi_i\le r_i\sqrt{n}}\Big) 
        \le \sum_{i=1}^{t} \Pr\Big(\hat \xi_i^{(k)} > \frac{\hat  r_i}{\sqrt{n}}\Big) \le \frac{\delta}{q}.
    \end{equation*}
    Then, when $t \ge \frac{\log n}{2 \log \big( \frac{q}{q-1}\big)} + \frac{\log (2/C)}{\log \big( \frac{q}{q-1}\big)}$, we have that
    $d_t^{(k)} \le C/\sqrt{n}$
    with probability at least $1-\delta/q$, and the result follows by a union bound over $k \in [q]$.
    \end{proof}

\section{Projection chain}
\label{sec:pc}

In this section, we prove Lemma~\ref{lem:basket_coupling} using the following multinomial coupling from~\cite{galanis2019swendsen}. 

\begin{fact}
\label{fact:multinomial}

Fix integers $q\ge2$, $m \ge 0$
and a constant $L>0$. 
Let $X$ and $Y$ be two multinomial random variables with $m$ trials, where each trial leads to success of one of $q$ values with the probability of success for each value being $1/q$; i.e., $X,Y\sim\mathrm{Mult}(m;1/q,\ldots,1/q)$.
Let $w=(w_1,\dots,w_q) \in\mathbb{Z}^q$ such that $\sum_{i=1}^q w_i=0$ and $\|w\|_\infty\le L\sqrt{m}$. 
Then there exists a coupling of $X$ and $Y$ and a constant $c=c(q,L)>0$ independent of $m$ such that
\[
\Pr\big(X=Y+w\big)\ \ge\ c.
\]
\end{fact}

We can now provide the proof of Lemma~\ref{lem:basket_coupling}.

\begin{proof}[Proof of Lemma~\ref{lem:basket_coupling}]
    We couple $A_{t+1}$ and $A_{t+1}'$ as follows. The percolation step is done independently in both copies. In the coloring step, first, the same random color $s$ is assigned to the largest connected component in both chains, 
    and all other connected components with two or more vertices from both chains
    are assigned spins uniformly at random independently.
    In addition, in each $V_i$, if one copy has more isolated vertices than the other, the excess of isolated vertices in that copy are assigned spins independently.
    The coloring of the remaining isolated vertices will be coupled to fix the potential discrepancies created in each $V_i$ by this initial procedure as we described next.
    
    For $i,j\in[q]$, let $\hat{a}_{ij}$, $\hat{a}_{ij}'$ be the number of vertices assigned color $j$ in $V_i$ so far. We claim that there exists a constant $C>0$ such that with probability $\Omega(1)$ it holds for all $i,j\in[q]$ that
    \begin{equation}
        |\hat{a}_{ij}-\hat{a}_{ij}'|\le C\sqrt{|V_i|}.
        \label{eq:4}
    \end{equation}
    To prove this, let $\{C_k\}_{k\ge1}$ be the connected components in the first chain after the percolation step; assume $C_1$ is the largest component. Let $C(v)$ be the connected component of vertex $v$. Note that
    \[
        \sum_{k \ge 2}|C_k\cap V_i|^2=\sum_{k \ge 2}\sum_{v\in C_k\cap V_i}|C_k\cap V_i| = \sum_{k \ge 2}\sum_{v\in C_k\cap V_i}|C(v) \cap V_i| \le \sum_{v\in V_i\setminus C_1}|C(v)|.
    \]
    
   For $v \not\in C_1$,  $E[|C(v)|] \le A_1$ for a suitable constant $A_1 > 0$.
    Hence, taking expectations, it follows
    from Markov's inequality that 
    \[
        \Pr\left(\sum_{k \ge 2}|C_k\cap V_i|^2 \ge 100 q A_1 |V_i|\right) \le \frac{1}{100q}.
    \] 
    Let $\hat{n}_i$ be the number of vertices in $V_i$ excluding those in $C_1$ and the unassigned isolated vertices. 
    If $C_1$ is assigned spin $s$, then
    conditioning on $\sum_{k \ge 2}|C_k\cap V_i|^2 < 100 q A_1 |V_i|$,
    by Hoeffding's inequality, we deduce that for a large enough constant $A_2 > 0$, 
    with probability at least $\frac{99}{100q^2}$, for any $j \in[q]$ with $j \neq s$, it holds that
    \[
        \Big|\hat{a}_{ij}-\frac{\hat{n}_i}{q}\Big| \le A_2 \sqrt{|V_i|}.
    \]
    Similarly, we can get an analogous bound for $\hat{a}_{ij}'$, and by a union bound \eqref{eq:4} holds for all $i,j \in [q]$ such that $j \neq s$ with probability at least $96/100$.

    By Lemma~\ref{lemma:rg:partition} and assumption A2,  
    for a suitable constant $A_3 > 0$, we have for any $i \in [q]$ that
    $$
    \Pr\Big(||C_1 \cap V_i| - |C_1' \cap V_i| | \ge A_3 \sqrt{n} \Big) \le \frac{1}{100q}.
    $$
    Conditioning on this event and on the event that $\sum_{k \ge 2}|C_k\cap V_i|^2 < 100 q A_1 |V_i|$ both for all $i \in [q]$, 
    we obtain via Hoeffding's inequality that for a large enough constant $A_2' > 0$ that for any $i \in [q]$
        \begin{align*}
    \Big|\hat{a}_{is} - |C_1\cap V_i| - \frac{\hat{n}_s}{q}\Big| &\le  (A_2'+A_3) \sqrt{n},~\text{and}\\
    \Big|\hat{a}_{is}' - |C_1'\cap V_i| - \frac{\hat{n}_s}{q}\Big| &\le  (A_2'+A_3) \sqrt{n}
    \end{align*}
    with probability at least $99/(100q)$,
    which implies that \eqref{eq:4} holds for all $i \in [q]$ and $j = s$ with that probability $99/100$ by a union bound.

    Since $\mathcal V$ is a $\lambda$-partition,
    Lemma~\ref{lemma:rg:isolated} and a union bound over the partition sets imply that
    there are at least $\Omega(n)$ unassigned isolated vertices in each $V_i$ in both chains with probability $1-O(n^{-1})$.     
    Observe that the distribution of the coloring of the isolated vertices in each $V_i$ and each copy is a multinomial distribution.
    Then, using Fact~\ref{fact:multinomial}, we can couple the coloring of the isolated vertices in each $V_i$ to equalize the spin counts and obtain two configurations such that $A_{t+1}=A_{t+1}'$ with probability $\Omega(1)$.
\end{proof}

We wrap this section with the proof Lemma of~\ref{lemma:boosting}. The idea behind this proof is that we can show that once A1 and A2 are achieved, they are preserved for a constant number of steps with constant probability, providing more attempts for the coupling from Lemma~\ref{lem:basket_coupling} to succeed at least once. 

To formalize this, let $\mathcal P(r)$ be the set of pairs of configurations of the projection chain $A$ and $A'$ 
such that any full configurations 
$X$ and $X'$ that project 
onto $A$ or $A'$, respectively, satisfy:
$\|\alpha(X) - m\|_\infty \le r /\sqrt{n}$, $\|\alpha(Y) - m\|_\infty \le r /\sqrt{n}$,
$\max_{k \in [q]}|\alpha_{1,k}(X) - \alpha_{1,k}(Y)| \le r /\sqrt{n}$.
%$\max_{k \in [q]} |\alpha_1(Y_t) - \alpha_{1,k}| \le r /\sqrt{n}$.
%$|\alpha_{1,i}(X_T) - \alpha_{1,i}(Y_T)| \le  \hat C / \sqrt{V_i}$ for all $i \in [q]$
\begin{fact}
\label{fact:step:preserv}
    For any $\delta \in (0,1)$, there exists $r = r(\delta) > 0$ such that
    $\Pr(Z_{t+1} \in \mathcal P(r) \mid Z_t \in \mathcal{P}(r)) \ge \delta$.
\end{fact}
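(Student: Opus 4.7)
The plan rests on two strict one-step contractions that are visible in the structure of the drift. First, the scalar map $F$ satisfies $|F'(a)| = \rho < 1$ by Fact~\ref{fact:df:fp}, so displacements of $\alpha_1$ from $a$ are shrunk by a factor $\rho$ in expectation. Second, from the definition of $\hat\xi_{t+1}^{(k)}$ (and $d_{t+1}^{(k)} \le (1-1/q)\theta(\beta \alpha_1(X_t)) d_t^{(k)} + \hat\xi_{t+1}^{(k)}$), the partition-discrepancy coordinates have linearized drift $(1-1/q)\theta(\beta \alpha_1(X_t)) \le 1 - 1/q < 1$. Both contraction factors are bounded below $1$ by constants, and the per-step fluctuations are of order $1/\sqrt{n}$ by Lemmas~\ref{lemma:step:concentration} and~\ref{lemma:step:concentration_basket}. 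So by taking $r$ a sufficiently large constant, the constant gap $(1-\rho)r$ (resp.\ $r/q$) will absorb the fluctuation with probability arbitrarily close to $1$.

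To execute the first piece, condition on $P_t(r)$ and apply Lemma~\ref{lemma:step:concentration} with parameter $s_1 := (1-\rho) r / 2$: with probability $1 - e^{-\Omega(r)} - O(r^{-2})$,
$$
\|\alpha(X_{t+1}) - \kappa(F(\alpha_1(X_t)))\|_\infty \le s_1/\sqrt{n}.
$$
Since $|\alpha_1(X_t) - a| \le r/\sqrt{n}$, Taylor-expand $F$ about $a$ using the uniform bound on $F''$ from Fact~\ref{fact:f:d2} to get $|F(\alpha_1(X_t)) - a| \le \rho r/\sqrt{n} + O(r^2/n)$. Since the map $x \mapsto \kappa(x)$ satisfies $\|\kappa(x) - \kappa(a)\|_\infty = |x-a|$, this yields $\|\kappa(F(\alpha_1(X_t))) - m\|_\infty \le \rho r/\sqrt{n} + O(r^2/n)$. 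The triangle inequality then gives
$$
\|\alpha(X_{t+1}) - m\|_\infty \le \frac{(1+\rho) r}{2\sqrt{n}} + O(r^2/n) \le \frac{r}{\sqrt{n}}
$$
for $n$ sufficiently large.

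For the partition condition, apply Lemma~\ref{lemma:step:concentration_basket} with parameter $s_2 := r/(2q)$ for each $k \in [q]$: with probability $1 - e^{-\Omega(r)} - O(r^{-2})$, $\hat\xi_{t+1}^{(k)} \le s_2/\sqrt{n}$, and therefore
$$
d_{t+1}^{(k)} \le (1-1/q)\,\theta(\beta \alpha_1(X_t))\, d_t^{(k)} + \hat\xi_{t+1}^{(k)} \le \Bigl(1 - \frac{1}{q}\Bigr) \frac{r}{\sqrt{n}} + \frac{r}{2q\sqrt{n}} \le \frac{r}{\sqrt{n}}.
$$
Repeat the same estimates for $Y_t$, and union-bound over the $2 + 2q$ failure events (one majority-vector bound and $q$ partition bounds per chain). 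The total failure probability is $O(q\,r^{-2})$, which is at most $1-\delta$ once $r = r(\delta,q)$ is chosen large enough, proving the fact.

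The main obstacle is ensuring that the split between the contracted drift and the fluctuation slack can be taken as a constant fraction of $r$ independent of $n$, since only then is the failure probability $O(r^{-2})$ rather than something that grows with $n$. This is delivered by the constant-gap structure of the two contractions ($\rho$ and $1/q$ are fixed constants), and the Taylor remainder $O(r^2/n)$ is negligible for constant $r$ and large $n$. A minor subtlety is that the partition drift multiplier depends on $\alpha_1(X_t)$; however, this only enters through the bound $\theta(\beta \alpha_1(X_t)) \le 1$, which is unconditional, so no circularity arises.
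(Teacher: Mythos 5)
Your proof is correct and follows essentially the same route as the paper: condition on $P_t(r)$, exploit the one-step contraction of $F$ near its fixed point $a$ (the paper via monotonicity and the mean value theorem, you via a Taylor expansion with the $F''$ bound — equivalent here) and the contraction factor $(1-1/q)\theta(\beta\alpha_1(X_t)) < 1$ for the partition discrepancies, then absorb the $O(1/\sqrt n)$ fluctuations from Lemmas~\ref{lemma:step:concentration} and~\ref{lemma:step:concentration_basket} into the constant gap and union bound over all coordinates and both chains. No gaps.
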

\begin{proof}
    Let $X_t$ and $Y_t$ be two configurations that project onto $Z_t=Z \in \mathcal{P}(r)$.
    Then,
    $\|\alpha(X_t) - m\|_\infty \le r /\sqrt{n}$.
    Let us assume that $\alpha_1(X_{t}) \ge a$ (the case when $\alpha_1(X_{t}) < a$ is analogous).
    By Fact~\ref{fact:f:inc}, $F(\alpha_1(X_{t})) \le F(a+r/\sqrt{n})$.
    Moreover, by Fact~\ref{fact:df:fp} and the mean value theorem, there exists a constant $\eta \in (0,1)$ such that
    $F(a+r/\sqrt{n}) - a = \eta r/\sqrt{n}$ which implies that $(a+r/\sqrt{n}) - F(a+r/\sqrt{n}) = (1-\eta)r/\sqrt{n}$. In order for $\|\alpha(X_{t+1}) - m\|_\infty \ge r /\sqrt{n}$, we would need 
    $\|\alpha(X_{t+1}) - \kappa(F(\alpha_1(X_{t})))\|_\infty \ge \frac{(1-\eta)r}{\sqrt{n}}$. 
    From Lemma~\ref{lemma:step:concentration}, for any large enough constant $r$, we have
    $$
      %\Pr\Big(\|\alpha(X_{t+1}) - m\|_\infty \ge \frac{r}{\sqrt{n}}\Big) \le 
      \Pr\Big(\|\alpha(X_{t+1})-\kappa(F(\alpha_1(X_{t})))\|_{\infty} \ge \frac{(1-\eta)r}{\sqrt{n}}\Big)\le O(r^{-2}).
     $$
     The same holds for $Y_{t+1}$. 

    Similarly, by assumption, $d_t^{(k)} = |\alpha_1(X_t) - \alpha_{1,k}(X_t)| \le r /\sqrt{n}$.
    Then, using Lemma~\ref{lemma:step:concentration_basket}, for any large enough constant $r$, we have that there exists $\hat\eta \in (0,1)$ such that
    $$
        \Pr\Big(d^{(k)}_{t+1} \ge \frac{r}{\sqrt{n}}\Big) \le
      \Pr\Big(|d^{(k)}_{t+1} - \hat \eta d^{(k)}_{t}| \ge \frac{(1-\hat\eta)r}{\sqrt{n}}\Big) \le O(r^{-2}).
     $$
     The same holds for all $k \in [q]$ by a union bound, and for $Y_{t+1}$. The result then follows by another union bound over the four events.
\end{proof}

\begin{proof}[Proof of Lemma~\ref{lemma:boosting}]
    We consider the coupling that in each step aims to couple two instances of the projection chain using the coupling from Lemma~\ref{lem:basket_coupling}. If it succeeds, the two instances remain coupled in all future steps.
    For ease of notation, let $\mathcal P$ be the set of pairs of configurations of the 
    projection chain 
    that satisfy A1 and A2 for a constant $\hat C$ sufficiently large,
    let $\mathcal S_{t}$ be the event that $A_{t}=A'_{t}$, and let $Z_t = (A_t,A_t')$. 
    Let $\hat{\mathcal P} \subseteq \mathcal P$ be the subset of $\mathcal P$ that contains pairs 
    of distinct configurations:
    i.e., the pairs $(A,A') \in \mathcal P$ such that $A \neq A'$.
    
    It suffices to show that for a suitable constant $\ell$, we have $\Pr\big(\cap_{i=1}^\ell \neg \mathcal S_{t+i}\big) < 1/4$. For this, we obtain first the following recurrence:
    \begin{align}
        &\Pr\big(\cap_{i=1}^\ell \neg \mathcal  S_{t+i}\big) = \Pr\big(\neg \mathcal  S_{t+\ell} \mid \cap_{i=1}^{\ell-1} \neg \mathcal  S_{t+i}\big)\Pr\big(\cap_{i=1}^{\ell-1} \neg \mathcal  S_{t+i}\big) \notag \\
        &\le \sum_{Z \in \mathcal P}\Pr\big(\neg \mathcal S_{t+\ell} , Z_{t+\ell-1}=Z \mid \cap_{i=1}^{\ell-1} \neg \mathcal S_{t+i}\big)\Pr\big(\cap_{i=1}^{\ell-1} \neg \mathcal  S_{t+i}\big) + \Pr(Z_{t+\ell-1} \not\in \mathcal P) \notag\\
        &= \sum_{Z \in \hat{\mathcal P}}\Pr\big(\neg \mathcal S_{t+\ell} , Z_{t+\ell-1}=Z \mid \cap_{i=1}^{\ell-1} \neg \mathcal S_{t+i}\big)\Pr\big(\cap_{i=1}^{\ell-1} \neg \mathcal S_{t+i}\big) + \Pr(Z_{t+\ell-1} \not\in \mathcal P),   \label{eq:final}    
        \end{align}
where the first inequality follows from the law of total probability, and 
    the second equality from the fact that under our coupling the probability of $\neg \mathcal S_{t+\ell}$ is $0$ when $Z \in \mathcal P \setminus \hat{\mathcal P}$.
The right-hand-side of~\eqref{eq:final} satisfies:           \begin{align*}  
         &= \sum_{Z \in \hat{\mathcal P}}\Pr\big(\neg \mathcal S_{t+\ell} \mid \cap_{i=1}^{\ell-1} \neg \mathcal S_{t+i},Z_{t+\ell-1}=Z\big)\Pr(Z_{t+\ell-1}=Z \mid \cap_{i=1}^{\ell-1} \neg \mathcal S_{t+i})\Pr\big(\cap_{i=1}^{\ell-1} \neg \mathcal S_{t+i}\big) + \Pr(Z_{t+\ell-1} \not\in \mathcal P) \\
        &= \sum_{Z \in \hat{\mathcal P}}\Pr\big(\neg \mathcal S_{t+\ell} \mid Z_{t+\ell-1}=Z\big)\Pr(Z_{t+\ell-1}=Z \mid \cap_{i=1}^{\ell-1} \neg \mathcal S_{t+i})\Pr\big(\cap_{i=1}^{\ell-1} \neg \mathcal S_{t+i}\big) + \Pr(Z_{t+\ell-1} \not\in \mathcal P) \\
        &\le (1-\delta)\sum_{Z \in \hat{\mathcal P}}\Pr(Z_{t+\ell-1}=Z \mid \cap_{i=1}^{\ell-1} \neg \mathcal S_{t+i})\Pr\big(\cap_{i=1}^{\ell-1} \neg \mathcal S_{t+i}\big) + \Pr(Z_{t+\ell-1} \not\in \mathcal P) \\
        &\le (1-\delta)\Pr\big(\cap_{i=1}^{\ell-1} \neg \mathcal S_{t+i}\big) + \Pr(Z_{t+\ell-1} \not\in \mathcal P),
    \end{align*}
    where the second equality follows from the Markov property; in particular, note that $\{Z_t\}$ is a Markov chain and $Z \in \hat{\mathcal P}$ is such that $\neg \mathcal S_{t+\ell-1}$ holds and can be dropped from the conditioning. The first inequality follows from Lemma~\ref{lem:basket_coupling}.
    
    Observe also that for $j \in \{1,\dots ,\ell-1\}$, it follows from Fact~\ref{fact:step:preserv} that there is a constant $\delta_2 >0$ that we can choose arbitrarily close to 1 by taking $\hat C$ large enough so that
    \begin{align*}
    \Pr(Z_{t+j} \in \mathcal P) &\ge \sum_{Z \in \mathcal P} \Pr(Z_{t+j} \in \mathcal P \mid Z_{t+j-1}=Z )\Pr( Z_{t+j-1}=Z)\\
    &\ge \sum_{Z \in \mathcal P} \delta_2 \Pr( Z_{t+j-1}=Z) \ge \delta_2 \Pr(Z_{t+j-1} \in \mathcal P),
    \end{align*}
    and iterating we get 
    $
    \Pr(Z_{t+j} \in \mathcal P) \ge \delta_2^j, 
    $
    since by assumption $Z_t \in \mathcal P$.
    Then,
     \begin{align*}
    \Pr\big(\cap_{i=1}^\ell \neg \mathcal S_{t+i}\big)
    &\le (1-\delta)\Pr\big(\cap_{i=1}^{\ell-1} \neg \mathcal S_{t+i}\big) + 1-\delta_2^{\ell-1}.
    \end{align*}    
    Iterating once again,
     \begin{align*}
    \Pr\big(\cap_{i=1}^\ell \neg \mathcal  S_{t+i}\big)
    &\le (1-\delta)^\ell + \sum_{j=1}^{\ell-1} (1-\delta_2^{j})(1-\delta)^{\ell-j-1} \le (1-\delta)^\ell + (1-\delta)^\ell \sum_{j=1}^{\ell-1} \frac{1-\delta_2^{j}}{(1-\delta)^{j+1}}.
    \end{align*}    
    Now first fixing $\ell$ so that $(1-\delta)^\ell < 1/8$, we can then pick $\delta_2$ close enough to $1$ so that $\sum_{j=1}^{\ell-1} \frac{1-\delta_2^{j}}{(1-\delta)^{j+1}} < 1$, and the result follows.
\end{proof}

\section*{Acknowledgments}

This work transpired from a discussion between the first author, Andreas Galanis, Reza Gheissari, Eric Vigoda, and Daniel {\v{S}}tefankovi\v{c}  during the Workshop on Algorithms and Randomness at Georgia Tech in 2018 in which the high-level idea of the proof was outlined. 
    
\bibliographystyle{alpha}
\bibliography{references}

\newcommand{\etalchar}[1]{$^{#1}$}
\begin{thebibliography}{COGG{\etalchar{+}}23}

\bibitem[BCC{\etalchar{+}}22]{BCCPSV}
Antonio Blanca, Pietro Caputo, Zongchen Chen, Daniel Parisi, Daniel {\v S}tefankovi{\v c}, and Eric Vigoda.
\newblock On mixing of markov chains: Coupling, spectral independence, and entropy factorization.
\newblock In {\em Proceedings of the 33rd Annual ACM-SIAM Symposium on Discrete Algorithms (SODA)}, pages 3670--3692, 2022.

\bibitem[BCP{\etalchar{+}}21]{BCPSV}
Antonio Blanca, Pietro Caputo, Daniel Parisi, Alistair Sinclair, and Eric Vigoda.
\newblock Entropy decay in the {S}wendsen {W}ang dynamics on {$\mathbb Z^d$}.
\newblock In {\em Proceedings of the 53rd Annual ACM SIGACT Symposium on Theory of Computing (STOC)}, pages 1551--1564, 2021.

\bibitem[BCSV18]{BCSV}
Antonio Blanca, Pietro Caputo, Alistair Sinclair, and Eric Vigoda.
\newblock Spatial mixing and non-local markov chains.
\newblock In {\em Proceedings of the 29th Annual ACM-SIAM Symposium on Discrete Algorithms (SODA)}, pages 1965--1980, 2018.

\bibitem[BC{\v{S}}V22]{BZSV-SW-trees}
Antonio Blanca, Zongchen Chen, Daniel {\v{S}}tefankovi{\v{c}}, and Eric Vigoda.
\newblock The {S}wendsen--{W}ang dynamics on trees.
\newblock {\em Random Structures \& Algorithms}, 62(4):791--831, 2022.

\bibitem[BCV20]{BCV20}
Antonio Blanca, Zongchen Chen, and Eric Vigoda.
\newblock Swendsen-{W}ang dynamics for general graphs in the tree uniqueness region.
\newblock {\em Random Structures \& Algorithms}, 56(2):373--400, 2020.

\bibitem[BG21]{BG20}
Antonio Blanca and Reza Gheissari.
\newblock Random-cluster dynamics on random regular graphs in tree uniqueness.
\newblock {\em Communications in Mathematical Physics}, 386(2):1243--1287, 2021.

\bibitem[BG23a]{BG23-FOCS-abstract}
A.~Blanca and R.~Gheissari.
\newblock Sampling from the {P}otts model at low temperatures via {S}wendsen--{W}ang dynamics.
\newblock In {\em Proceedings of the 64th IEEE Annual Symposium on Foundations of Computer Science (FOCS)}, pages 2006--2020, 2023.

\bibitem[BG23b]{BG22}
Antonio Blanca and Reza Gheissari.
\newblock Sampling from {P}otts on random graphs of unbounded degree via random-cluster dynamics.
\newblock {\em Annals of Applied Probability}, 33(6B):4997--5049, 2023.
\newblock Extended abstract appeared in the Proceedings of RANDOM 2022.

\bibitem[BGJ96]{BGJ}
B{\'e}la Bollob{\'a}s, Geoffrey Grimmett, and Svante Janson.
\newblock The random-cluster model on the complete graph.
\newblock {\em Probability Theory and Related Fields}, 104:283--317, 1996.

\bibitem[Bla16]{ABthesis}
Antonio Blanca.
\newblock {\em Random-cluster dynamics}.
\newblock University of California, Berkeley, 2016.

\bibitem[BS17]{BSz2}
Antonio Blanca and Alistair Sinclair.
\newblock Random-{C}luster dynamics in {$\mathbb{Z}^2$}.
\newblock {\em Probability Theory and Related Fields}, 168:821--847, 2017.
\newblock Extended abstract appeared in the Proceedings of SODA 2016.

\bibitem[BZ23]{BZ}
Antonio Blanca and Xusheng Zhang.
\newblock {Rapid Mixing of Global Markov Chains via Spectral Independence: The Unbounded Degree Case}.
\newblock In {\em Proceedings of the 27th International Conference on Randomization and Computation (RANDOM)}, page~53, 2023.

\bibitem[CDF{\etalchar{+}}22]{Carlson}
Charlie Carlson, Ewan Davies, Nicolas Fraiman, Alexandra Kolla, Aditya Potukuchi, and Corrine Yap.
\newblock Algorithms for the ferromagnetic {P}otts model on expanders.
\newblock In {\em Proceedings of the 63rd IEEE Annual Symposium on Foundations of Computer Science (FOCS)}, pages 344--355, 2022.

\bibitem[CDFR00]{cooper2000mixing}
Colin Cooper, Martin~E Dyer, Alan~M Frieze, and Rachel Rue.
\newblock Mixing properties of the swendsen-wang process on the complete graph and narrow grids.
\newblock {\em Journal of Mathematical Physics}, 41(3):1499, 2000.

\bibitem[CDK24]{carlson2020efficient}
Charles Carlson, Ewan Davies, and Alexandra Kolla.
\newblock Efficient algorithms for the {P}otts model on small-set expanders.
\newblock {\em Chicago Journal of Theoretical Computer Science}, 1:1–31, 2024.

\bibitem[CDL{\etalchar{+}}12]{cuff2012glauber}
Paul Cuff, Jian Ding, Oren Louidor, Eyal Lubetzky, Yuval Peres, and Allan Sly.
\newblock {Glauber dynamics for the mean-field Potts model}.
\newblock {\em Journal of Statistical Physics}, 149:432--477, 2012.

\bibitem[CF99]{CF}
Colin Cooper and Alan~M. Frieze.
\newblock Mixing properties of the {S}wendsen--{W}ang process on classes of graphs.
\newblock {\em Random Structures and Algorithms}, 15:242--261, 1999.

\bibitem[CGHT16]{Garoni}
Andrea Collevecchio, Timothy~M Garoni, Timothy Hyndman, and Daniel Tokarev.
\newblock The worm process for the ising model is rapidly mixing.
\newblock {\em Journal of Statistical Physics}, 164:1082--1102, 2016.

\bibitem[COGG{\etalchar{+}}23]{PottsRGMetastabilityCMP}
Amin Coja-Oghlan, Andreas Galanis, Leslie~Ann Goldberg, Jean~Bernoulli Ravelomanana, Daniel {\v S}tefankovi{\v c}, and Eric Vigoda.
\newblock Metastability of the {P}otts ferromagnet on random regular graphs.
\newblock {\em Communications in Mathematical Physics}, 401:185–225, 2023.

\bibitem[ES88]{ES}
Robert~G. Edwards and Alan~D. Sokal.
\newblock Generalization of the {F}ortuin-{K}asteleyn-{S}wendsen-{W}ang representation and {M}onte {C}arlo algorithm.
\newblock {\em Physical Review. D. Particles and Fields. Third Series}, 38(6):2009--2012, 1988.

\bibitem[GJ97]{GoJe}
Vivek~K. Gore and Mark~R. Jerrum.
\newblock The {S}wendsen-{W}ang process does not always mix rapidly.
\newblock In {\em Proceedings of the 29th Annual ACM Symposium on Theory of Computing (STOC)}, pages 674--681, 1997.

\bibitem[GJ12]{GoldbergJerrum}
Leslie~Ann Goldberg and Mark Jerrum.
\newblock Approximating the partition function of the ferromagnetic {P}otts model.
\newblock {\em Journal of the ACM}, 59(5):1--31, 2012.

\bibitem[GJ17]{GuoJer}
Heng Guo and Mark Jerrum.
\newblock Random cluster dynamics for the {I}sing model is rapidly mixing.
\newblock In {\em Proceedings of the 29th Annual {ACM-SIAM} Symposium on Discrete Algorithms (SODA)}, pages 1818--1827, 2017.

\bibitem[GLP19]{GLP}
Reza Gheissari, Eyal Lubetzky, and Yuval Peres.
\newblock Exponentially slow mixing in the mean{-}field {S}wendsen{--}{W}ang dynamics.
\newblock {\em Annales de l'Institut Henri Poincare (B)}, 56(1):68--86, 2019.
\newblock Extended abstract appeared in SODA 2018.

\bibitem[G{\v{S}}V19]{galanis2019swendsen}
Andreas Galanis, Daniel {\v{S}}tefankovi{\v{c}}, and Eric Vigoda.
\newblock Swendsen-{W}ang algorithm on the mean-field {P}otts model.
\newblock {\em Random Structures \& Algorithms}, 54(1):82--147, 2019.

\bibitem[G{\v{S}}VY16]{GSVY}
Andreas Galanis, Daniel {\v{S}}tefankovi\v{c}, Eric Vigoda, and Linji Yang.
\newblock Ferromagnetic {P}otts model: {R}efined \#{BIS}-hardness and related results.
\newblock {\em SIAM Journal on Computing}, 45(6):2004--2065, 2016.

\bibitem[Hoe94]{Hoeffding}
Wassily Hoeffding.
\newblock Probability inequalities for sums of bounded random variables.
\newblock {\em The collected works of Wassily Hoeffding}, pages 409--426, 1994.

\bibitem[HPR20]{HPR-PTRF}
Tyler Helmuth, Will Perkins, and Guus Regts.
\newblock Algorithmic {P}irogov--{S}inai theory.
\newblock {\em Probability Theory and Related Fields}, 176(3):851--895, 2020.
\newblock Extended abstract appeared in the Proceedings of STOC 2019.

\bibitem[Hub04]{Huber}
Mark Huber.
\newblock Perfect sampling using bounding chains.
\newblock {\em The Annals of Applied Probability}, 14(2):734--753, 2004.

\bibitem[JKP20]{JKP}
Matthew Jenssen, Peter Keevash, and Will Perkins.
\newblock Algorithms for \#{BIS}-hard problems on expander graphs.
\newblock {\em SIAM Journal on Computing}, 49(4):681--710, 2020.

\bibitem[JL08]{janson2008susceptibility}
Svante Janson and Malwina~J Luczak.
\newblock Susceptibility in subcritical random graphs.
\newblock {\em Journal of Mathematical Physics}, 49(12), 2008.

\bibitem[JS93]{JSIsing}
M.~Jerrum and A.~Sinclair.
\newblock Polynomial-time approximation algorithms for the {I}sing model.
\newblock {\em SIAM Journal on Computing}, 22:1087--1116, 1993.

\bibitem[Kim22]{kim2021cutoff}
Heejune Kim.
\newblock {Cutoff phenomenon of the glauber dynamics for the Ising model on complete multipartite graphs in the high temperature regime}.
\newblock {\em Markov Processes And Related Fields}, 28(1):113--148, 2022.

\bibitem[LL06]{LL}
Malwina Luczak and Tomasz Luczak.
\newblock The phase transition in the cluster-scaled model of a random graph.
\newblock {\em Random Structures \& Algorithms}, 28(2):215--246, 2006.

\bibitem[LLP10]{levin2010glauber}
David~A Levin, Malwina~J Luczak, and Yuval Peres.
\newblock {Glauber dynamics for the mean-field Ising model: cut-off, critical power law, and metastability}.
\newblock {\em Probability Theory and Related Fields}, 146:223--265, 2010.

\bibitem[LNNP14]{LNNP}
Yun Long, Asaf Nachmias, Weiyang Ning, and Yuval Peres.
\newblock A power law of order {$1/4$} for critical mean field {S}wendsen-{W}ang dynamics.
\newblock {\em Memoirs of the American Mathematical Society}, 232(1092), 2014.

\bibitem[LP17]{LPW}
David~A Levin and Yuval Peres.
\newblock {\em Markov chains and mixing times}, volume 107.
\newblock American Mathematical Society, 2017.

\bibitem[LS13]{lubetzky2013cutoff}
Eyal Lubetzky and Allan Sly.
\newblock {Cutoff for the Ising model on the lattice}.
\newblock {\em Inventiones mathematicae}, 191:719--755, 2013.

\bibitem[LS17]{lubetzky2017universality}
Eyal Lubetzky and Allan Sly.
\newblock {Universality of cutoff for the Ising model}.
\newblock {\em Annals of Probability}, 45(6):3664--3696, 2017.

\bibitem[NS19]{NamSly}
Danny Nam and Allan Sly.
\newblock Cutoff for the {S}wendsen-{W}ang dynamics on the lattice.
\newblock {\em Annals of Probability}, 47(6):3705--3761, 2019.

\bibitem[RW99]{RW}
Dana Randall and David Wilson.
\newblock Sampling spin configurations of an {I}sing system.
\newblock In {\em Proceedings of the 10th Annual ACM-SIAM Symposium on Discrete Algorithms (SODA)}, pages 959--960, 1999.

\bibitem[SW87]{SW}
Robert~H. Swendsen and Jian-Sheng Wang.
\newblock Nonuniversal critical dynamics in {M}onte {C}arlo simulations.
\newblock {\em Physical Review Letters}, 58:86--88, 1987.

\bibitem[Ull14]{Ullrich-random-cluster}
Mario Ullrich.
\newblock Swendsen--{W}ang is faster than single-bond dynamics.
\newblock {\em SIAM Journal on Discrete Mathematics}, 28(1):37--48, 2014.

\bibitem[vdHK{\etalchar{+}}21]{van2021glauber}
Remco van~der Hofstad, Takashi Kumagai, et~al.
\newblock {Glauber dynamics for Ising models on random regular graphs: cut-off and metastability}.
\newblock {\em Alea: Latin American journal of probability and mathematical statistics}, 18(1):1441--1482, 2021.

\end{thebibliography}

\appendix

\section{Proof of facts about the drift function $F$}
\label{appendix:f}

We include in this appendix the proofs of Facts~\ref{fact:f:d2} and~\ref{fact:f:d:exp} from Section~\ref{subsec:prelim}.

\begin{proof}[Proof of Fact~\ref{fact:f:d2}]
    From direct calculation, it can be checked that
    \[
        F''(x)
        =-\frac{(q-1)\,\beta\,\theta(\beta x)\,e^{-x\beta\,\theta(\beta x)}
        \bigl(x\beta\bigl(\theta(\beta x)+2e^{-x\beta\,\theta(\beta x)}\bigr)-2\bigr)}
        {q\bigl(1 - x\beta\,e^{-x\beta\,\theta(\beta x)}\bigr)^3}.
    \]
    From \eqref{eq:rg:root}, we have
    $
        x\beta = -\frac{\ln\bigl(1 - \theta(\beta x)\bigr)}{\theta(\beta x)};
    $
    %we can replace $x\beta$ with $-\frac{\ln\bigl(1 - \theta(\beta x)\bigr)}{\theta(\beta x)}$. 
    letting let $y=\theta(\beta x)$, we obtain the analogous function: 
    \[
        G(y) = -\frac{(q - 1)\beta y(1 - y)\left(-\frac{\ln(1 - y)}{y}(2 - y) - 2\right)}{q\left(1 + \frac{\ln(1 - y)}{y}(1 - y)\right)^3}.
    \]
    (Note that when $x\in(0,1]$, we have$y\in(0,1)$.) Taylor expanding $G$ about $0$, we obtain that $G(y)=-\frac{4(q-1)\beta}{3q}+O(y)$,
    and so $\lim_{y \rightarrow 0} G(y) = -\frac{4(q-1)\beta}{3q}$. Moreover,
    $\lim_{y \rightarrow 1^-} G(y) = 0$.
    Since $G$ in continuous in $(0,1)$, and its limit at $0$ and $1$ are bounded, then $G$ must be bounded in $[0,1]$, as claimed.
    % Let $L_0 = -\frac{4(q-1)\beta}{3q}+O(y)$.
    % Define
    % \[
    %     G(y) =
    %     \begin{cases}
    %         F_1(y), & y \in (0,1),\\
    %         L_0,  & y = 0,\\
    %         0,  & y = 1.
    %     \end{cases}
    % \]
    % Then $G$ is continuous on the closed interval $[0,1]$. Therefore by Weierstrass’s theorem, $G$ is bounded on $[0,1]$. Thus $F''(x)$ is bounded in $(0,1)$.
\end{proof}

\begin{proof}[Proof of Fact~\ref{fact:f:d:exp}]
% \antonio{TODO}
    From direct calculation of the derivative of $F$, we deduce from~\eqref{eq:rg:root} that
    \[
        F'(x)=\frac{(q-1)\theta(\beta x)}{q(1-x\beta e^{-x\beta \theta(\beta x)})} = \frac{q-1}{q} \cdot \frac{\theta(\beta x)^2}{\theta(\beta x)+(1-\theta(\beta x))\log(1-\theta(\beta x))}.
    \]
    Then,
    $F'(a)\ge\frac{q-1}q$ when
    $
        %\frac{y}{1-x\beta e^{-x\beta y}}=
        \frac{\theta(\beta x)^2}{\theta(\beta x)+(1-\theta(\beta x))\log(1-\theta(\beta x))}\ge 1
    $
    which follows from the fact that 
    $\log(1-\theta(\beta x)) \le -\theta(\beta x)$
    since $\theta(\beta x)\in(0,1)$.
\end{proof}

\section{Proof of Lemma~\ref{lemma:potts:conc}}
\label{app:potts:conc}

For completeness, we provide in this appendix a proof of Lemma~\ref{lemma:potts:conc}. 
Let $\pi$ be the random-cluster measure on the complete graph $G=(V,E)$ on $n$ vertices with parameters $q$ and $p = 1 - e^{-\beta}$. That is, for $A \subseteq E$, let
$$
\pi(A) =  \frac{1}{Z_{\textsc{rc}}} p^{|A|} (1-p)^{|E\setminus A|}q^{c(V,A)},
$$
where $c(V,A)$ is the number of connected components in the subgraph $(V,A)$ and $Z_{\textsc{rc}}$ is the corresponding partition function.
For $A \subseteq E$,
let $L_i(A)$ denote the size of the $i$-th largest component of $A$ (breaking ties arbitrarily) and let 
$\theta = \frac{aq-1}{q-1}$.

We will require the following result for the random-cluster measure, which can be derived straightforwardly from known results in the random graph and random-cluster literature.

\begin{lemma}
\label{lemma:rc}
    Let $A \sim \pi$. Then, there exist constants $\gamma_0,\gamma_1 > 0$  such that $|L_1(A)-\theta n| \le \gamma_0 \sqrt{n}$ and $\sum_{i\ge 2} L_i(A)^2 \le \gamma_1 n$ with probability at least $0.9$.
\end{lemma}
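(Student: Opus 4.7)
The plan is to derive the lemma from a combination of the Edwards--Sokal coupling, classical mean-field Potts concentration, and the random-graph results of Section~\ref{subsec:prelim}. Jointly sample $(\sigma, A)$ by first drawing $\sigma \sim \mu$ (the $q$-state Potts measure on $K_n$ with parameter $\beta/n$) and then, independently over monochromatic edges of $\sigma$, including each in $A$ with probability $p = 1 - e^{-\beta/n}$; the marginal of $A$ is exactly $\pi$, and conditional on $\sigma$ the edges of $A$ inside each spin class $s$ form an independent Erd\H{o}s--R\'enyi $G(m_s, p)$ with $m_s := |\sigma^{-1}(s)|$. Thus both the largest-component size and the sum of squares of the remaining components of $A$ reduce, given the spin counts, to standard Erd\H{o}s--R\'enyi facts.

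For the spin counts, a classical Curie--Weiss--Potts concentration result gives the following: for any $\delta > 0$ there is a constant $C=C(\delta,\beta,q)$ such that with probability at least $1-\delta$, the histogram $(m_1,\dots,m_q)$ of $\sigma\sim\mu$ (in decreasing order) satisfies $|m_1 - an| \le C\sqrt{n}$ and $|m_j - bn| \le C\sqrt{n}$ for all $j\ge 2$, where $a$ is the fixed point of $F$ in $(1/q,1]$ from Fact~\ref{fact:df:fp} and $b=(1-a)/(q-1)$. This follows from $\mu(\sigma)\propto\binom{n}{m_1,\dots,m_q}\exp(\tfrac{\beta}{n}\sum_s\binom{m_s}{2})$, a saddle-point analysis identifying the magnetized profile as the unique (up to permutation) maximizer of the associated mean-field free-energy for $\beta>\bc$, and a Gaussian local CLT around that maximizer.

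Conditional on $\sigma$ with such a typical histogram, apply the random-graph lemmas class by class. The dominant class has $m_1 p \to a\beta>1$ (supercritical, since $a>1/q$ and $\beta>q$), so Lemma~\ref{lemma:rg:gc} applied to $G(m_1,p)$ gives $|L_1 - \theta(a\beta)m_1| = O(\sqrt{n})$; together with $m_1 = an + O(\sqrt{n})$ and the identity $a\theta(a\beta) = (aq-1)/(q-1) = \theta$, this yields $|L_1(A)-\theta n|\le \gamma_0\sqrt{n}$. Part (2) of Lemma~\ref{lemma:rg:sucep} further bounds the sum of squares of the non-giant components of this class by $O(n)$. For each of the $q-1$ non-dominant classes, $G(m_j,p)$ has parameter $m_j p \to b\beta$; assuming $b\beta<1$ (addressed below), part (1) of Lemma~\ref{lemma:rg:sucep} bounds its total sum of squares by $O(n)$. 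Summing and union-bounding over the $O(1)$ high-probability events gives both conclusions with probability $\ge 0.9$.

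The main obstacle is the strict subcriticality $b\beta<1$, which is not immediate from the fixed-point equation. Setting $\tau := \theta(a\beta) \in (0,1)$, combining $F(a)=a$ with $-\log(1-\tau) = a\beta\tau$ yields, after brief algebra, the clean identity $b\beta = -(1-\tau)\log(1-\tau)/\tau$. The Taylor expansion $-(1-\tau)\log(1-\tau) = \tau - \sum_{k\ge 2}\tau^k/(k(k-1))$ shows $-(1-\tau)\log(1-\tau) < \tau$ for all $\tau\in(0,1)$; since $\beta>q$ and $a>1/q$ force $\tau=\theta(a\beta)>0$, we obtain $b\beta<1$ with a uniform gap. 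The remaining arguments are then a routine combination of this subcriticality estimate, the mean-field Potts concentration, and the two random-graph lemmas.
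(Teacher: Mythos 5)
Your proof is correct in its essentials, but it takes a genuinely different route from the paper. The paper stays entirely on the random-cluster side: it cites RC-specific results (a coarse $O(n^{8/9})$ concentration of $L_1$, an $O(\log n)$ bound on $L_2$, and a local limit theorem for $L_1$ giving the $\sqrt{n}$ scale) and then bounds $\E\big[\sum_{i\ge 2}L_i(A)^2\big]=O(n)$ by a coloring trick plus Markov's inequality. You instead go through the Edwards--Sokal coupling in the Potts-to-RC direction: conditionally on the spin histogram, each class percolates as an independent Erd\H{o}s--R\'enyi graph, so the claim reduces to the paper's own Lemmas~\ref{lemma:rg:gc} and~\ref{lemma:rg:sucep}, with the histogram controlled by the classical Curie--Weiss--Potts $\sqrt{n}$-fluctuation theorem. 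This buys a more self-contained argument (modulo one external input) and, as a bonus, your identity $b\beta=-(1-\tau)\log(1-\tau)/\tau<1$ gives an explicit proof of the strict subcriticality of the minority classes, which the paper asserts without proof in the proof of Lemma~\ref{lemma:step:concentration}. Two caveats. First, your key input --- $O(\sqrt n)$ concentration of the ordered spin counts around $(an,bn,\dots,bn)$ under $\mu$ --- is essentially the statement of Lemma~\ref{lemma:potts:conc}, which in this paper is \emph{deduced from} the present lemma; so you must cite it from independent literature (e.g.\ the Ellis--Wang CLT for the Curie--Weiss--Potts model), not re-derive it via the random-cluster representation, or the argument becomes circular relative to the paper's architecture, and the saddle-point/local-CLT sketch should be replaced by such a citation. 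Second, the identification of the magnetized maximizer's dominant coordinate with the fixed point $a$ of $F$ deserves a line of algebra: writing the mean-field equation $\tfrac{1-x}{(q-1)x}=e^{-\beta\frac{qx-1}{q-1}}$ and substituting $\theta=\tfrac{qx-1}{(q-1)x}$ shows it is exactly $F(x)=x$, so uniqueness in Fact~\ref{fact:df:fp} closes the gap; likewise, applying Lemmas~\ref{lemma:rg:gc} and~\ref{lemma:rg:sucep} with the random parameter $m_1p=a\beta+O(n^{-1/2})$ (rather than a fixed $\lambda$) needs the routine remark that $\theta(\cdot)$ and the constants are locally uniform near $a\beta$ and $b\beta$. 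With those points addressed, your argument is a valid alternative proof.
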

\begin{proof}
    By Lemma 3.2 in~\cite{ABthesis}, $|L_1(A)-\theta n|  = O(n^{8/9})$ with probability $1-O(n^{-1})$.
    Moreover, Lemma 7 in~\cite{cooper2000mixing}
    and Lemma 3.1 in~\cite{BGJ} imply that $L_2(A) = O(\log n)$ with probability $1-O(n^{-1})$.
    Let $\mathcal B$ be the event that $|L_1(A)-\theta n|  = O(n^{8/9})$ and $L_2(A) = O(\log n)$ and observe that by a union bound $\Pr(\mathcal B) \ge 1 - O(n^{-1})$. 
    
    Let $M = \sum_{i\ge 2} L_i(A)^2$.
    Consider the process of coloring the components of $A$ red each with probability $1/q$ independently. Let $R$ correspond to the number of red vertices (excluding the largest component of $A$) and $\hat M = \sum_{i\ge 2} L_i(A)^2 \xi_i$ where the $\xi_i$'s are independent $\mathrm{Bernoulli}(1/q)$ random variables. 
    Observe that $\E[M]/q = \E[\hat M]$. We derive a bound for $\E[\hat M]$ and obtain the probabilistic bound for $M$ via Markov's inequality. Let $J = [\frac{1-\theta}{q}n - \delta n^{8/9}, \frac{1-\theta}{q}n + \delta n^{8/9}]$ for a suitably large constant $\delta > 0$. Then,
    \begin{align*}
        \E[\hat M] &\le n^2 \Pr(R \not\in J) + \sum_{r \in J} \E[\hat M \mid R=r] \Pr(R=r) \\
        &\le n^2(\Pr(R \not\in J \mid \mathcal B) + \Pr(\neg \mathcal B)) + \sum_{r \in J} O(r) \Pr(R=r)  \\
        &\le O(n) + \E[R] = O(n),
    \end{align*}
    where in the second inequality we used Lemma 3.1 from~\cite{BGJ} and Lemma~\ref{lemma:rg:sucep}, 
    and in third that by Hoeffding's inequality $\Pr(R \not\in J \mid \mathcal B) = O(n^{-2})$. Then, $\E[M] = O(n)$ and  by Markov's inequality there exists a large enough constant $\gamma_1 > 0$ such that $\Pr(M \le \gamma_1 n) \ge 0.95$.

    Finally, the local limit theorem in Theorem 14 from~\cite{LL} implies that for a suitable constant $\gamma_0 > 0$, we have
    $|L_1(A)-\theta n| \le \gamma_0 \sqrt{n}$
    with probability  at least $0.95$; the result then follows by a union bound.
\end{proof}

We now use this lemma to prove Lemma~\ref{lemma:potts:conc} using the well-known connection between the random-cluster and the ferromagnetic Potts distributions. In particular, if $A \sim \pi$ and we perform a coloring step of the SW dynamics, the resulting configuration has distribution $\mu$~\cite{ES}.

\begin{proof}[Proof of Lemma~\ref{lemma:potts:conc}]  
    Let $A \sim \pi$ and obtain $X \in \Omega$ by 
    assigning spins uniformly at random from $\{1,\dots,q\}$ to each connected component of $A$ independently; then, $X$ has distribution $\mu$. 
   Let $\mathcal{A}$ be the event hat $|L_1(A) - \theta n| \le \gamma_0\sqrt{n}$ and $\sum_{i\ge 2} L_i(A)^2 \le \gamma_1 n$.
   The largest spin class will be
   the one assigned to the largest component of $A$ and will have expected size $\theta + \frac{1-\theta}{q} = a$; all other colors classes will have expected size  $\frac{1-\theta}{q} = \frac{1-a}{q-1}$. Under the assumption that $\sum_{i\ge 2} L_i(A)^2 \le \gamma_1 n$, we can prove the concentration of $\alpha(X)$ about $m$  via Hoeffding's inequality. In particular, for $\ell > 0$: 
    $$
    \Pr\Big(\|\alpha(X) - m\|_\infty \ge 
    \ell \sqrt{\frac{\gamma_1}{n}} \,\Big|\, \mathcal A
     \Big) \le 2 e^{-2\ell^2}, 
    $$
    and for a suitable $\ell >0$ we can bound this probability by $0.1$. Since by Lemma~\ref{lemma:rc}, $\Pr(\mathcal A) \ge 0.9$, we obtain that
    $$
    \Pr\Big(\|\alpha(X) - m\|_\infty \le    \ell \sqrt{\frac{\gamma_1}{n}}  \Big) \ge 3/4
    $$
    as desired.
\end{proof}

\end{document}